\documentclass[a4paper]{article}
\pdfoutput=1
\usepackage{amsmath}
\usepackage{amsthm}
\usepackage{amssymb}
\usepackage{mathtools}
\usepackage{hyperref}

\newcommand{\Section}[1]{\section{#1}}
\newcommand{\Subsection}[1]{\subsection{#1}}
\newcommand{\RelatedWork}[1]{\paragraph{Related work.}}
\newcommand{\ootpi}{\frac{1}{2\pi i}}
\newcommand{\V}{\mathcal{V}}
\newcommand{\Vbar}{\overline{\V}}
\newcommand{\Vbarbb}{\overline{\V}_{bb}}
\newcommand{\R}{\mathbf{R}}
\newcommand{\C}{\mathbf{C}}

\newcommand{\Z}{\mathbf{Z}}
\newcommand{\D}{\mathbf{D}}

\newcommand{\Hom}{\mathrm{Hom}}
\newcommand{\End}{\mathrm{End}}
\newcommand{\op}{\mathrm{op}}
\newcommand{\blank}{\text{\textendash}}
\newcommand{\after}{\circ}
\newcommand{\barotimes}{\mathbin{\bar{\otimes}}}
\newcommand{\labelledMapMapsto}[5]{%
\begin{align*}%
  #1 : #2 &\longrightarrow #3\\%
       #4 &\longmapsto #5%
\end{align*}%
}
\newcommand{\displayperiod}{\,\text{.}}
\newcommand{\displaycomma}{\,\text{,}}
\newcommand{\MapMapsto}[4]{%
\begin{align*}%
  #1 &\longrightarrow #2\\%
  #3 &\longmapsto #4%
\end{align*}%
}
\DeclareMathOperator{\im}{im}
\DeclareMathOperator{\pr}{pr}
\DeclareMathOperator{\ev}{ev}
\DeclareMathOperator{\res}{res}
\DeclareMathOperator{\id}{id} 
\DeclareMathOperator{\const}{const}
\DeclareMathOperator{\BVS}{BVS}
\DeclareMathOperator{\CBVS}{CBVS}
\DeclareMathOperator{\Hilb}{Hilb}
\newcommand{\vac}{|0\rangle}
\DeclareMathOperator*{\colim}{colim}

\theoremstyle{definition}
\newtheorem{definition}{Definition}[subsection]
\newtheorem{remark}[definition]{Remark}
\newtheorem{example}[definition]{Example}
\newtheorem*{example*}{Example}

\theoremstyle{plain}
\newtheorem{corollary}[definition]{Corollary}
\newtheorem{proposition}[definition]{Proposition}
\newtheorem{lemma}[definition]{Lemma}
\newtheorem{theorem}[definition]{Theorem}
\newtheorem*{theorem*}{Theorem}
\newenvironment{ctikzcd}{%
\begin{center}%
\begin{tikzcd}\ignorespaces}{%
\end{tikzcd}%
\end{center}\ignorespacesafterend}

\newcommand{\link}[1]{\href{#1}{\tt \nolinkurl{#1}}}
\newcommand{\arxiv}[1]{\href{https://arxiv.org/abs/#1}{{\tt \nolinkurl{#1}}}}

\usepackage{tikz}
\usetikzlibrary{positioning}
\usepackage{tikz-cd}

\title{Vertex Algebras and Costello-Gwilliam Factorization Algebras}
\author{Daniel Bruegmann}
\date{\today}
\begin{document}
\maketitle
\begin{abstract}
  Vertex algebras and factorization algebras are two approaches to chiral conformal field theory. 
  Costello and Gwilliam describe how every holomorphic factorization algebra on the plane of complex numbers satisfying certain assumptions gives rise to a Z-graded vertex algebra. 
  They construct some models of chiral conformal theory as factorization algebras. 
  We attach a factorization algebra to every Z-graded vertex algebra.
\end{abstract}
\noindent Vertex algebras and factorization algebras are two approaches to chiral conformal field theory. 
The subject of vertex algebras is well-developed.
Some aspects of it have been reformulated in terms of the geometry of~$\C$, the plane of complex numbers, or other Riemann surfaces. 
Factorization algebras as developed by Costello and Gwilliam~\cite{CostelloGwilliam} are a more general approach to quantum field theory which applies to all kinds of geometries, including higher dimensional manifolds. 
This article compares vertex algebras and factorization algebras on~$\C$. 

Many models of chiral conformal field theory have been constructed as vertex algebras. 
Some of these have been constructed as factorization algebras, too. 
Costello and Gwilliam describe a procedure to obtain vertex algebras from suitable factorization algebras on~$\C$. 
I provide a one-sided inverse to this procedure.
\begin{theorem*}  
  If~$\V$ is a vertex algebra, then there is a factorization algebra~$\mathbf{F}\V$ on~$\C$ whose associated vertex algebra is isomorphic to~$\V$.
\end{theorem*}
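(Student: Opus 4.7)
The plan is to construct $\mathbf{F}\V$ by encoding the vertex operators of $\V$ directly into its structure maps: declare the value on a disk to be a suitable completion of $\V$ (sitting in a Dolbeault-style resolution so that the result is a holomorphic factorization algebra in the sense of Costello--Gwilliam), declare the structure map on a configuration of pairwise disjoint disks nested in a larger disk to be a product of vertex operators evaluated at the relative centers, and extend to arbitrary opens via the Weiss local-to-global axiom. With this design, the recovery statement becomes essentially tautological, since the state--field correspondence has been baked into the structure maps.

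\textbf{Construction on a basis.} For an open disk $D = D_r(z_0) \subset \C$, take $\mathbf{F}\V(D)$ to be a Dolbeault-type cochain complex resolving a topological completion of $\V$ in which the formal series defining $Y(a,z)$ converge on $D$; the completion with respect to the conformal weight filtration is the natural candidate. For pairwise disjoint disks $D_i = D_{r_i}(z_i) \subset D = D_r(z_0)$, define the structure map on the degree-zero part by
\[
  \V^{\otimes n} \longrightarrow \V,\qquad a_1 \otimes \cdots \otimes a_n \longmapsto Y(a_1, z_1-z_0) \cdots Y(a_n, z_n-z_0)\vac\displaycomma
\]
extended $\bar\partial$-equivariantly. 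The locality and associativity axioms of a vertex algebra ensure that this map is well defined, symmetric in its entries, and strictly associative under further nesting. For an arbitrary open $U \subset \C$, define $\mathbf{F}\V(U)$ as the homotopy colimit of this prefactorization structure along Weiss covers of $U$ by disjoint unions of open disks, which gives a Weiss cosheaf automatically.

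\textbf{Recovery.} To prove that the associated vertex algebra is $\V$, invoke the Costello--Gwilliam recovery procedure: its underlying vector space is $\mathbf{F}\V(D_r(0))$, which is $\V$ by construction (after passing to cohomology), and its state--field map is a contour integral of a structure map associated to two disks nested in a bigger disk. By construction this contour integral picks out precisely the operator $Y(a, z)b$, and the isomorphism follows.

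\textbf{Main obstacle.} The hardest part is analytic rather than algebraic: vertex operators are \emph{a priori} formal Laurent series, so making sense of $Y(a,z)b$ at a genuine complex number, and of products of such at distinct points, requires a topology on $\V$ in which the relevant series converge, together with careful bookkeeping of the radii so that regions of convergence match the geometric nesting of disks. A secondary subtlety is matching conventions: one must verify that the Dolbeault normalization and the specific contour integrals used in the Costello--Gwilliam recovery reproduce $Y(a,z)$ on the nose rather than a rescaling or twist, and that the Weiss-colimit extension does not distort the value on a disk. These compatibility checks are likely to absorb most of the technical content of the paper.
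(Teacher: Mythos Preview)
Your approach is genuinely different from the paper's, and as written it hides the main difficulty inside a step you label automatic.

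The paper does not use Dolbeault complexes; $\mathbf{F}\V$ takes values in complete bornological vector spaces, not chain complexes. For \emph{every} open $U\subseteq\C$ (not just disks) it sets
\[
\mathbf{F}\V(U)=\mathbf{E}\V(U)\big/\overline{R^{disc}(U)},\qquad
\mathbf{E}\V(U)=\bigoplus_{m\ge 0}\bigl(\mathcal{O}'(U^m\setminus\Delta)\otimes\V^{\otimes m}\bigr)_{\Sigma_m},
\]
where $\mathcal{O}'$ denotes analytic functionals and $R^{disc}(U)$ is generated by products of a relation on some disc $d\subseteq U$ (an element of $\ker\ev_d$, with $\ev_d([\alpha\otimes a])=\alpha(\mu(a))$) and an arbitrary expression supported in $U\setminus d$. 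Weiss codescent is then \emph{proved}, not enforced by definition: it rests on showing that $\mathcal{O}'$ is a cosheaf for Stein covers (via nuclearity and holomorphic $L^2$-spaces, Proposition~\ref{proposition:AnalyticFunctionalsFormACosheafForSteinCoversOfSteinSubsetsOfCn}) and on Lemma~\ref{lemma:RelationsOnSmallerConcentricDiscAreDenseInRelationsOnDisc}, which says $R(d)$ equals the closure of the image of $R(B_\delta(z_0))$ for any smaller concentric disc --- an analytic-continuation argument using the identity theorem for holomorphic functions valued in a complete bornological space. Example~\ref{example:NoMultiplicationUsingGlobalRelations} shows why one cannot simply quotient by $\ker\ev_U$ globally, which is what motivates the $R^{disc}$ construction.

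The gap in your outline is the sentence ``define $\mathbf{F}\V(U)$ as the homotopy colimit \ldots\ which gives a Weiss cosheaf automatically.'' If that colimit is also used to compute the value on a disk $D$, then your later claim ``$\mathbf{F}\V(D_r(0))$ is $\V$ by construction'' is no longer by construction: you must show that the Weiss colimit over configurations of smaller disks reproduces your originally assigned completion of~$\V$. If instead you keep the hand-assigned value on disks and extend only on non-disks, the result is not automatically a Weiss cosheaf on disks. Either way the ``tautology'' you invoke is exactly the theorem to be proved, and the paper's functional-analytic work (the Stein cosheaf property of $\mathcal{O}'$ together with the density lemma above) is precisely what discharges it.
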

See Theorem~\ref{theorem:MoreDetailedMainTheorem} for a more detailed statement of our main result.
In particular, every vertex algebra arises from a factorization algebra. 
This was known for the universal affine vertex algebras~\cite{CostelloGwilliam} and the Virasoro vertex algebra~\cite{WilliamsVirasoro},
but not, for example, for the simple affine vertex algebras, the irreducible quotients of the universal affine vertex algebras.

This construction of a factorization algebra starting from a vertex algebra was suggested to the author by Andr\'e Henriques, inspired by work of Huang. 
In~\cite{HuangFunctionalAnalyticTheoryI,HuangFunctionalAnalyticTheoryII}, Huang studies locally convex completions of the underlying vector space of a vertex algebra~$\V$.
These locally convex completions are algebras over the~$E_2$-operad of little discs and their multiplication maps are related to the vertex operators of~$\V$.

A factorization algebra~$F$ on~$\C$ assigns a vector space~$F(U)$ to every open subset~$U \subseteq \C$ and extension maps~$F(U) \rightarrow F(V)$ for inclusions~$U\subseteq V$ of open subsets.
These extension maps assemble into a precosheaf.
It is part of the definition of a factorization algebra that this precosheaf is a cosheaf for certain open covers called \emph{Weiss} covers. 
Furthermore, a factorization algebra on~$\C$ has isomorphisms
\[
  F(U) \otimes F(V) \cong F(U \sqcup V) 
\]
for $U,V$ disjoint open subsets of~$\C$.
So far, we have described factorization algebras on~$\C$ with values in the symmetric monoidal category of vector spaces.
We have found it convenient to consider factorization algebras with values in the symmetric monoidal category of complete bornological vector spaces with the symmetric monoidal product given by the completed tensor product of bornological spaces. 

\begin{example*} Let~$\mathfrak{g}$ be a finite-dimensional Lie algebra over~$\C$ and~$\kappa$ a symmetric invariant form on~$\mathfrak{g}$.
Let~$V_{\mathfrak{g},\kappa}$ denote the corresponding universal affine vertex algebra. 
We do not known if~$\mathbf{F}V_{\mathfrak{g},\kappa}$ is isomorphic to the universal affine factorization algebra~$F_{\mathfrak{g}, \kappa}$ from~\cite{CostelloGwilliam}, called the Kac-Moody factorization algebra there and denoted~$\mathcal{F}^\kappa$. 
These factorization algebras take values in the category of chain complexes of differentiable vector spaces. 
However, on open subsets of~$\C$, as opposed to arbitrary Riemann surfaces,~$\mathcal{F}^\kappa$ is concentrated in degree zero by the arguments of~\cite{CostelloGwilliam}. 
We conjecture that~$\mathbf{F}V_{\mathfrak{g},\kappa} \simeq F_{\mathfrak{g},\kappa}$ as prefactorization algebras of differentiable chain complexes, meaning that there is a quasi-isomorphism between these two prefactorization algebras.
Alternatively, we may form the zeroth homology of their construction as a complete bornological space and hope for an isomorphism of factorization algebras with values in the category of bornological spaces. 
\end{example*}

This article is based on the author's thesis which has three parts. 
The first part is available as~\cite{GeometricVertexAlgebras} and not included in this article.
The first and second section of this article are based on the second and third part.  
The introductions of these two sections contain further remarks on other people's related work. 

In~\cite{GeometricVertexAlgebras}, we recall the definitions of~$\Z$-graded vertex algebras and geometric vertex algebras and give a self-contained account of their equivalence. 
This is a simpler analogue of a theorem of Huang which also incorporates infinitesimal conformal symmetries, see~\cite{HuangTwoDimConfGeomAndVOAs}.
Given a~$\Z$-graded vector space, the set of vertex algebra structures on it is in bijection with the set of geometric vertex algebra structures.
Geometric vertex algebras are more similar to factorization algebras than vertex algebras, and our construction of a factorization algebra from a~$\Z$-graded vertex algebra is phrased entirely in terms of its geometric vertex algebra.
A geometric vertex algebra consists of a~$\Z$-graded vector space~$\V = \bigoplus_{k \in \Z} V_k$ together with a sequence of maps~$\V^{\otimes m} \rightarrow \mathcal{O}(\C^m\setminus \Delta; \Vbar)$ for~$m \geq 0$.
All of these maps are called~$\mu$, and their values in~$\Vbar := \prod_{k\in \V} \V_k$ are denoted by
\[
  \mu(a)(z) = \mu(a,z) = \mu(a_1,z_1,\ldots, a_m, z_m)
\]
for~$a \in \V^{\otimes m}$,~$z \in \C^m \setminus \Delta$, and~$a = a_1\otimes \ldots \otimes a_m$.
Here,~$\C^m \setminus \Delta$ is the subset of~$\C^m$ consisting of~$m$-tuples~$(z_1,\ldots,z_m)$ with~$z_i\neq z_j$ for~$i\neq j$, and~$\mathcal{O}(\C^m\setminus \Delta; \Vbar)$ is the set of holomorphic functions in the sense that, after projecting to each component of~$\Vbar$, these take values in a finite-dimensional subspace and are holomorphic. 
The relationship of these~$m$-ary operations to the vertex operators of the corresponding vertex algebra is 
\[
  \mu(a_1,z_1,\ldots,a_m,z_m) = Y(a_1,z_1)\ldots Y(a_m,z_m)\vac
\]
for~$a_1,\ldots, a_n \in \V$ and~$z \in \C^m$ with~$|z_1| > \ldots > |z_m|$.
We refer the reader to~\cite{GeometricVertexAlgebras} for more details. In particular, there we describe properties of a geometric vertex algebra~$(\V,\mu)$ like associativity. 

In Section~\ref{section:MeromorphicPrefactorizationAlgebras}, we define holomorphic prefactorization algebras taking values in the category of complete bornological vector spaces.
To do so, we first summarize various basic facts about complete bornological vector spaces. 
We then describe the constructions~$\mathbf{F}$ and~$\mathbf{V}$ going between geometric vertex algebras and holomorphic prefactorization algebras with discrete weight spaces and meromorphic operator product expansion. 
We check that~$\mathbf{F}\V$ is in fact such a prefactorization algebra and that~$\mathbf{V}\mathbf{F}\V \cong \V$ for every geometric vertex algebra~$\V$.

In Section~\ref{section:FactorizationAlgebras}, we recall the definition of a factorization algebra and prove that~$\mathbf{F}\V$ is a factorization algebra.
\paragraph{Acknowledgments.} 
I would like to thank my advisors Peter Teichner and Andr\'e Henriques for their advice, support, suggestions, and encouragement.
Thank you, Andr\'e, for the very large number of online conversations we had over the last years.
Furthermore, thanks go to
Bertram Arnold,
Christian Blohmann,
Kevin Costello,
Josua Faller,
Vassili Gorbounov,
Owen Gwilliam,
Malte Lackmann,
Jack Kelly,
Achim Krause,
Mathoverflow user user48958,
Mathoverflow user user131781,
Eugene Rabinovich,
Ingo Runkel,
Claudia Scheimbauer,
Jan Steinebrunner,
Christoph Weis, 
Katrin Wendland, 
Jochen Wengenroth,
Brian Williams,
Mahmoud Zeinalian,
and
Tomas Zeman
for helpful conversations or correspondence.
Thanks to Andr\'e Henriques and Eugene Rabinovich for their feedback on a draft. 
Thanks to Prof.\,Stefan Schwede and Prof.\,Michael K\"ohl for agreeing to be on my thesis committee.
During my studies, I visited UC Berkeley and the University of Oxford, and I would like to thank my advisors, these institutions, and their staff for making this possible. 
Most of this work was carried out at the Max Planck Institute for Mathematics in Bonn and financially supported by its IMPRS on Moduli Spaces, and I would like to thank MPIM and its wonderful staff.
I thank my friends and family, and in particular my parents, for their support and encouragement.
\tableofcontents
\Section{Meromorphic Prefactorization Algebras}\label{section:MeromorphicPrefactorizationAlgebras}
A prefactorization algebra~$F$ on a space~$X$ assigns a vector space~$F(U)$ to every open~$U\subseteq X$ and has multiplication maps
\[
F(U) \otimes F(V) \longrightarrow F(W)
\]
for all open subsets~$U\sqcup V \subseteq W \subseteq X$. 
These maps are required to be associative, unital and symmetric in an appropriate sense.
Prefactorization algebras have all of the data of a factorization algebra but not necessarily their local-to-global properties, which we treat in Section~\ref{section:FactorizationAlgebras}.
In this section, we introduce the additional structures and properties of a prefactorization algebra on~$X=\C$ which allow us to obtain a geometric vertex algebra~$\mathbf{V}F$ from it. 
Roughly speaking, the underlying~$\Z$-graded vector space of~$\mathbf{V}F$ consists of the weight spaces for an action of~$\D^\times = \{ z \in \C \mid 0 < |z|< 1\}$ on a small disc around zero, and the multiplication map~$\mu : \mathbf{V}F\otimes \mathbf{V}F \rightarrow \overline{\mathbf{V}F}$ for~$(z,w) \in \C^2 \setminus \Delta$ corresponds to the multiplication map of~$F$ for the inclusion of two disjoint small discs centered at~$z$ and~$w$ into a large disc centered at zero. 
Here, the additional structure of~$F$ being affine-linearly invariant provides some of the isomorphisms between the~$F(d)$ for the different discs~$d \subseteq \C$.
However, we also make use of the maps induced by inclusions to compare the values assigned to concentric discs of different sizes.

The first three subsections contain the relevant definitions and the verification of the axioms of the geometric vertex algebra~$\mathbf{V}F$ associated with a holomorphic prefactorization algebra on~$\C$ with discrete weight spaces and meromorphic operator product expansion (OPE).
In the fourth subsection, we take a geometric vertex algebra~$\V$ and describe~$\mathbf{F}\V$ as a holomorphic affine-linearly invariant prefactorization algebra on~$\C$. 
The fifth subsection establishes that~$\mathbf{F}\V$ has discrete weight spaces and meromorphic operator product expansion, 
and that~$\mathbf{V}\mathbf{F}\V \cong \V$ as geometric vertex algebras. 
\RelatedWork{} 
Huang~\cite{HuangFunctionalAnalyticTheoryI} considers a locally convex completion~$H$ of a finitely generated degreewise finite-dimensional bounded-below~$\Z$-graded vertex algebra~$\V$.
The construction of this locally convex completion uses the standard disc in~$\C$.  
He constructs maps~$H \otimes H \rightarrow H$ for pairs-of-pants and shows that they encode the vertex operators of~$\V$. 
Andr\'e Henriques suggested a variation of this construction for discs to the author, and how to extend it to open subsets of~$\C$. 
Bornological vector spaces appear in the book~\cite{CostelloGwilliam} by Costello and Gwilliam, too. 
There, the bornologies of bornological vector spaces are assumed to arise from a locally convex Hausdorff topology. 
We have split their procedure to obtain vertex algebras from suitable factorization algebras into two steps by making a stop at geometric vertex algebras.
\Subsection{Functional-Analytic Preliminaries}\label{subsection:FunctionalAnalyticPrelim}
Unless otherwise noted, all vector spaces are over~$\C$. 
Recall that a linear map between semi-normed spaces is continuous if and only if it is bounded.
Continuity can be phrased in terms of the collections of open subsets.
Boundedness can be phrased in terms of the collections of bounded subsets: a map is bounded if and only if it sends bounded subsets to bounded subsets.
The collection of bounded subsets of a semi-normed space form a \emph{bornology}.
These two perspectives in terms of topologies and bornologies lead to two generalizations of Banach spaces, namely complete locally convex topological vector spaces and complete convex bornological vector spaces, or complete bornological vector spaces for short.
Our factorization algebras take values in the symmetric monoidal category of complete bornological vector spaces.
A complete bornological vector space may be thought of as ascending union of Banach spaces.
The author has made use of~\cite{Meyer,MeyerBook,HogbeNlend,HogbeNlendMoscatelli,ProsmansSchneiders,CostelloGwilliam} to learn about bornological spaces. 
\begin{definition}
  Let~$X$ be a set. A \emph{bornology}~$\mathcal{B}_X$ on~$X$ is a set of subsets of~$X$ such that
  \begin{itemize}
  \item $A\subseteq B \in \mathcal{B}_X \Rightarrow A \in \mathcal{B}_X$,
  \item $A,B \in \mathcal{B}_X \Rightarrow A \cup B \in \mathcal{B}_X$, and
  \item $x \in X \Rightarrow \{ x\} \in \mathcal{B}_X$.
  \end{itemize}
  We call the elements of~$\mathcal{B}_X$ the \emph{bounded} subsets of~$X$.
  A map~$X \rightarrow Y$ of sets with bornologies~$\mathcal{B}_X$ and~$\mathcal{B}_Y$ is \emph{bounded} if~$f(\mathcal{B}_X) \subseteq \mathcal{B}_Y$.
  Now, let~$X$ be a vector space over~$\C$.
  A subset~$A$ of~$X$ is called \emph{absolutely convex} if~$\lambda x + \mu y \in A$ for all~$x,y\in A$ and~$\lambda, \mu \in \C$ with~$|\lambda| + |\mu| \leq 1$.
  We let~$\langle A \rangle$ denote the absolutely convex hull of~$A$, that is, the smallest absolutely convex subset of~$X$ containing~$A$.
  A \emph{bornological vector space}~$X$ is a vector space~$X$ together with a bornology such that
  \begin{itemize}
    \item $B \in \mathcal{B}_X, \lambda > 0 \Rightarrow \lambda B \in \mathcal{B}_X$ and
    \item $B \in \mathcal{B}_X \Rightarrow \langle B \rangle  \in \mathcal{B}_X$.
  \end{itemize}
\end{definition}
If~$X$ is a bornological vector space, and~$A,B \in \mathcal{B}_X$, then~$A+B \in \mathcal{B}_X$, because $A+B \subseteq \langle 2(A\cup B)\rangle$.
\begin{definition}
  Let~$\BVS$ denote the category of bornological vector spaces and bounded linear maps.
\end{definition}
The category~$\BVS$ is additive but not abelian and has all colimits and limits.
The direct sum~$\bigoplus_{i\in I} X_i$ of a family of bornological vector spaces is the direct sum of the underlying vector spaces together with the following bornology. 
A subset~$B \subseteq \bigoplus_{i\in I} X_i$ is bounded if there exists a family of bounded subsets~$C_i \subseteq X_i$ for~$i\in I$ with~$B \subseteq \bigoplus_{i\in I} C_i$ such that all except finitely many~$C_i$ are zero.
This bornology is the unique bornology turning the direct sum into the coproduct in~$\BVS$. 

If~$X$ is a sub vector space of a bornological vector space~$Y$, then~$Y/X$ is a bornological vector space where a subset is defined to bounded if it is the image of a bounded subset under the quotient map~$q: Y \rightarrow Y/X$:
\[
\mathcal{B}_{Y/X} := \{ C \subseteq Y/X \mid \exists B \in \mathcal{B}_Y : C=q(B) \}
\]
This bornology is called the \emph{quotient bornology}.
If~$f:X \rightarrow Y$ is a bounded linear map, then~$Y/f(X)$ together with the quotient map from~$Y$ to~$X$ is a cokernel of~$f$ in the category of bornological vector spaces.

The product~$\prod_{i\in I} X_i$ in the category of bornological vector spaces has the product of vector spaces as its underlying vector space. 
Its bornology consists of those sets which are contained in a set of the form~$\prod_{i\in I} B_i$ for bounded subsets~$B_i \subseteq X_i$ for~$i\in I$.
A sub vector space~$X$ of a bornological vector space~$Y$ is a bornological vector space by defining subsets to be bounded if they are bounded in~$Y$,
\[
\mathcal{B}_X = \{ A\subseteq X \mid A \in \mathcal{B}_Y  \}\displayperiod
\]
This bornology is called the \emph{subspace bornology}.
If~$f:X \rightarrow Y$ is a bounded linear map, then~$f^{-1}(0)$ with the subspace bornology is a kernel of~$f$ in the category of bornological vector spaces.
Our next goal is the definition of completeness for bornological spaces.
\begin{definition}
  If~$X$ is a semi-normed space, then the set of all subsets of~$X$ bounded w.\,r.\,t.\ its norm defines a bornology on~$X$ and turns~$X$ into a bornological space.
This defines a full embedding of the category of semi-normed spaces and bounded maps into~$\BVS$.
The image of this functor consists of those bornological spaces whose bornology arises from some semi-norm and we call such bornological spaces \emph{semi-normable}.
Similarly, we call such bornological spaces \emph{normable} if their bornology comes from a norm and \emph{completely normable}, or \emph{Banachable}, if their bornology comes from a complete norm.
\end{definition}
Two semi-norms on a vector space are equivalent if and only if their bornologies are equal.
\begin{definition}
 Let~$X$ be a bornological vector space with bornology~$\mathcal{B}_X$.
 A bornological vector space~$Y$ with bornology~$\mathcal{B}_Y$ is a \emph{subobject} of~$X$ if~$Y\subseteq X$ is a sub vector space and~$\mathcal{B}_Y \subseteq \mathcal{B}_X$.
\end{definition}
The inclusion~$Y \subseteq X$ of a subobject is bounded, but~$Y$ does not necessarily carry the subspace bornology inside~$X$.
\begin{definition}
  A bornological space is called \emph{complete}, if every bounded subset is contained in some completely normable subobject and bounded there. 
   A \emph{completion} of~$X$ is an initial object in the full subcategory of~$\BVS_{X/}$ whose objects are maps from~$X$ to a complete bornological vector space.
\end{definition}
This definition of completeness of bornological spaces is easily seen to be equivalent to the one given in~\cite[3:2.1]{HogbeNlend} using completant discs.  
If a map~$f:X \rightarrow \overline{X}$ is a completion of~$X$, we usually omit the map~$f$ and call~$\overline{X}$ a completion of~$X$.
If~$X$ is already complete, then~$\id_X$ is a completion of~$X$.
Completions are unique up to unique isomorphism as initial objects in the category mentioned above.
The category of complete bornological vector spaces has all colimits and limits, which are described further below. 
Completions exist because the completion of a semi-normable space is its usual completion and
\[
  \overline{X} = \colim_{\stackrel{Y\subseteq X}{\text{semi-norm.}}} \overline{Y}
\]
is a completion of~$X$ where the colimit is taken inside the category of complete bornological vector spaces. 

The direct sum of a family of complete bornological vector spaces is complete.
In particular, such a sum is the coproduct in the category of complete bornological vector spaces. 
The analogous statements hold for products.
We now define the topology of sequentially closed sets, or b-closed sets, in order to describe cokernels in the category of complete bornological spaces. 
\begin{definition}
  A sequence~$(x_n)_n$ in a bornological vector space~$X$ \emph{converges} to~$y \in X$ if there is a semi-normable subobject~$Y \subseteq X$ containing~$y$ and all~$x_n$ such that~$(x_n)_n$ converges to~$y$ in~$Y$.
\end{definition}
Convergence of sequences in a semi-normable bornological space~$Y$ is independent of the choice of a semi-norm inducing the bornology of~$Y$.
If~$X$ is bornological vector space, then the set of sequentially closed sets form the closed sets of a topology on~$X$.
These closed sets are called \emph{b-closed}.
Bounded linear maps are continuous w.\,r.\,t.\ this topology.
A subspace of a complete bornological space is b-closed if and only if it is complete, see~\cite[3:2.3 Proposition 1]{HogbeNlend}. 
If~$X$ is a b-closed subspace of a complete bornological space~$Y$, then~$Y/X$ is complete, see~\cite[3:2.3 Proposition 2]{HogbeNlend}.  
Therefore, if~$f:X\rightarrow Y$ is a bounded linear map, then~$Y/\overline{f(X)}$ is a cokernel of~$f$ in the category of complete bornological vector spaces.
The kernel of a bounded linear map~$f:X \rightarrow Y$ between complete spaces in the category of complete bornological vector spaces is again~$f^{-1}(0)$, since bounded linear maps are continuous w.\,r.\,t.\ the topology of b-closed sets and~$\{0\}$ is b-closed in every complete bornological space.
The quotient of a complete bornological space by the action of a finite group is again complete, as can be seen by identifying it with the complete bornological space of invariants.

\begin{definition}\label{definition:Continuity}
  Let~$U\subseteq \R^k$ and~$X$ be a complete bornological space. A function~$f:U \rightarrow X$ is \emph{continuous} if~$\lim _n x_n = x_\infty$ implies~$\lim_n f(x_n) = f(x_\infty)$.
\end{definition}
Continuity in the sense of Definition~\ref{definition:Continuity} is equivalent to the property that every compact~$K\subseteq U$ is mapped into a completely normable subobject~$Y$ of~$X$ for which~$f|_K$ is continuous as a map to~$Y$ in the usual sense.  
\begin{definition}
  Let~$U$ be an open subset of~$\C^n$ and let~$X$ be a complete bornological vector space.
  A map~$f:U \rightarrow X$ of sets is called \emph{holomorphic} if every~$p\in U$ has an open neighborhood~$V$ s.\,t.~$f(V) \subseteq Y$ for some completely normable subobject~$Y \subseteq X$ and the map~$f|_V: V \rightarrow Y$ is holomorphic.
\end{definition}
\begin{definition}
  Let~$X$ be a locally convex topological vector space.
  Let~$X'$ be its continuous \emph{dual}, that is, the vector space of continuous linear functionals on~$X$.
  Let~$B\subseteq X'$ be bounded if and only if there exists a continuous semi-norm~$p$ on~$X$ with~$|\alpha(x)| \leq p(x)$ for all~$\alpha \in B$ and~$x \in X$. This turns~$X'$ into a bornological vector space.
\end{definition}
If~$X$ is a locally convex space, then~$X'$ is always complete as a bornological vector space.
Our main example of a non-discrete bornological vector space is the complete bornological vector space~$\mathcal{O}'(U) := \mathcal{O}(U)'$ of analytic functionals on an open subset~$U\subseteq \C^k$.
Here,~$\mathcal{O}(U)$ is the locally convex space of holomorphic functions on~$U$ whose topology is defined by the supremum semi-norms~$||\blank||_{\infty,K}$ on compact subsets of~$K$.
This means that a subset~$B \subseteq \mathcal{O}'(U)$ is bounded if and only if there exists a compact subset~$K \subseteq U$ and a number~$C >0$ s.\,t.
\[
  |\alpha(f)| \leq C ||f||_K
\]
for all~$\alpha \in B$ and~$f \in \mathcal{O}(U)$.
\begin{definition}
  If~$U \subseteq \C^n$ is open and~$K \subseteq U$ is compact, then~$\mathcal{O}'_K(U)$ is the subset of analytic functionals bounded w.\,r.\,t.~$||\blank||_{\infty,K}$, the supremum semi-norm on~$K$. 
\end{definition}
Note that~$\mathcal{O}'_K(U)$ is a Banach space and a completely normable subobject of~$\mathcal{O}'(U)$.
Another description of the complete bornological space~$\mathcal{O}'(U)$ is
\[
  \mathcal{O}'(U) = \bigcup_{\underset{\text{compact}}{K \subseteq U}} \mathcal{O}'_K(U)
\]
where the bornology on~$\mathcal{O}'(U)$ is the union of the bornologies on the~$\mathcal{O}'_K(U)$, that is, a subset~$B \subseteq \mathcal{O}'(U)$ is bounded if and only if there is a compact~$K \subseteq U$ such that~$B$ is a bounded subset of~$\mathcal{O}'_K(U)$.  
\begin{definition}\label{definition:BornologicalTensorProduct}
Given bornological vector spaces~$X$ and~$Y$, we define the bornological tensor product~$X\otimes Y$ to be the algebraic tensor product~$X\otimes Y$ with the smallest convex bornology for which all sets of the form~$B\otimes C$ for~$B\subseteq X$ and~$C \subseteq Y$ are bounded. 
\end{definition}
\begin{definition}\label{definition:CompletedTensorProduct}
  For bornological vector spaces~$X$ and~$Y$, we define the \emph{completed tensor product}~$X \barotimes Y$ to be the completion of~$X \otimes Y$.  
\end{definition}  
Bornological vector spaces form a symmetric monoidal category~$\BVS$ whose structure maps are obtained from the symmetric monoidal category of vector spaces by forgetting the bornology.
It is clear that these maps are bounded.
They induce the structure of a symmetric monoidal category on the category~$\CBVS$ of complete bornological spaces via the completion functor. 
The completed tensor product commutes with colimits in each variable separately.
We take the liberty to denote the image of~$x \otimes y$ in~$X\barotimes Y$ by~$x \otimes y$, too. 

Every finite-dimensional vector space has a unique bornology turning it into a complete bornological vector space.
If~$X$ is a vector space, then we can turn it into a bornological vector space by declaring a subset to be bounded if it is contained in a finite-dimensional subspace and is bounded there. 
These bornological vector spaces are called \emph{discrete}.
A bornological vector space is discrete if and only if it is a direct sum of one-dimensional bornological vector spaces.
Discrete bornological vector spaces are complete.
Taking the tensor product with a discrete bornological vector space preserves completeness.
One way to see this is to use the fact that the tensor product is left adjoint to the internal hom in bornological spaces, defined next, and thus preserves colimits, and in particular direct sums.
\begin{definition}\label{definition:BornologyOnSetsOfBoundedMaps}
  Let~$Y$ and~$Z$ be bornological vector spaces.
  A subset~$B\subseteq \BVS(Y,Z)$ is bounded if, for every bounded~$C\subseteq Y$ there exists a bounded~$D\subseteq Z$ such that~$f(C) \subseteq D$ for all~$f \in B$.
  The collection of these bounded sets turns~$\BVS(Y,Z)$ into a bornological vector space. 
\end{definition}
Both~$\CBVS$ and~$\BVS$ are closed in the sense of having internal homs adjoint to the tensor product.
If~$Z$ is complete in the above definition, then~$\BVS(Y,Z)$ is complete, so the internal hom in~$\CBVS$ agrees with that in $\BVS$.

Since the passage from factorization algebras on~$\C$ to~$\Z$-graded geometric vertex algebras is via the direct sum of the weight spaces of the value of the factorization algebra on a disc, we now consider~$\Z$-graded complete bornological spaces. 
\begin{definition}
  A~$\Z$-graded complete bornological vector space~$X$ is a complete bornological vector space~$X$ whose underlying vector space
  is equipped with a~$\Z$-grading such that~$X$ is the direct sum of the~$X_k$ for~$k\in \Z$ in the category of complete bornological spaces.
\end{definition}
A~$\Z$-graded complete bornological space~$X$ is the same thing as a~$\Z$-graded vector space each of whose graded components~$X_k$ is additionally equipped with a bornology which turns~$X_k$ into a complete bornological space. 
\begin{proposition}\label{proposition:HolomorphicIfAndOnlyIfComponentsAre}
Let~$(X_i)_{i\in I}$ be a family of complete bornological vector spaces. 
Let~$U\subseteq \C^n$ be an open subset and let~$f : U \rightarrow \prod_{i\in I} X_i$ be a map of sets.
Then~$f$ is holomorphic if and only if each of its components is.
\end{proposition}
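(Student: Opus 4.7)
\emph{Forward direction.} This will be straightforward from boundedness of the projections. Assume $f$ is holomorphic and that $f|_V : V \to Y$ factors through a completely normable subobject $Y \subseteq \prod_{i\in I} X_i$ near a given $p$. The restricted projection $\pi_i|_Y : Y \to X_i$ is then a bounded linear map between complete bornological spaces, and its image, equipped with the quotient norm from $Y/\ker(\pi_i|_Y)$, is a Banach space that includes boundedly into $X_i$. Hence $f_i|_V = \pi_i \circ f|_V$ is holomorphic as a map into a completely normable subobject of $X_i$, proving this direction.

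\emph{Backward direction.} The plan is as follows. Fix $p \in U$ and pick an open polydisc $V$ around $p$ with $\bar V \subseteq U$. For each $i \in I$, compactness of $\bar V$ together with the local holomorphicity of $f_i$ yields a finite open cover of $\bar V$ and finitely many Banach subobjects of $X_i$ through which $f_i$ factors holomorphically on these sets; I will amalgamate them, by taking the image in $X_i$ of their direct sum under the summation map equipped with the quotient norm, into a single Banach subobject $Y_i \subseteq X_i$ together with an open neighborhood of $\bar V$ on which $f_i$ is holomorphic as a $Y_i$-valued map. After rescaling the norm of $Y_i$, I may assume $\|f_i(v)\|_{Y_i} \leq 1$ for all $v \in \bar V$, and crucially this bound is uniform in $i$.

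I will then assemble these into the Banach space $Y := \{(y_i) \in \prod_{i\in I} Y_i \mid \sup_i \|y_i\|_{Y_i} < \infty\}$ with norm $\|(y_i)\|_Y := \sup_i \|y_i\|_{Y_i}$. A ball in $Y$ is contained in a product of bounded sets in the $Y_i$, each of which is bounded in the respective $X_i$, so the inclusion $Y \hookrightarrow \prod_{i\in I} X_i$ is bounded and $Y$ is a completely normable subobject of $\prod_{i\in I} X_i$. The uniform bound above gives $f(V) \subseteq Y$.

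The main obstacle is the remaining step: promoting componentwise holomorphicity into the $X_i$ to holomorphicity of $f|_V : V \to Y$ in the sense of the paper's definition. For this I plan to invoke the Cauchy integral formula on the distinguished boundary of a polydisc slightly smaller than $V$. The uniform bound $\|f_i\|_{Y_i} \leq 1$ on $\bar V$ will yield Cauchy estimates $\|c_{i,\alpha}\|_{Y_i} \leq r^{-|\alpha|}$ for the Taylor coefficients of $f_i$ at $p$, with $r$ independent of $i$. The componentwise-assembled coefficients $c_\alpha := (c_{i,\alpha})_{i\in I}$ will then lie in $Y$ with $\|c_\alpha\|_Y \leq r^{-|\alpha|}$, so the series $\sum_\alpha c_\alpha (z-p)^\alpha$ converges in $Y$ on a neighborhood of $p$ and sums to $f(z)$ componentwise, exhibiting $f$ as holomorphic into $Y$. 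The reason this works precisely where a naive argument fails is that the Cauchy estimates convert the componentwise control into uniform control across the index set $I$, which is exactly what is needed to land in the sup-norm space $Y$.
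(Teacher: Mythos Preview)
Your proof is correct and follows essentially the same strategy as the paper: in both arguments one produces, for each $i$, a Banach subobject $Y_i \subseteq X_i$ into which $f_i$ is holomorphic with a uniform-in-$i$ bound (you normalize via $\sup_{\bar V}\|f_i\|_{Y_i}\le 1$ and then invoke Cauchy estimates, while the paper normalizes directly by rescaling so that $\sum_k p_i(f_{i,k})r^k \le 1$), then forms the sup-norm Banach subobject $Y \subseteq \prod_i X_i$ and shows the Taylor series of $f$ converges there. Your amalgamation step making $Y_i$ work on all of $\bar V$ is a point the paper leaves implicit, and your treatment of the multivariable case via polydiscs and multi-indices is slightly more explicit than the paper's one-variable notation, but the core idea is identical.
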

\begin{proof}
A map~$g$ into a complete bornological space is called holomorphic if it is locally a holomorphic map to some completely normable subobject.
Therefore,~$g$ holomorphically maps the interior of every compact subset~$K\subseteq U$ to some completely normable subobject depending on~$K$. 
Let each component~$f_i$ be holomorphic.
Without loss of generality,~$0 \in U$ and we show that~$f$ is holomorphic on a neighborhood of~$0$. 
Let~$r>0$ with~$K:=\overline{B_r(0)}\subseteq U$. 
For all~$i\in I$, there is a completely normable subobject~$Y_i$ of~$X_i$ such that~$f_i(K) \subseteq Y_i$ and~$f_i$ restricted to interior of~$K$ is holomorphic as a map with target~$Y_i$.
For all~$i \in I$, the Taylor series~$\sum^\infty_{k = 0} f_{i,k} z^k$ of~$f_i$ converges absolutely at~$r$ in~$Y_i$. 
For each~$i \in I$, pick a norm~$p_i$ on~$Y_i$ so that
\[
  \sum^\infty_{k = 0} p_i(f_{i,k}) r^k \leq 1\displayperiod
\]
Such a norm exists because absolute convergence means that there is some norm for which this sum is finite.
Let~$Y$ be the completely normable subobject of~$\prod_i X_i$ consisting of those~$x$ with finite norm~$p(x) := \sup_i p_i(x_i)$.
Let~$f_k \in \prod_i X_i$ be given by~$(f_k)_i = f_{i,k}$. 
Then~$f_k \in Y$ because
\[
  p(f_k) = \sup_i p_i(f_{i,k}) \leq \frac{1}{r^k}
\] 
so
\begin{align*}
  \sum^\infty_{k=0}  p(f_k) |z|^k  \leq \sum^\infty_{k=0}  \frac{|z|^k}{r^k} < \infty 
\end{align*}
for~$|z| < r$, meaning that~$\sum^\infty_{k=0} f_k z^k$ absolutely converges on~$B_r(0)$. 
Since the projection~$\prod_j X_j \rightarrow X_i$ is bounded for all~$i$ and maps~$f_k$ to~$f_{i,k}$, it follows that this sum converges to~$f(z)$. 

Conversely, if~$f$ is holomorphic, then each component of~$f$ is holomorphic, because the projections~$\prod X \rightarrow Y_i$ are bounded and the composition of a holomorphic map with a bounded linear map to another complete bornological space is again holomorphic.  
\end{proof}
\begin{definition}\label{definition:BornologyOnHolomorphicFunctions}
Let~$X$ be a complete bornological space and let~$U\subseteq \C^n$ be open.
A subset~$B \subseteq \mathcal{O}(U;X)$ is bounded if and only if there exists a map~$\beta: \mathcal{P}_c(U) \rightarrow \mathcal{B}_X$ from the set of compact subsets of~$U$ to the set of bounded subsets of~$X$ such that~$f(K) \subseteq \beta(K)$ for all~$f\in B$ and~$K\subseteq U$ compact. 
With these bounded sets~$\mathcal{O}(U;X)$ is a complete bornological space.
\end{definition}
Let~$X$ and~$Y$ be complete bornological spaces. 
For~$U\subseteq \C^m$ open, the pointwise tensor product of holomorphic functions~$f:U\rightarrow X$ and~$g:U \rightarrow Y$ 
is the holomorphic function~$fg(z) = f(z) \otimes g(z) \in X \barotimes Y$ of~$z \in \C^n$.
This defines a bounded linear map
\[
\mathcal{O}(U;X) \otimes \mathcal{O}(U;Y) \longrightarrow \mathcal{O}(U; X\barotimes Y)\displayperiod
\]
Similarly, we have the external product
\(
  f\times g(z,w) = f(z) \otimes g(w) 
\)
of~$(z,w) \in U\times V$ for~$V \subseteq \C^n$ open and~$g:V \rightarrow Y$.
The external product defines a bounded linear map
\[
\mathcal{O}(U;X) \otimes \mathcal{O}(V;Y) \longrightarrow \mathcal{O}(U\times V; X\barotimes Y)\displayperiod
\]
For~$\alpha \in \mathcal{O}'(U)$ and~$\beta \in \mathcal{O}'(V)$, the external product~$\alpha \times \beta \in \mathcal{O}'(U\times V)$ is defined by 
\begin{align}
  \alpha \times \beta(f) = \alpha( z \mapsto \beta( w\mapsto f(z,w)))\displayperiod  \label{equation:ExternalProductOfAnalyticFunctionals}
\end{align}
This defines a bounded linear map 
\[
  \mathcal{O}'(U) \otimes \mathcal{O}'(V) \rightarrow \mathcal{O}'(U \times V)\displaycomma
\]
called the external product of analytic functionals. 
\Subsection{Holomorphic Affine-Linearly Invariant Prefactorization Algebras}
We define prefactorization algebras on a topological space~$X$ with values in a symmetric monoidal category~$\mathcal{C}$. 
For~$X$ a~$G$-space for a group~$G$, there is also the notion of a~$G$-invariant prefactorization algebra on~$X$. 
Being~$G$-invariant is not just a property, but comes with isomorphisms~$F(U) \cong F(gU)$ for~$g\in G$ and~$U\subseteq X$ open. 
If~$G$ is complex-analytic, we also define what it means for a prefactorization algebra with values in~$\CBVS$ to be holomorphic. 
Our case of interest is~$X=\C$ with the group of affine-linear isomorphisms~$\C^\times \ltimes \C$.
\begin{definition}
Let~$X$ be a topological space and~$(\mathcal{C},\otimes, \mathbf{1}_\mathcal{C})$ a symmetric monoidal category.
A (unital) prefactorization algebra on~$X$ with values in~$\mathcal{C}$ consists of
\begin{itemize}
\item an object~$F(U)$ of~$\mathcal{C}$ for every open~$U\subseteq X$,
\item multiplication maps
\[
  M^{U, V}_W : F(U) \otimes F(V) \longrightarrow F(W)
\]
in~$\mathcal{C}$ for all disjoint open subsets~$U,V$ of some open subset~$W\subseteq X$, and
\item unit maps
  \[
    \mathbf{1}_\mathcal{C} \rightarrow F(\emptyset)
  \] 
\end{itemize}
such that:
\begin{itemize}
\item (associativity) For~$W \subseteq X$ open,~$V_1,V_2 \subseteq W$ open, and~$U_1,U_2,U_3 \subseteq X$ open and pairwise disjoint with~$U_1,U_2 \subseteq V_1$,~$U_2,U_3 \subseteq V_2$, and~$U_1$ disjoint from~$V_2$ and~$V_1$ disjoint from~$U_3$, the diagram
\begin{center}
\begin{tikzpicture}
  \node (left) at (-2.3,2) {$(F(U_1) \otimes F(U_2)) \otimes F(U_3)$};
  \node (right) at (2.3,2) {$F(U_1) \otimes (F(U_2) \otimes F(U_3))$};
  \node (UV) at  (342:3){$F(U_1) \otimes F(V_2)$};
  \node (VU) at (198:3)  {$F(V_1) \otimes F(U_3)$};
  \node (W) at (270:3) {$F(W)$};
  \path (left) to node {$\cong$} (right) ;
  \draw [->] (left) to node[left] {$M^{U_1,U_2}_{V_1} \otimes \id$} (VU);
  \draw [->] (right) to node[right] {$\id \otimes M^{U_2,U_3}_{V_2}$} (UV);
  \draw [->] (VU) to node[below left] {$M^{V_1,U_3}_{W}$} (W);
  \draw [->] (UV) to node[below right] {$M^{U_1,V_2}_{W}$} (W);
\end{tikzpicture}
\end{center}
commutes where the unlabeled isomorphism is the associator in~$\mathcal{C}$.
\item (unitality) For~$U \subseteq X$ open, the composition
  \[
    F(U) \cong F(U) \otimes \mathbf{1}_\mathcal{C} \longrightarrow F(U) \otimes F(\emptyset) \longrightarrow F(U)
  \]
  is the identity of~$F(U)$.
\item (compatibility with braiding) For disjoint, open~$U,V \subseteq X$, the braiding~$\beta_{F(U),F(V)}$ of~$F(U)$ with~$F(V)$ in~$\mathcal{C}$ is compatible with multiplication in the sense that
  \[
    M^{V, U}_{U \sqcup V} \after \beta_{F(U),F(V)} = M^{V,U}_{U\sqcup V}\displayperiod
  \]
\end{itemize}
\end{definition}
\begin{remark}\label{remark:PrefactorizationAlgebrasAndMAryOperations}
Let~$F$ be a prefactorization algebra on a topological space~$X$.
If~$U_1, \ldots, U_m \subseteq V$ are pairwise disjoint open subsets of~$X$, we get an~$m$-ary multiplication map
\[
M^{U_1,\ldots,U_m}_V :  F(U_1) \otimes \ldots \otimes F(U_m) \longrightarrow F(V)
\]
subject to associativity as in~\cite[3.1.1]{CostelloGwilliam}. 
The conditions involving the maps for~$m=0$ express unitality.
Recall that a precosheaf on a topological space~$X$ is a functor from the category of opens of~$X$ and inclusions among them to some other category.
For~$m=1$, the maps 
\[
M^U_V :F(U) \rightarrow F(V)
\] 
for~$U \subseteq V$ open are the extension maps of the underlying precosheaf of~$F$, which we also denote by~$F$.
If~$F$ is a precosheaf, we denote its extension maps by~$M^U_V$ for notational compatibility.
\end{remark}
\begin{definition}
Let~$X$ be a topological space with an action of a group~$G$.
The category of open subsets of~$X$ has a left~$G$-action which is compatible with taking disjoint unions.
Therefore~$G$ acts on the category of prefactorization algebras.
A prefactorization algebra is called~\emph{$G$-invariant} if it is equipped with the data of a fixed point for this action. 
This consists of maps
\[
\sigma^U_g : F(U) \longrightarrow F(gU)
\]
for~$g\in G$ and~$U\subseteq X$ open such that
\begin{itemize}
\item~$\sigma^U_{\id} = \id_{F(U)}$ for~$U\subseteq X$ open,
\item~$\sigma^{hU}_g \after \sigma^{U}_h = \sigma^U_{gh}$ for~$g,h \in G$ and~$U\subseteq X$ open,
\item $\sigma^\emptyset_g: F(\emptyset) \rightarrow F(\emptyset)$ is compatible with the unit map for all~$g \in G$
\item $\sigma^W_g \after M^{U,V}_W = M^{gU,gV}_{gW} \after (\sigma^U_g\otimes \sigma^V_g)$ for~$g\in G$ and~$U,V \subseteq W$ open and disjoint and~$W\subseteq X$ open.
\end{itemize} 
A prefactorization algebra on~$\C$ is called \emph{affine-linearly invariant} if it is invariant w.\,r.\,t.\ the group~$G=\C^\times \ltimes \C$ of affine transformations of~$\C$. 
\end{definition}
\begin{remark}\label{remark:InvariantPrefactorizationAlgebrasAndMAryOperations}
Note that the maps~$\sigma^U_g$ are isomorphisms.
The~$m$-ary versions of~$M$ constructed from the unit and the binary~$M$, which are part of a prefactorization algebra~$F$ by definition,
are also fixed by the action of~$G$ if~$F$ is~$G$-invariant:
\[
  \sigma^V_g \after M^{U_1,\ldots,U_m}_V = M^{{gU_1},\ldots,{gU_m}}_{gV} \after (\sigma^{U_1}_g \otimes \ldots \otimes \sigma^{U_m}_g)
\]
for~$U_1\sqcup \ldots\sqcup U_m \subseteq V$ open subsets of~$X$ and~$g \in G$.
\end{remark}
We make analogous definitions for precosheaves instead of prefactorization algebras.
We denote the extension maps of a precosheaf~$F$ by
\[
M^U_V :F(U) \rightarrow F(V)
\] 
to be compatible with the notation for prefactorization algebras.
The extension maps of a~$G$-invariant precosheaf satisfy
\[
\sigma^V_g \after M^U_V = M^{gU}_{gV} \after \sigma^U_g 
\]
for~$g\in G$ and~$U\subseteq V\subseteq X$ open.
\begin{definition}\label{definition:DtimesActionOnValueOnDiscOfInvariantPrecosheaf}
Let~$F$ be a~$\C^\times$-invariant precosheaf on~$\C$ with values in the category of vector spaces. 
For~$r>0$, the semi-group~$\D^\times = \{ z \in \C \mid 0 < |z| < 1\}$ acts on~$B_r(0)$ and hence~$F(B_r(0))$ by
\labelledMapMapsto{\rho}{\D^\times}{\Hom(F(B_r(0)),F(B_r(0)))}{q}{M^{B_{qr}(0)}_{B_r(0)} \after \sigma^{B_r(0)}_{q}\displayperiod}
For a character~$\chi \in \Hom(\C^\times,\C^\times)$, let~$F(B_r(0))_\chi$ be the weight space for~$\chi$, that is, 
\[
F(B_r(0))_\chi = \{ x \in F(B_r(0)) \mid \forall q \in \D^\times : \rho(q)(x) = \chi(q)x \}\displayperiod
\]
\end{definition} 
Let~$F$ be a~$\C^\times$-invariant precosheaf on~$\C$. 
We write~$F(B_r(0))_k$ for the~$k$-th weight space for~$k\in\Z$ which corresponds to~$\chi(q)=q^k$.
The maps
\[
i^r_R := M^{B_r(0)}_{B_R(0)}: F(B_r(0)) \longrightarrow F(B_R(0))
\]
induced by the inclusions for~$0<r\leq R \leq \infty$ are~$\D^\times$-equivariant, so they induce maps
\[
(i^r_R)_\chi : F(B_r(0))_\chi \longrightarrow F(B_R(0))_\chi 
\]
for all~$\chi \in \Hom(\C^\times, \C^\times)$.
\begin{proposition}\label{proposition:InclusionInducesIsomorphismsOnWeightSpaces}
The map~$(i^r_R)_\chi$ is an isomorphism for all characters~$\chi \in \Hom(\C^\times, \C^\times)$ and~$0<r\leq R< \infty$.
\end{proposition}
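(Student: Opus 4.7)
The plan is to exhibit an explicit two-sided inverse built from the dilation isomorphisms. Set $q := r/R$. When $r = R$ we have $q = 1$ and $i^r_R$ is the identity on $F(B_r(0))$, so there is nothing to show; assume henceforth that $r < R$, so that $q \in \D^\times$. Since $q B_R(0) = B_r(0)$, the isomorphism $\sigma^{B_R(0)}_q$ is a bijection $F(B_R(0)) \to F(B_r(0))$, and because $\chi$ takes values in $\C^\times$ the scalar $\chi(q)$ is invertible. I would propose
\[
j := \chi(q)^{-1}\, \sigma^{B_R(0)}_q : F(B_R(0)) \longrightarrow F(B_r(0))
\]
as the inverse of $(i^r_R)_\chi$.

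The first step is to check that $\sigma^{B_R(0)}_q$ sends $F(B_R(0))_\chi$ into $F(B_r(0))_\chi$, so that $j$ restricts appropriately. For $y$ in the first weight space and $q' \in \D^\times$, one unfolds $\rho(q')(\sigma^{B_R(0)}_q y) = M^{B_{q'r}(0)}_{B_r(0)} \after \sigma^{B_r(0)}_{q'} \after \sigma^{B_R(0)}_q(y)$, applies the composition law $\sigma^{B_r(0)}_{q'} \after \sigma^{B_R(0)}_q = \sigma^{B_R(0)}_{q'q}$ for the $\C^\times$-action, uses the precosheaf equivariance $\sigma^V_g \after M^U_V = M^{gU}_{gV} \after \sigma^U_g$ to pull $\sigma^{B_R(0)}_q$ outside, and invokes commutativity of $\C^\times$ to collapse the result to $\chi(q')\, \sigma^{B_R(0)}_q(y)$.

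Next I would verify the two compositions. The identity $i^r_R \after j = \id_{F(B_R(0))_\chi}$ is immediate from the definition of $\rho(q)$ on $F(B_R(0))$ and the weight-space property, giving $i^r_R(\sigma^{B_R(0)}_q x) = \rho(q)(x) = \chi(q)\, x$ for $x \in F(B_R(0))_\chi$. Conversely, for $x \in F(B_r(0))_\chi$, the same equivariance identity applied with $U = B_r(0)$, $V = B_R(0)$, $g = q$ rewrites $\sigma^{B_R(0)}_q \after i^r_R$ as $M^{B_{qr}(0)}_{B_r(0)} \after \sigma^{B_r(0)}_q$, which is by definition $\rho(q)$ on $F(B_r(0))$ and hence multiplies $x$ by $\chi(q)$; dividing by $\chi(q)$ yields $j \after i^r_R = \id$.

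No step poses a genuine obstacle: the argument is essentially the observation that $\C^\times$-invariance of the precosheaf lets one trade the inclusion $B_r(0) \hookrightarrow B_R(0)$ for the dilation by $q = r/R$, up to the invertible scalar $\chi(q)$ prescribed by the weight, and that commutativity of $\C^\times$ is what guarantees the dilation and the defining weight-space action commute on the nose.
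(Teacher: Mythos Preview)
Your proof is correct and follows essentially the same approach as the paper: both set $q=r/R$ and show that $(i^r_R)_\chi$ and $(\sigma^{B_R(0)}_q)_\chi$ are mutually inverse up to the invertible scalar $\chi(q)$, using the defining property of the weight space for one composite and the invariance identity $\sigma^V_g \after M^U_V = M^{gU}_{gV} \after \sigma^U_g$ for the other. The only cosmetic difference is that the paper obtains the second composite by first substituting $R\to r$, $r\to qr$ into the first identity and then invoking invariance, whereas you compute that composite directly; your added remark that commutativity of $\C^\times$ is what makes $\sigma^{B_R(0)}_q$ preserve the $\chi$-weight space is a nice clarification left implicit in the paper.
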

\begin{proof}
Let~$q=r/R$.
The map~$(i^r_R)_\chi$ is inverse to~$(\sigma^{B_R(0)}_q)_\chi$ up to a non-zero scalar factor, namely~$\chi(q)$.
\begin{align}
  (i^r_R)_\chi \after (\sigma^{B_R(0)}_{q})_\chi = \chi(q)\id_{F(B_R(0))_\chi}\label{display:InclusionInducesIsomorphismsOnWeightSpaces}
\end{align}
by the definition of~$F(B_R(0))_\chi$.  
To prove
\[ (\sigma^{B_R(0)}_{q})_\chi \after (i^r_R)_\chi = \chi(q)\id_{F(B_r(0))_\chi} \]
we replace~$R$ with~$qR=r$ and~$r$ with~$qr$ in Equation~(\ref{display:InclusionInducesIsomorphismsOnWeightSpaces}) to get
\[
(i^{qr}_{r})_\chi \after (\sigma^{B_r(0)}_{q})_\chi = \chi(q) \id_{F(B_r(0))}\\
\]
whose l.\,h.\,s.\ equals
\[
(\sigma^{B_R(0)}_q)_\chi \after (i^{r}_{R})_\chi
\]
because
\[
i^{qr}_{r} \after \sigma^{B_r(0)}_q = \sigma^{B_R(0)}_q \after i^{r}_{R}
\]
because~$F$ is an invariant precosheaf.
\end{proof}
\begin{definition}
Let~$F$ be a precosheaf on~$\C$.
Let
\[
F(z) = \lim_{r>0} F(B_r(z))
\]
denote the \emph{costalk} of~$F$ at~$z\in \C$.
\end{definition}
We restrict attention to weights~$\chi \in \Hom(\C^\times, \C^\times)$ given by~$\chi(q) = q^k$ for some~$k\in \Z$ because vertex algebras are~$\Z$-graded.
The~$k$-th weight space of the costalk is
\[
F(z)_k = \left( \lim_{r>0} F(B_r(z)) \right)_k \cong \lim_{r>0} F(B_r(z))_k
\]
because limits commute. We suppress this isomorphism in our notation.
By Proposition~\ref{proposition:InclusionInducesIsomorphismsOnWeightSpaces},
\[
F(z)_k \cong F(B_R(z))_k 
\]
for all~$R>0$.
\begin{definition}\label{definition:HolomorphicallyInvariantPrecosheaves}
  Let~$X$ be a topological space with the action of a complex-analytic group~$G$. 
  For~$U,V \subseteq X$, let
  \[
  \mathcal{D}_{U,V} = \{g \in G \mid gU \subseteq V \}\displayperiod
  \]
  A~$G$-invariant prefactorization algebra~$F$ on~$X$ with values in the symmetric monoidal category of complete bornological spaces is called \emph{holomorphic} if the map 
  \labelledMapMapsto{\rho_{U,V}}{\mathcal{D}_{U,V}}{\mathbf{\BVS}(F(U),F(V))}{g}{M^{gU}_V \after \sigma^U_g}
  is holomorphic on the interior of its domain.
  This condition only depends on the underlying~$G$-invariant precosheaf of~$F$ and we define holomorphic~$G$-invariant precosheaves using this condition. We abbreviate~$\rho_{U,V}$ by~$\rho$.   
\end{definition} 
\begin{proposition}\label{lemma:WeightSpaceProjections}
  Let~$X$ be a complete bornological vector space with a holomorphic representation
  \[
    \rho : \D^\times \longrightarrow \End(X) := \BVS(X,X)
  \]
  of the semi-group~$\D^\times$. 
  For~$k \in \Z$ the map
  \labelledMapMapsto{l_k}{X}{X_k}{x}{ \ootpi \oint z^{-k-1} \rho(z)(x) dz}  
  to the~$k$-th weight space of~$\rho$ is a well-defined bounded linear map and a~$\D^\times$-equivariant splitting of the inclusion.
\end{proposition}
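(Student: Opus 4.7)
The plan is to package the contour integral as a single operator~$T_k \in \End(X)$ rather than an integral depending explicitly on~$x$. Fix~$r \in (0,1)$ and let~$\gamma_r$ denote the positively oriented circle of radius~$r$. By compactness of~$\gamma_r$ together with the holomorphy of~$\rho$, a standard covering-and-summing argument produces an open annulus~$A \supseteq \gamma_r$ inside~$\D^\times$ and a completely normable subobject~$W \subseteq \End(X)$ such that~$\rho|_A : A \to W$ is holomorphic. Since~$z \mapsto z^{-k-1}$ is a holomorphic scalar on~$\D^\times$, the integrand~$z \mapsto z^{-k-1}\rho(z)$ is holomorphic as a~$W$-valued function, so the ordinary Banach-valued contour integral
\[
  T_k := \ootpi \oint_{\gamma_r} z^{-k-1}\rho(z)\,dz
\]
is an element of~$W \subseteq \End(X)$. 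Setting~$l_k(x) := T_k(x)$ then automatically gives a bounded linear map~$X\to X$. Cauchy's theorem applied in a Banach subobject into which~$\rho$ maps holomorphically over a neighborhood of the closed annulus between~$\gamma_r$ and~$\gamma_{r'}$ shows that~$T_k$ is independent of the choice of~$r\in(0,1)$.

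Next I verify that~$l_k$ lands in~$X_k$. For~$w \in \D^\times$, the map~$\rho(w)$ is a bounded linear map on~$X$, hence commutes with the contour integral, and using the semigroup relation~$\rho(w)\rho(z) = \rho(wz)$ together with the substitution~$u = wz$,
\begin{align*}
  \rho(w) T_k = \ootpi \oint_{\gamma_r} z^{-k-1}\rho(wz)\,dz = w^k \cdot \ootpi \oint_{\gamma_{|w|r}} u^{-k-1}\rho(u)\,du = w^k T_k\displaycomma
\end{align*}
where the last equality uses independence of the radius, applicable because~$|w|r \in (0,1)$. Hence~$\rho(w)(l_k(x)) = w^k l_k(x)$ for all~$x\in X$ and~$w\in \D^\times$, which exhibits~$l_k(x)$ as an element of~$X_k$. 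The symmetric computation~$T_k \rho(w) = w^k T_k$ yields the~$\D^\times$-equivariance~$l_k \after \rho(w) = w^k \cdot l_k$.

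Finally, the splitting property is immediate: if~$x \in X_k$, then~$\rho(z)(x) = z^k x$ takes values in the one-dimensional subobject~$\C\cdot x$, so
\[
  l_k(x) = \ootpi \oint_{\gamma_r} z^{-k-1}\cdot z^k x\,dz = x \cdot \ootpi \oint_{\gamma_r} z^{-1}\,dz = x\displayperiod
\]
The main technical step is the opening one, namely assembling a single completely normable subobject~$W \subseteq \End(X)$ into which~$\rho$ maps holomorphically over a neighborhood of the contour; once that is done, the remaining arguments are classical Banach-space computations and Cauchy's theorem.
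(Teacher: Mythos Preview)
Your proposal is correct and follows essentially the same approach as the paper's proof: the contour integral over a circle in~$\D^\times$, the substitution~$u=wz$ for the weight-space and equivariance claims, the residue computation for the splitting, and compactness of the circle together with completeness to land in a single completely normable subobject. The only difference is organizational---you realize the integral as an operator~$T_k \in \End(X)$ first, so boundedness of~$l_k$ is immediate, whereas the paper defines~$l_k$ pointwise and verifies boundedness separately at the end via the same compactness argument you invoke at the start.
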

\begin{proof}
  The contour integral may be taken over any of the circles~$tS^1$ in~$\D^\times$ with~$0<t<1$.
  For~$q \in \D^\times$,
  \begin{align*}
  \rho(q)(l_k(x))&= \ootpi \int_{tS^1} z^{-k-1} \rho(q)(\rho(z)(x)) dz\\
  &= \ootpi \int_{tS^1} z^{-k-1} \rho(qz)(x) dz\\
  &= \ootpi \int_{qtS^1} q^{k+1} z^{-k-1} \rho(z)(x) q^{-1}dz\\
  &= q^k \ootpi \int_{qtS^1} z^{-k-1} \rho(z)(x) dz\\
  &= q^k \ootpi \int_{tS^1} z^{-k-1} \rho(z)(x)dz \quad\text{(deform)}\\ 
  &= q^k l_k(x)\displaycomma
  \end{align*}
  so~$l_k(x) \in X_k$ for~$x \in X$.
  Equivariance uses the commutativity of~$\D^\times$:
  \begin{align*}
  \rho(q)(l_k(x)) &= \ootpi \oint z^{-k-1} \rho(qz)(x) dz\\
  &= \ootpi \oint z^{-k-1} \rho(z)(\rho(q)(x)) dz \\
  &= l_k(\rho(q)(x))
  \end{align*}
  For~$x \in X_k$
  \begin{align*}
  l_k(x) &= \ootpi \oint z^{-k-1} z^kx dz = x \displaycomma
  \end{align*}
  so~$l_k$ is a splitting of the inclusion of~$X_k$ into~$X$.
  
  It remains to prove that~$l_k$ is bounded. 
  Let~$Y \subseteq X$ be a completely normable subobject of~$X$. 
  It suffices to prove that~$l_k|_Y$ is bounded. 
  By the compactness of~$tS^1$ and the holomorphicity of~$\rho$, there is a completely normable subspace~$Z$ of the endomorphisms of~$X$ with~$\rho(tS^1) \subseteq Z$ bounded. 
  If~$B\subseteq Y$ is bounded, then~$\rho(tS^1)(B)$ is bounded inside some completely normable subobject~$Y'$ of~$X$, so the boundedness of~$l_k(B)$ follows, since~$l_k(B)$ is a subset of the closure inside~$Y'$ of the convex hull of~$\rho(tS^1)(B)$.
\end{proof}
Let~$F$ be a holomorphic~$\C^\times$-invariant precosheaf on~$\C$.
For~$r >0$ and~$k\in \Z$, let
\[
l^r_k : F(B_r(0)) \longrightarrow F(B_r(0))_k
\]
denote the weight space projection~$l_k$ for the~$\D^\times$-representation~$F(B_r(0))$.
The costalk~$F(0)$ is a holomorphic~$\D^\times$-representation, too, as the limit of holomorphic representations.
For~$r > 0$, let
\[
\pi_r : F(0) \rightarrow F(B_r(0))
\]
denote the projection to the~$r$-component of the costalk of~$F$ at zero, that is, one of maps exhibiting~$F(0)$ as the limit of the~$F(B_r(0))$.
Of course,~$\pi_r$ is equivariant by construction and hence compatible with the weight space projections, i.\,e.
\begin{ctikzcd}
  F(0) \rar{l_k} \dar{\pi_r} & F(0)_k \dar{\pi_r} \\
  F(B_r(0)) \rar{l^r_k} & F(B_r(0))_k
\end{ctikzcd}
commutes.
It can be shown that the~$\D^\times$-representation on~$F(0)$ extends to a holomorphic~$\C^\times$-representation and its weight space projections~$l_k$ sum to the identity, $k \in \Z$. 
However, to prove associativity of the geometric vertex algebra~$\mathbf{V}F$ associated with certain holomorphic prefactorization algebras~$F$ as in the next subsection, we need to sum up all weight space projections for~$F$ evaluated on a disc of medium size after including into a slightly larger disc.
\begin{proposition}\label{proposition:SumOfWeightSpaceProjections}
  Let~$F$ be a holomorphic~$\C^\times$-invariant precosheaf on~$\C$.
  For~$R>r>0$, we have
  \[
  i^r_R = \sum_{k\in \Z} i^r_R \after l^r_k
  \]
  in the space~$\BVS(F(B_r(0)),F(B_R(0)))$ with absolute convergence in some completely normable subobject. 
\end{proposition}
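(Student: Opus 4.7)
The plan is to realize the displayed sum as the Laurent expansion at $q=0$ of a holomorphic extension of $\rho$ to a larger punctured disc, evaluated at $q=1$. Concretely, for $q$ in the open punctured disc $\mathcal{D} := \{0 < |q| < R/r\}$ I would set
\[
  \tilde\rho(q) := M^{qB_r(0)}_{B_R(0)} \after \sigma^{B_r(0)}_q \in \BVS(F(B_r(0)),F(B_R(0))).
\]
Since $qB_r(0) = B_{|q|r}(0) \subseteq B_R(0)$ on $\mathcal{D}$, the holomorphic $\C^\times$-invariance of $F$ (Definition~\ref{definition:HolomorphicallyInvariantPrecosheaves}) makes $\tilde\rho$ holomorphic on $\mathcal{D}$. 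Two further observations pin it down: for $q \in \D^\times$ the precosheaf relation $M^{qB_r(0)}_{B_R(0)} = i^r_R \after M^{qB_r(0)}_{B_r(0)}$ yields $\tilde\rho(q) = i^r_R \after \rho(q)$, and in particular $\tilde\rho(1) = i^r_R$.

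Next, I would pick a closed annulus $A = \{\delta \leq |q| \leq \eta\}$ with $0 < \delta < 1 < \eta < R/r$. The image of $A$ under the continuous map $\tilde\rho$ is contained in some completely normable subobject $Z$ of $\BVS(F(B_r(0)),F(B_R(0)))$, and $\tilde\rho$ is holomorphic as a $Z$-valued function on a neighborhood of $A$. This passage from the local Banach-valued holomorphicity supplied by the definition to a \emph{single} Banach subspace on a compact annulus is the step I view as most delicate; it proceeds by covering $A$ by finitely many neighborhoods on which the definition of holomorphicity produces Banach subspaces and then taking the sum of those subspaces inside the hom space. Standard Banach-valued Laurent theory then yields an absolutely convergent expansion
\[
  \tilde\rho(q) = \sum_{k \in \Z} q^k T_k \quad \text{in } Z,
\]
valid on the open annulus $\delta < |q| < \eta$, with coefficients $T_k = \ootpi \oint_{tS^1} z^{-k-1} \tilde\rho(z)\,dz \in Z$ for any $\delta < t < \eta$.

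Finally, choosing $\delta < t < 1$ places the contour inside $\D^\times$, where $\tilde\rho(z) = i^r_R \after \rho(z)$; since $i^r_R$ is bounded it commutes with the contour integral, and comparing with the formula in Proposition~\ref{lemma:WeightSpaceProjections} identifies $T_k = i^r_R \after l^r_k$. Specializing the Laurent series at $q = 1 \in A$ then gives the desired identity $i^r_R = \sum_{k \in \Z} i^r_R \after l^r_k$ with absolute convergence in the completely normable subobject $Z$.
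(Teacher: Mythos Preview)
Your proof is correct and follows essentially the same route as the paper: define the holomorphic map $q \mapsto M^{qB_r(0)}_{B_R(0)} \circ \sigma^{B_r(0)}_q$ on the punctured disc of radius $R/r$, identify its Laurent coefficients as $i^r_R \circ l^r_k$ via the integral formula from Proposition~\ref{lemma:WeightSpaceProjections}, and evaluate at $q=1$. The paper treats the passage to a single completely normable subobject on a compact annulus as a general fact about holomorphic functions into complete bornological spaces, whereas you spell out the finite-cover argument; otherwise the arguments coincide.
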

\begin{proof}
  Let~$A = B_{R/r}(0) - 0$.
  The function 
  \labelledMapMapsto{\rho}{A}{\BVS(F(B_r(0)),F(B_R(0)))}{q}{M^{B_{qr}(0)}_{B_R(0)} \after \sigma^{B_r(0)}_q = i^{qr}_R \after \sigma^{B_r(0)}_q}
  is holomorphic by assumption.  
The Laurent expansion of~$\rho$ can be computed on~$\D^\times$ and is ($0<t<1$)
  \[
    \rho(q) = \sum_{k\in \Z} \ootpi \int_{tS^1} z^{-k-1} \rho(z) dz q^k = \sum_{k\in \Z} (i^r_R \after l^r_k) q^k
  \]
  because
  \begin{align*}
    \ootpi \int_{tS^1} z^{-k-1} \rho(z) dz  &= \ootpi \int_{tS^1} z^{-k-1} i^{zr}_R \after \sigma^{B_r(0)}_z dz \\
    &= \ootpi \int_{tS^1} z^{-k-1} i^r_R \after i^{zr}_r \after \sigma^{B_r(0)}_z dz \\
    &= i^r_R \after \left(\ootpi \int_{tS^1} z^{-k-1} i^{zr}_r \after \sigma^{B_r(0)}_z dz \right)\\
    &= i^r_R \after l^r_k\displayperiod
  \end{align*}
   We evaluate at~$q=1$ to get
  \[
    i^r_R = \sum_{k\in \Z} i^r_R \after l^r_k 
  \]
  with absolute convergence in some completely normable subobject of
  \[
    \BVS(F(B_r(0)),F(B_R(0)))
  \] 
  because the Laurent expansion of a holomorphic function with values in a complete bornological space converges absolutely in some completely normable subobject on any compact subset. 
\end{proof}
\Subsection{From Prefactorization Algebras to Geometric Vertex Algebras}\label{subsection:FromPrefactorizationAlgebrasToGeometricVertexAlgebras}
We fix a holomorphic prefactorization algebra~$F$ and construct a geometric vertex algebra~$\mathbf{F}\V$ from it under additional assumptions which we now describe. 
Let~$\mathbf{V}F$ denote the~$\Z$-graded vector space
\[
\mathbf{V}F = \bigoplus_{k\in \Z} F(0)_k,
\]
the direct sum of the weight spaces of the~$\D^\times$-action on the costalk of~$F$ at zero.
The homogeneous components~$F(0)_k$ are complete bornological vector spaces and we assume these to be discrete, as is done in the setting of differentiable vector spaces in~\cite{CostelloGwilliam} for the weight spaces of~$F$ evaluated on a disc.
(In common examples, the homogeneous components are in fact finite-dimensional and~$\mathbf{V}F$ is bounded from below or even concentrated in non-negative degrees.)
Assuming that the weight spaces of the costalk are discrete, we construct a~$\Z$-graded 
vector space~$\mathbf{V}F$ from~$F$ together with the underlying data of a geometric vertex algebra. 
We prove that these data satisfy the axioms of a geometric vertex algebra except possibly meromorphicity. 
We then say that a holomorphic prefactorization algebra has \emph{meromorphic OPE} if~$\mathbf{V}F$ satisfies the meromorphicity axiom, so that holomorphic prefactorization algebras with discrete weight spaces and meromorphic OPE yield geometric vertex algebras. 
If~$\mathbf{V}F$ is bounded from below, then it is automatically meromorphic by Proposition~2.3 
of~\cite{GeometricVertexAlgebras}.
The author expects that the discreteness assumption can be removed with more work.

The multiplication maps of~$F$ induce a map
\[
M^{z_1,\ldots,z_m}: F(z_1) \otimes \ldots \otimes F(z_m) \longrightarrow \colim_{R>0} F(B_R(0))
\]
for~$z\in \C^m\setminus \Delta$ with~$|z_i| < R$. We get a bounded linear map
\[
\widetilde{M}^{z_1,\ldots,z_m}: F(0)^{\otimes m} \longrightarrow \colim_{R>0} F(B_R(0)) 
\]
by using the translation invariance of~$F$.
This is almost the multiplication map of the geometric vertex algebra associated with~$F$; it remains to precompose with a map from~$\mathbf{V}F$ and postcompose with a map to~$\overline{\mathbf{V}F}$.
The target of~$\widetilde{M}^{z}$ maps to
\[
\overline{\mathbf{V}F} := \prod_{k\in \Z} F(0)_k
\]
as follows. For~$R>0$ and~$k\in \Z$, the map
\(
F(B_R(0)) \longrightarrow F(0)_k
\)
is the composite
\begin{align}
  F(B_R(0)) \overset{l^R_k}{\longrightarrow} F(B_R(0))_k \overset{\cong}{\longrightarrow} \lim_{r> 0} F(B_r(0))_k \cong F(0)_k\displaycomma \label{display:FBRZeroToCostalkWeightSpace}
\end{align}
where the first isomorphism arises from Proposition~\ref{proposition:InclusionInducesIsomorphismsOnWeightSpaces} and the second from the fact that limits commute with each other.
The source of~$\widetilde{M}$ receives a map from~$\mathbf{V}F^{\otimes m}$ induced by the inclusion of~$\mathbf{V}F$ into~$F(0)$.
Post- and precomposing with these maps results in a map
\[
\mu^{z_1,\ldots,z_m}: \mathbf{V}F^{\otimes m} \longrightarrow \overline{\mathbf{V}F}
\]
still depending on~$z\in \C^m\setminus \Delta$, as it should.

We now introduce a little bit more notation to write down the explicit formulas for~$\mu^z$ used to check the axioms of a geometric vertex algebra.
These formulas differ from the above more abstract description in that specific radii of discs appear. 
As in~\cite{CostelloGwilliam} and~\cite{HuangFunctionalAnalyticTheoryI} and other places, we consider a moduli space of a fixed number of discs sitting inside a larger disc.
For~$r_1,\ldots,r_m>0$, let~$D(r_1,\ldots,r_m; R)$ denote the set of~$z\in \C^m$ with
\[
B_{r_1}(z_1)\sqcup \ldots \sqcup B_{r_m}(z_m) \subseteq B_R(0)\displayperiod
\]
For~$r>0$, let
\[ 
P_r : \mathbf{V}F = \bigoplus_{k\in \Z} F(0)_k \longrightarrow F(B_r(0))
\]
be the map sending a finite sum of homogeneous elements to the sum inside~$F(B_r(0))$, that is,~$P_r$ is given by
\begin{ctikzcd}
F(0)_k = \left(\lim_{s>0} F(B_s(0))\right)_k\rar{(\pi_r)_k} & F(B_r(0))_k \subseteq F(B_r(0))
\end{ctikzcd}
on the~$k$-th summand for~$k \in \Z$.
Let
\[
L^R : F(B_R(0)) \longrightarrow \prod_{k\in \Z} F(0)_k = \overline{\mathbf{V}F}
\]
be the map whose~$k$-th component is given by~(\ref{display:FBRZeroToCostalkWeightSpace}).
In terms of the maps~$L^R$,~$P_{r_i}$, the definition of~$\mu^z$ is equivalent to the equation
\begin{align}
&\mu(a_1,z_1,\ldots,a_m,z_m) = \mu^z(a) \notag \\
&= L^R(M^{B_{r_1}(z_1),\ldots,B_{r_m}(z_m)}_{B_R(0)}(\sigma^{B_{r_1}(0)}_{(+z_1)}(P_{r_1}(a_1)),\ldots,\sigma^{B_{r_m}(0)}_{(+z_m)}(P_{r_m}(a_m)))
\end{align}
in~$\Vbar$ for all~$a=a_1\otimes\ldots\otimes a_m \in \mathbf{V}F^{\otimes m}$ and~$r_1,\ldots,r_m,R>0$ with~$z\in D(r_1,\ldots,r_m;R)$. 
This is because~$P_r$ is the lower left composite in the commutative square
\begin{ctikzcd}
  F(0)_k \rar{} \dar{(\pi_r)_k} & F(0) \dar{\pi_r} \\
  F(B_r(0))_k \rar{} & F(B_r(0)) 
\end{ctikzcd}
in which the horizontal maps are the inclusions. 
Note that, for every~$R>0$, an element~$x \in \overline{\mathbf{V}F}$ is uniquely determined by the~$P_R p_k x$ for~$k \in\Z$.
Considering one degree~$k\in \Z$ at a time,
\begin{align} 
&P_R (p_k (\mu^z(a)))\notag\\ 
&= l^R_k(M^{B_{r_1}(z_1),\ldots,B_{r_m}(z_m)}_{B_R(0)}(\sigma^{B_{r_1}(0)}_{(+z_1)}(P_{r_1}(a_1)),\ldots,\sigma^{B_{r_m}(0)}_{(+z_m)}(P_{r_m}(a_m))))
\end{align}
in~$F(B_R(0))_k$ because~$P_R p_k L^R y= l^R_k y$ for all~$y \in F(B_R(0))$.

So far, we know that the maps~$\mu$ are linear maps from~$\mathbf{V}F^{\otimes m}$ to the set of functions on~$\C^m\setminus\Delta$ with values in~$\overline{\mathbf{V}F}$.
Our next goal is holomorphicity. 
\begin{proposition}\label{proposition:MuFromHolomorphicPrefactorizationAlgebraIsHolomorphic}
Let~$a\in \V^{\otimes m}$. Then~$\mu^z(a) = \mu(a,z)$ is a holomorphic function of~$z\in \C^m \setminus \Delta$.
\end{proposition}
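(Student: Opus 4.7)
The plan is to reduce to a local question around a fixed $z^0 \in \C^m \setminus \Delta$ and to exploit associativity of $F$ together with the holomorphicity hypothesis in Definition~\ref{definition:HolomorphicallyInvariantPrecosheaves}, which only mentions 1-ary translations. Since holomorphicity is local and since a map into a product is holomorphic if and only if each component is (Proposition~\ref{proposition:HolomorphicIfAndOnlyIfComponentsAre}), it suffices to find, for each $z^0$, a neighborhood on which each component $p_k \mu^z(a) \in F(0)_k$ is holomorphic in $z$.

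Fix small $r_i > 0$ together with strictly larger $\rho_i > r_i$ and some $R > 0$ so that the discs $B_{\rho_1}(z_1^0), \ldots, B_{\rho_m}(z_m^0)$ are pairwise disjoint and contained in $B_R(0)$. On a sufficiently small neighborhood $U$ of $z^0$, $z \in D(r_1, \ldots, r_m; R)$ and $B_{r_i}(z_i) \subseteq B_{\rho_i}(z_i^0)$ for every $i$ and every $z \in U$. Using associativity of the $m$-ary multiplication (Remark~\ref{remark:PrefactorizationAlgebrasAndMAryOperations}) to insert the intermediate discs $B_{\rho_i}(z_i^0)$, the operator
\[
  M^{B_{r_1}(z_1),\ldots,B_{r_m}(z_m)}_{B_R(0)} \after \bigl(\sigma^{B_{r_1}(0)}_{(+z_1)} \otimes \cdots \otimes \sigma^{B_{r_m}(0)}_{(+z_m)}\bigr)
\]
factors as $M_0 \after \bigl(\phi_1(z_1) \otimes \cdots \otimes \phi_m(z_m)\bigr)$, where $M_0 := M^{B_{\rho_1}(z_1^0),\ldots,B_{\rho_m}(z_m^0)}_{B_R(0)}$ is a fixed bounded linear map and $\phi_i(z_i) := M^{B_{r_i}(z_i)}_{B_{\rho_i}(z_i^0)} \after \sigma^{B_{r_i}(0)}_{(+z_i)}$. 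The crucial observation is that each $\phi_i$ is a 1-ary ``move-the-disc-by-translation'' map of exactly the form covered by Definition~\ref{definition:HolomorphicallyInvariantPrecosheaves} (applied with source $B_{r_i}(0)$ and target $B_{\rho_i}(z_i^0)$), so $z_i \mapsto \phi_i(z_i)$ is holomorphic.

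Setting $a_i' := P_{r_i}(a_i)$, the defining formula of $\mu^z(a)$ now reads $L^R \after M_0\bigl(\phi_1(z_1)(a_1') \otimes \cdots \otimes \phi_m(z_m)(a_m')\bigr)$. Each $z_i \mapsto \phi_i(z_i)(a_i')$ is holomorphic into $F(B_{\rho_i}(z_i^0))$ (bounded linear evaluation after a holomorphic map), and iterated application of the external product of holomorphic functions from Subsection~\ref{subsection:FunctionalAnalyticPrelim} gives that $z \mapsto \phi_1(z_1)(a_1') \otimes \cdots \otimes \phi_m(z_m)(a_m')$ is holomorphic into the completed tensor product. Post-composition with the bounded linear map $L^R \after M_0$ preserves holomorphicity, which combined with Proposition~\ref{proposition:HolomorphicIfAndOnlyIfComponentsAre} finishes the argument.

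The main obstacle I expect is the geometric bookkeeping: the $m$-ary operator $M^{B_{r_1}(z_1),\ldots,B_{r_m}(z_m)}_{B_R(0)}$ depends on $z$ both through the positions of the source discs and through the translations $\sigma^{B_{r_i}(0)}_{(+z_i)}$, while the holomorphicity axiom is phrased only for 1-ary maps with fixed source and target. Choosing the fixed intermediate discs $B_{\rho_i}(z_i^0)$ and invoking associativity is precisely what peels this joint $z$-dependence off into 1-ary factors to which the hypothesis applies directly.
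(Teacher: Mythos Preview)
Your proof is correct and follows essentially the same route as the paper: fix a point, choose slightly larger intermediate discs with fixed centers, use associativity of $F$ to factor the $z$-dependent $m$-ary multiplication through the fixed map $M_0$ composed with $m$ separate 1-ary ``move-by-translation'' maps $\phi_i$, then invoke the holomorphicity hypothesis of Definition~\ref{definition:HolomorphicallyInvariantPrecosheaves} on each $\phi_i$ and postcompose with bounded linear maps. The paper's $\rho_i$ are your $\phi_i$ and its pair $(s,r)=(r/2,r)$ plays the role of your $(r_i,\rho_i)$; the only cosmetic difference is that the paper tensors the operator-valued holomorphic maps first and then evaluates, whereas you evaluate each $\phi_i$ at $a_i'$ first and then take the external product, which is equivalent.
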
 
\begin{proof}
A function~$\C^m \setminus \Delta \rightarrow \overline{\mathbf{V}F}$ is holomorphic if and only if its components~$p_k \after f$ are holomorphic for all~$k\in\Z$.
Since~$\mathbf{V}F_k$ is assumed to be discrete,~$p_k \after f$ is holomorphic if it is locally a holomorphic map to a finite-dimensional subspace of~$\mathbf{V}F_k$.
(This notion of holomorphicity coincides with the notion of holomorphicity used in the definition of a geometric vertex algebra.)
 
Let~$z\in \C^m \setminus \Delta$.
Pick~$R>0$ and~$r>0$ s.\,t.~$z\in D(r,\ldots,r; R)$.
Let~$s=r/2$.
It suffices to find a neighborhood~$U$ of~$z$ s.\,t.~$\mu^{z'}(a)$ is a holomorphic function of~$z' \in U$ for some given~$a=a_1\otimes \ldots \otimes a_m$.
We use~$U= \prod^m_{i=1} B_s(z)$ and note that it suffices to prove the holomorphicity of
\[N(z') = M^{B_s(z'_1),\ldots, B_s(z'_m)}_{B_R(0)} (\sigma^{B_s(0)}_{(+z'_1)}(x_1),\ldots,\sigma^{B_s(0)}_{(+z'_m)}(x_m))\]
for~$x_i \in F(B_s(0))$ because we can then plug in~$x_i = P_s a_i$ and postcompose with~$L_R$ to get~$\mu(a,z')$.
By the associativity of~$M$,
\begin{align*}
N(z') = M^{B_r(z_1),\ldots B_r(z_m)}_{B_R(0)} (\rho_1(z'_1)(x_1),\ldots,\rho_m(z'_m)(x_m))\displaycomma
\end{align*}
where
\labelledMapMapsto{\rho_i(z'_i)}{F(B_s(0))}{F(B_r(z_i))}{y}{M^{B_s(z'_i)}_{B_r(z_i)}(\sigma^{B_s(0)}_{(+z'_i)}(y)),}
so~$N$ is holomorphic as the composite of
\begin{itemize} 
\item the map
\MapMapsto{U}{\overline{\bigotimes}^m_{i=1} \mathbf{\BVS}(F(B_s(0)),F(B_r(z_i)))}{(z'_1,\ldots,z'_m)}{\rho_1(z'_1)\otimes \ldots \otimes \rho_m(z'_m)\displaycomma}
which is holomorphic because~$F$ is holomorphically translation-invariant, and the pointwise tensor product of holomorphic functions is holomorphic,
\item the bounded evaluation-at-$a$ map to~$\bigotimes^m_{i=1} F(B_r(z_i))$, and 
\item the bounded map~$M^{B_r(z_1),\ldots B_r(z_m)}_{B_R(0)}$.
\end{itemize}
\end{proof}
\begin{remark}
If~$F$ is holomorphic in the sense of Definition~\ref{definition:HolomorphicallyInvariantPrecosheaves}, then it is holomorphic pointwise, meaning that~$\rho_{U,V}(z)(x)$ is a holomorphic function of~$z \in \mathcal{D}_{U,V}$ for all~$x \in F(U)$. 
The proof of Proposition~\ref{proposition:MuFromHolomorphicPrefactorizationAlgebraIsHolomorphic} works if we only assume that~$F$ is holomorphic pointwise.
\end{remark}
\begin{proposition}\label{proposition:MuFromHolomorphicPrefactorizationAlgebraSatisfiesInsertionAtZero}
$(\mathbf{V}F,\mu)$ satisfies the insertion-at-zero axiom.
\end{proposition}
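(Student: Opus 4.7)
The plan is to identify the vacuum of $\mathbf{V}F$ with the image of the prefactorization algebra unit and reduce the insertion-at-zero axiom to the unitality of $F$. First, I construct $\vac$. The unit $\mathbf{1}_{\CBVS} \to F(\emptyset)$ postcomposed with the extension $M^{\emptyset}_{B_r(0)}$ yields an element $v_r \in F(B_r(0))$ for each $r > 0$; these are compatible with the maps $i^r_R$ by the associativity of $M$, hence assemble into an element $\vac \in F(0) = \lim_{r > 0} F(B_r(0))$. Because the unit is $G$-invariant, each $v_r$ is $\D^\times$-fixed, so $v_r \in F(B_r(0))_0$ and $\vac \in F(0)_0 \subseteq \mathbf{V}F$. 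By construction, $P_r(\vac) = v_r$ for every $r$. The $m=0$ case $\mu() = \vac$, if part of the axiom, follows by unfolding: $\mu()$ equals $L^R$ applied to $v_R$, whose weight-zero image is $\vac$ under $F(B_R(0))_0 \cong F(0)_0$ of Proposition~\ref{proposition:InclusionInducesIsomorphismsOnWeightSpaces}.

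Second, I verify the multi-variable identity $\mu(a_1, z_1, \ldots, a_m, z_m, \vac, 0) = \mu(a_1, z_1, \ldots, a_m, z_m)$. Choose radii $r_1, \ldots, r_m, s, R > 0$ so that $B_{r_1}(z_1), \ldots, B_{r_m}(z_m), B_s(0)$ are pairwise disjoint and contained in $B_R(0)$; this is possible because the $z_i$ are distinct and nonzero. Since $\sigma^{B_s(0)}_{(+0)}$ is the identity, the last argument of the $(m+1)$-ary multiplication in the defining formula for $\mu$ reduces to $P_s(\vac) = v_s$. By the associativity of $M$ applied to $\emptyset \subseteq B_s(0) \subseteq B_R(0)$,
\[ M^{B_{r_1}(z_1), \ldots, B_{r_m}(z_m), B_s(0)}_{B_R(0)}(x_1, \ldots, x_m, v_s) = M^{B_{r_1}(z_1), \ldots, B_{r_m}(z_m), \emptyset}_{B_R(0)}(x_1, \ldots, x_m, 1_F), \]
where $1_F \in F(\emptyset)$ is the image of $1 \in \mathbf{1}_{\CBVS}$, and by unitality the right-hand side equals $M^{B_{r_1}(z_1), \ldots, B_{r_m}(z_m)}_{B_R(0)}(x_1, \ldots, x_m)$. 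Postcomposing with $L^R$ then yields the desired identity.

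The main subtlety is in the construction of $\vac$: one must verify that the unit image $1_F \in F(\emptyset)$ defines a $\D^\times$-fixed element compatibly across all radii and so descends to weight zero of the costalk. This requires carefully tracing the $G$-invariance of the unit (via the axiom that $\sigma^\emptyset_g$ is compatible with the unit map) and the compatibility of the unit with extension maps through the definitions of the $\D^\times$-action in Definition~\ref{definition:DtimesActionOnValueOnDiscOfInvariantPrecosheaf} and of the costalk. After this setup, the verification reduces to an unfolding of the definition of $\mu$ combined with the associativity and unitality axioms for $F$.
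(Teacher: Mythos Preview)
You have proved the wrong statement. The insertion-at-zero axiom of a geometric vertex algebra is the identity
\[
  \mu(a,0) = a \quad\text{for every } a \in \mathbf{V}F,
\]
i.e., the unary operation evaluated at the origin is the inclusion~$\mathbf{V}F \hookrightarrow \overline{\mathbf{V}F}$. What you establish instead is the vacuum identity
\[
  \mu(a_1,z_1,\ldots,a_m,z_m,\vac,0) = \mu(a_1,z_1,\ldots,a_m,z_m),
\]
that inserting the unit as an extra input is harmless. These are different axioms: taking~$m=0$ in your identity only recovers~$\mu(\vac,0)=\vac$, the special case~$a=\vac$ of what is needed, and nothing in your argument addresses a general~$a$.

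The paper's proof is a direct unfolding of the definition of~$\mu^z$ for~$m=1$ at~$z=0$: one checks that for each~$R>0$ and each~$k\in\Z$,
\[
  P_R\,p_k\,\mu(a,0) = l^R_k\bigl(M^{B_r(0)}_{B_R(0)}(P_r a)\bigr) = l^R_k(P_R a) = P_R\,p_k\,a,
\]
using that~$P_R$ is~$\D^\times$-equivariant so that~$P_R p_l a$ lies in the~$l$-th weight space and~$l^R_k$ picks out exactly the~$k$-th component. The unit of~$F$ plays no role here; the key point is that the composite~$L^R \circ M^{B_r(0)}_{B_R(0)} \circ P_r$ restricted to~$\mathbf{V}F$ recovers the inclusion into~$\overline{\mathbf{V}F}$, which is a statement about the compatibility of the weight-space projections~$l^R_k$ with the maps~$P_r$ and~$i^r_R$.
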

\begin{proof}
Let~$a\in \mathbf{V}F$. 
For all~$R>0$ and~$k\in \Z$,
\begin{align}\label{equation:FormulaForMuOnDiscAndDegreewise}
  P_R p_k \mu(a,0) &= l^R_k M^{B_r(0)}_{B_R(0)} (P_r a) = l^R_k P_R a = l^R_k P_R \sum_{l\in \Z} p_l a \notag\\
  &= \sum_{l\in \Z} l^R_k P_R p_l a = P_R p_k a\displaycomma
\end{align}
where the last equality holds because~$P_R$ is~$\D^\times$-equivariant, so~$P_R p_l a$ has degree~$l$, and~$l^R_k$ is the identity on the~$k$-th weight space and zero on the others.   
This implies~$\mu(a,0) = a$.
\end{proof}
\begin{proposition} 
$(\mathbf{V}F,\mu)$ is~$\C^\times$-equivariant.
\end{proposition}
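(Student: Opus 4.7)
The $\C^\times$-equivariance axiom for a geometric vertex algebra asserts that for homogeneous $a_i \in \V_{k_i}$ and $\lambda \in \C^\times$, one has $p_k\mu(a,\lambda z) = \lambda^{k-(k_1+\ldots+k_m)} p_k \mu(a,z)$ for every $k\in\Z$. My plan is to verify this degreewise, directly from the explicit formula for $P_R p_k \mu^z(a)$ derived in the previous subsection, using the $\C^\times$-equivariance of the prefactorization structure maps of~$F$.

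First I would choose radii $r_i, R > 0$ with $z \in D(r_1,\ldots,r_m;R)$ and apply the formula to $\mu^{\lambda z}(a)$ with the rescaled radii $|\lambda|r_i$ and $|\lambda|R$, so that $\lambda z \in D(|\lambda|r_1,\ldots,|\lambda|r_m;|\lambda|R)$. The $\C^\times$-equivariance of~$M$ then lets me rewrite
\[
M^{B_{|\lambda|r_1}(\lambda z_1),\ldots}_{B_{|\lambda|R}(0)} = \sigma^{B_R(0)}_\lambda \after M^{B_{r_1}(z_1),\ldots}_{B_R(0)} \after \bigl((\sigma^{B_{r_1}(z_1)}_\lambda)^{-1} \otimes \ldots \otimes (\sigma^{B_{r_m}(z_m)}_\lambda)^{-1}\bigr),
\]
and the composition law $\sigma^{B_{r_i}(z_i)}_\lambda \after \sigma^{B_{r_i}(0)}_{(+z_i)} = \sigma^{B_{|\lambda|r_i}(0)}_{(+\lambda z_i)} \after \sigma^{B_{r_i}(0)}_\lambda$ in $\C^\times\ltimes\C$ lets me absorb the inner translation $\sigma^{B_{|\lambda|r_i}(0)}_{(+\lambda z_i)}$, leaving $(\sigma^{B_{r_i}(0)}_\lambda)^{-1}$ acting on $P_{|\lambda|r_i}a_i$ and an outer $\sigma^{B_R(0)}_\lambda$ acting on the output of~$M$ at the original positions~$z_i$.

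The crux is the identity $\sigma^{B_r(0)}_\lambda \after P_r = \lambda^{k_i} P_{|\lambda|r}$ on $\V_{k_i}$, which converts each $\sigma^{B_{r_i}(0)}_\lambda$ into the scalar $\lambda^{k_i}$. I would prove it by noting that $\sigma^{B_r(0)}_\lambda$ commutes with the $\D^\times$-action (as $\C^\times$ is abelian) and hence preserves weight spaces, and then, for $\lambda\in\D^\times$, combining the identity $(i^{|\lambda|r}_r)_{k_i}\after(\sigma^{B_r(0)}_\lambda)_{k_i} = \lambda^{k_i}\id$ from the proof of Proposition~\ref{proposition:InclusionInducesIsomorphismsOnWeightSpaces} with the limit-compatibility $(i^{|\lambda|r}_r)_{k_i}\after(\pi_{|\lambda|r})_{k_i} = (\pi_r)_{k_i}$; for general $\lambda\in\C^\times$ the identity follows by holomorphic continuation, using the extension of the $\D^\times$-representation on~$F(0)$ to a holomorphic $\C^\times$-representation mentioned after Proposition~\ref{proposition:SumOfWeightSpaceProjections}. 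An analogous argument shows $l^{|\lambda|R}_k \after \sigma^{B_R(0)}_\lambda = \lambda^k \cdot ((i^{|\lambda|R}_R)_k)^{-1} \after l^R_k$, contributing a further factor of $\lambda^k$. Combining the factor $\lambda^{-\sum k_i}$ from inverting the $\sigma^{B_{r_i}(0)}_\lambda$, the factor $\lambda^k$ from the outer projection, and the identity $((i^{|\lambda|R}_R)_k)^{-1}(\pi_R)_k = (\pi_{|\lambda|R})_k$ to pass between weight spaces of the differently-sized outer discs, I would read off $P_{|\lambda|R} p_k \mu^{\lambda z}(a) = \lambda^{k-\sum k_i} P_{|\lambda|R}(p_k \mu^z(a))$, and injectivity of $(\pi_{|\lambda|R})_k$ then yields the claim.

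I expect the main difficulty to be the bookkeeping in the third step: reconciling the $\C^\times$-action, which rescales each disc, with the weight-space decomposition, which is defined in terms of $\D^\times$-endomorphisms of a fixed disc, while carefully tracking the canonical isomorphisms $(i^r_R)_k$ and $(\pi_r)_k$ that identify weight spaces of differently-sized discs with those of the costalk. Once this is sorted, the equivariance is forced by the equivariance of~$M$ and the group law of~$\C^\times\ltimes\C$.
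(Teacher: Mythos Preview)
Your strategy is essentially the paper's: exploit the $\C^\times\ltimes\C$-invariance of $M$ and $\sigma$, the group law, and analytic continuation from $\D^\times$ to $\C^\times$. The difference is organizational. The paper first carries out the entire chain of manipulations for $q\in\D^\times$ only---computing $P_Rp_k(q.\mu(a,z))$ step by step inside the fixed space $F(B_R(0))$ and arriving at $P_Rp_k\,\mu(q.a,qz)$---and only then invokes analytic continuation on the final scalar identity $p_k\mu(a,qz)=q^{k-\sum k_i}p_k\mu(a,z)$, both sides of which are visibly holomorphic in $q$ with values in the fixed space $F(0)_k$.

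Your route instead tries to analytically continue the \emph{intermediate} identities $\sigma^{B_r(0)}_\lambda\!\circ P_r=\lambda^{k_i}P_{|\lambda|r}$ and $l^{|\lambda|R}_k\!\circ\sigma^{B_R(0)}_\lambda=\lambda^k((i^{|\lambda|R}_R)_k)^{-1}\!\circ l^R_k$ from $\D^\times$ to $\C^\times$. This is where a gap appears: both identities have target $F(B_{|\lambda|r}(0))$ or $F(B_{|\lambda|R}(0))_k$, spaces that vary with $|\lambda|$, so there is no fixed codomain in which to run the identity theorem. Invoking the $\C^\times$-representation on the costalk $F(0)$ does not directly repair this, since the identities concern $\sigma^{B_r(0)}_\lambda$, not the limit action. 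The gap is fixable (for instance compose with $i^{|\lambda|r}_R$ for a fixed large $R$ so the target becomes $F(B_R(0))$, then use holomorphicity of $\rho_{B_r(0),B_R(0)}$; or handle $|\lambda|>1$ by inverting the $\D^\times$-case), but it is simpler to reorganize as the paper does: carry out everything for $q\in\D^\times$, then continue once at the very end where the target space is fixed.
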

\begin{proof}
For~$\D^\times$ instead of~$\C^\times$, this follows since the maps are equivariant w.\,r.\,t.\ this semi-group in an appropriate sense:
For all degrees~$k\in \Z$, it suffices to consider some~$R>0$ with~$z_i \in B_R(z_i)$ for~$i=1,\ldots,m$ with~$r>0$ small enough so that~$z \in D(r,\ldots,r;R)$, and to show equality after application of~$P_R \after p_k$.
\begin{align*}
  &P_Rp_k(q.\mu(a_1,z_1,\ldots,a_m,z_m))\\
  &=q.P_R(p_k(\mu(a_1,z_1,\ldots,a_m,z_m)))\\
  &=q.l^R_k(M^{B_r(z_1),\ldots,B_r(z_m)}_{B_R(0)}(\sigma^{B_r(0)}_{(+z_1)}(P_ra_1),\ldots, \sigma^{B_r(0)}_{(+z_m)}(P_ra_m)))\\
  &=l^R_k(q.M^{B_r(z_1),\ldots,B_r(z_m)}_{B_R(0)}(\sigma^{B_r(0)}_{(+z_1)}(P_ra_1),\ldots, \sigma^{B_r(0)}_{(+z_m)}(P_ra_m)))\\
  \intertext{(equivariance of~$P_R$ and~$p_k$, Equation~\ref{equation:FormulaForMuOnDiscAndDegreewise}, equivariance of~$l^R_k$)}
  &=l^R_kM^{B_{qR}(0)}_{B_R(0)}\sigma^{B_R(0)}_q M^{B_r(z_1),\ldots,B_r(z_m)}_{B_R(0)}(\sigma^{B_r(0)}_{(+z_1)}(P_ra_1),\ldots, \sigma^{B_r(0)}_{(+z_m)}(P_ra_m))\\
  \intertext{(definition of the action, see Definition~\ref{definition:DtimesActionOnValueOnDiscOfInvariantPrecosheaf})}
  &=l^R_kM^{B_{qR}(0)}_{B_R(0)}M^{B_{qr}(qz_1),\ldots,B_{qr}(qz_m)}_{B_{qR}(0)}(\sigma^{B_r(z_1)}_{(q\cdot)}(\sigma^{B_r(0)}_{(+z_1)}(P_ra_1)),\ldots,\\
  &\quad\quad\quad\quad\sigma^{B_r(z_m)}_{(q\cdot)}( \sigma^{B_r(0)}_{(+z_m)}(P_ra_m) )) \\
  \intertext{(invariance of~$F$, see Remark~\ref{remark:InvariantPrefactorizationAlgebrasAndMAryOperations})} 
  &=l^R_kM^{B_{qr}(qz_1),\ldots,B_{qr}(qz_m)}_{B_{R}(0)}(\sigma^{B_{qr}(0)}_{(+qz_1)}(\sigma^{B_r(0)}_{(q\cdot)}(P_ra_1)),\ldots,\sigma^{B_{qr}(0)}_{(+qz_m)}( \sigma^{B_r(0)}_{(q\cdot)}(P_ra_m) ))\\
\intertext{(associativity of~$F$, composition law in~$\C^\times \ltimes \C$)}
  &=l^R_k M^{B_r(qz_1),\ldots,B_r(qz_m)}_{B_R(0)}(M^{B_{qr}(qz_1)}_{B_r(qz_1)}(\sigma^{B_{qr}(0)}_{(+qz_1)}(\sigma^{B_r(0)}_{(q\cdot)}(P_ra_1))),\ldots,\\
  &\quad\quad\quad\quad M^{B_{qr}(qz_m)}_{B_r(qz_m)}(\sigma^{B_{qr}(0)}_{(+qz_m)}( \sigma^{B_r(0)}_{(q\cdot)}(P_ra_m) )))\quad\quad   \text{(associativity)}  \\
  &=l^R_k M^{B_r(qz_1),\ldots,B_r(qz_m)}_{B_R(0)}(\sigma^{B_{r}(0)}_{(+qz_1)}(M^{B_{qr}(0)}_{B_r(0)}(\sigma^{B_r(0)}_{(q\cdot)}(P_ra_1))),\ldots,\\
  &\quad\quad\quad\quad \sigma^{B_{r}(0)}_{(+qz_m)}(M^{B_{qr}(0)}_{B_r(0)}(\sigma^{B_r(0)}_{(q\cdot)}(P_ra_m) ))) \quad\quad\quad\text{(invariance)}\\
  &=l^R_k M^{B_r(qz_1),\ldots,B_r(qz_m)}_{B_R(0)}(\sigma^{B_{r}(0)}_{(+qz_1)}(q.P_ra_1),\ldots,\sigma^{B_{r}(0)}_{(+qz_m)}(q.P_ra_m) )\\
  &=l^R_k M^{B_r(qz_1),\ldots,B_r(qz_m)}_{B_R(0)}(\sigma^{B_{r}(0)}_{(+qz_1)}(P_r(q.a_1)),\ldots,\sigma^{B_{r}(0)}_{(+qz_m)}(P_r(q.a_m)) )\\
  &=P_Rp_k\mu(q.a_1,qz_1,\ldots,q.a_m,qz_m) \quad\quad\text{(Equation \ref{equation:FormulaForMuOnDiscAndDegreewise})}
\end{align*}  
Equivariance for~$\C^\times$ follows from the uniqueness of analytic continuation.
\end{proof}
\begin{proposition}\label{proposition:MuFromHolomorphicPrefactorizationAlgebraIsAssociative}
$\mu$ satisfies the associativity axiom.
\end{proposition}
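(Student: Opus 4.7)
The plan is to verify associativity degree-by-degree and after applying $P_R$ for a sufficiently large $R > 0$, in the style of the proofs of Proposition~\ref{proposition:MuFromHolomorphicPrefactorizationAlgebraSatisfiesInsertionAtZero} and the $\C^\times$-equivariance proposition above. The axiom compares a nested expression, in which one argument of an outer $\mu$ is itself the value of an inner $\mu(b_1,w_1,\ldots,b_n,w_n) \in \overline{\mathbf{V}F}$ inserted at a point $z$, with the flat expression $\mu(b_1,z+w_1,\ldots,b_n,z+w_n,a_1,z_1,\ldots,a_m,z_m)$. I choose $R$ large enough to contain every relevant disc, a radius $r$ so that $B_r(z)$ hosts the inner cluster, smaller radii $s_j$ for the $b_j$'s at $w_j \in B_r(0)$ before translation, radii $r_i$ for the outer $a_i$'s at $z_i$, and fix $k \in \Z$. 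It suffices to check equality after $P_R \after p_k$.

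Unfolding the right-hand side via Equation~(\ref{equation:FormulaForMuOnDiscAndDegreewise}) gives a single $(n+m)$-ary $M$ out of $\bigsqcup_j B_{s_j}(z+w_j) \sqcup \bigsqcup_i B_{r_i}(z_i)$ into $B_R(0)$. The left-hand side first produces an $(1{+}m)$-ary $M$ out of $B_r(z) \sqcup \bigsqcup_i B_{r_i}(z_i)$, whose first slot receives $\sigma^{B_r(0)}_{(+z)}(P_r \mu(b_1,w_1,\ldots,b_n,w_n))$. The crux is to re-expand this inner insertion: pick $r' < r$ with $\bigsqcup_j B_{s_j}(w_j) \subseteq B_{r'}(0)$, and invoke Proposition~\ref{proposition:SumOfWeightSpaceProjections} to write $i^{r'}_r = \sum_{k'} i^{r'}_r \after l^{r'}_{k'}$, an absolutely convergent sum in some completely normable subobject of $\BVS(F(B_{r'}(0)), F(B_r(0)))$. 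Applied to the inner $M$-expression $N := M^{B_{s_1}(w_1),\ldots,B_{s_n}(w_n)}_{B_{r'}(0)}(\sigma^{B_{s_1}(0)}_{(+w_1)}(P_{s_1}b_1),\ldots,\sigma^{B_{s_n}(0)}_{(+w_n)}(P_{s_n}b_n))$, the degreewise formula~(\ref{equation:FormulaForMuOnDiscAndDegreewise}) for the inner multiplication identifies $i^{r'}_r(l^{r'}_{k'}(N))$ with $P_r(p_{k'}\mu(b_1,w_1,\ldots,b_n,w_n))$ under the weight-space isomorphisms of Proposition~\ref{proposition:InclusionInducesIsomorphismsOnWeightSpaces}, so the sum over $k'$ reproduces $P_r \mu(b_1,w_1,\ldots,b_n,w_n)$ itself.

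With the inner insertion rewritten as $\sigma^{B_r(0)}_{(+z)} \after i^{r'}_r(N)$, the outer $(1{+}m)$-ary $M$ has an $n$-ary $M$ sitting in its first slot, factored through the inclusion $B_{r'}(0) \subseteq B_r(0)$ translated to $z$. The associativity of the prefactorization algebra (Remark~\ref{remark:PrefactorizationAlgebrasAndMAryOperations}), together with the cocycle identity $\sigma^{B_{s_j}(z+w_j)}_{(+z)} \after \sigma^{B_{s_j}(0)}_{(+w_j)} = \sigma^{B_{s_j}(0)}_{(+z+w_j)}$ for translations, then collapses the nested composition into the $(n+m)$-ary $M$ appearing on the right-hand side. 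The main obstacle is the legitimacy of interchanging the infinite sum over $k'$ with the bounded outer $M$, the translation $\sigma^{B_r(0)}_{(+z)}$, and the final projection $l^R_k$; this is exactly what the absolute convergence from Proposition~\ref{proposition:SumOfWeightSpaceProjections} affords, and the discreteness assumption on the weight spaces of $\mathbf{V}F$ guarantees that each summand is well-defined. Extending the equality from the subdomain where this computation literally applies to the full domain of definition follows by analytic continuation, using the holomorphicity established in Proposition~\ref{proposition:MuFromHolomorphicPrefactorizationAlgebraIsHolomorphic}.
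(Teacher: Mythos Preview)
Your proposal is correct and follows essentially the same route as the paper. Both arguments hinge on Proposition~\ref{proposition:SumOfWeightSpaceProjections}: one writes the inclusion $i^{r'}_r$ (the paper uses $i^{T_1}_{T_2}$) as an absolutely convergent sum of weight projections, identifies the image of $i^{r'}_r$ under the outer operations with the flat $\mu$ via associativity and translation invariance of $F$, and identifies the image of each $i^{r'}_r\circ l^{r'}_{k'}$ with the $k'$-th summand of the nested expression. The paper packages all of the outer operations into a single bounded linear map $\Phi:\BVS(F(B_{T_1}(0)),F(B_{T_2}(0)))\to\mathcal{O}(U;\overline{\mathbf{V}F})$, which makes the passage from convergence of operators to convergence of holomorphic functions on a neighborhood $U$ immediate; your step-by-step version amounts to the same thing once you observe that each outer operation is bounded. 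Your final appeal to analytic continuation is unnecessary: for every $(z,w)\in\mathcal{A}_{m,n}$ one can choose radii so that the computation goes through on a neighborhood, and these neighborhoods cover $\mathcal{A}_{m,n}$, so there is nothing to continue.
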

The map~$\Phi$ in the following proof plays a very similar role as the map denoted by the same letter in Proposition~5.3.6 of~\cite{CostelloGwilliam}.
\begin{proof}
Let~$a_1, \ldots, a_m \in \mathbf{V}F$,~$b_1,\ldots, b_n \in \mathbf{V}F$.
Let
\[
\mathcal{A}_{m,n} = \{ (z,w) \in (\C^m \setminus \Delta) \times (\C^n\setminus \Delta) \mid \max_{1\leq j \leq n} |w_j| < \min_{1\leq i \leq m} |z_i - z_{m+1}| \}
\]
denote the set of tuples of points in~$\C$ as in the associativity axiom of a geometric vertex algebra.
The summands in the associativity axiom, viewed as functions of~$(z,w) \in \mathcal{A}_{m,n}$, are elements of
\(
\mathcal{O}(\mathcal{A}_{m,n}; \overline{\mathbf{V}F})
\)
and we prove absolute convergence in a completely normable subobject. 
By the definition of the bornology on~$\mathcal{O}(\mathcal{A}_{m,n}; \mathbf{V}F)$, this implies that, for every compact subset~$K \subseteq \mathcal{A}_{m,n}$ and~$k \in \Z$, there is a completely normable subobject~$V_{K,k}$ of~$\mathbf{V}F_k$ in which absolute convergence takes place.
The discreteness of~$\mathbf{V}F_k$ implies that the bounded unit disc of~$V_{K,k}$ is contained in a finite-dimensional subspace of~$\mathbf{V}F_k$ so~$V_{K,k}$ is finite-dimensional.  

It suffices to prove convergence of functions on a neighborhood~$U$ of~$(z,w)$ for every~$(z,w) \in \mathcal{A}_{m,n}$. 
Let~$S > T_2 >T_1>0$ all be smaller than~$\min_{1\leq i \leq m} |z_i - z_{m+1}|$, but only slightly smaller, so that
\[
w_1,\ldots,w_n \in B_{T_1}(0) \quad (\subset B_{T_2}(0) \subset B_S(0))\displayperiod
\]
Let~$r>0$ be small enough so that~$w \in D(r,\ldots, r; T_1)$ and the~$B_r(z_i), i=1,\ldots, m,$ are pairwise disjoint and disjoint from~$B_S(z_{m+1})$.
Let~$R>0$ be big enough so that
\[
z \in D(r,\ldots,r,S;R)\displayperiod 
\]
Let~$s=r/2$ and
\[
U = \prod^m_{i=1} B_s(z_i) \times B_{S-T_2}(z_{m+1}) \times \prod^n_{j=1} B_s(w_j)
\]
which is a subset of~$\mathcal{A}_{m,n}$ as we now check.
Let~$(z',w') \in U$. 
For all~$i,j$ with~$1 \leq i \leq m$ and~$1\leq j \leq n$,
\begin{align*}
  |w'_j| &< |w_j| + s = |w_j| + r -s < T_1 - s  \\
  &< S = S+r - 2s \leq |z_i - z_{m+1}| -2s < |z'_i - z'_{m+1}| 
\end{align*}
so~$(z',w') \in \mathcal{A}_{m,n}$.

We show convergence of holomorphic functions on~$U$ with values in~$\overline{\mathbf{V}F}$ by constructing a bounded linear map
\[
\Phi : \BVS(F(B_{T_1}(0)),F(B_{T_2}(0))) \longrightarrow \mathcal{O}(U;\overline{\mathbf{V}F})
\]
with~$\Phi(i^{T_1}_{T_2})$ given by the r.\,h.\,s. of the associativity axiom,
 see~(2) 
 of~\cite{GeometricVertexAlgebras}, 
 and~$\Phi(i^{T_1}_{T_2}\after l^{T_1}_k)$ given by the~$k$-th summand of the associativity axiom for all~$k\in Z$,
  see~(1) 
  of~\cite{GeometricVertexAlgebras}.
The existence of such a map implies the associativity axiom on~$U$ because bounded linear maps map absolutely convergent sums to absolutely convergent sums and~$i^{T_1}_{T_2} = \sum_{k\in \Z} i^{T_1}_{T_2} \after l^{T_1}_k$ by Proposition~\ref{proposition:SumOfWeightSpaceProjections}.
Let 
\begin{align*}
  &(\Phi(f))(z',w')\\
  &= L^R M^{B_s(z'_1),\ldots,B_s(z'_{m}),B_{T_2}(z'_{m+1})}_{B_R(0)} (\sigma^{B_s(0)}_{(+z'_1)}(P_sa_1),\ldots,\sigma^{B_s(0)}_{(+z'_m)}(P_sa_m),\\
  &\phantom{=L^RM()} \sigma^{B_{T_2}(0)}_{(+z'_{m+1})}(f(M^{B_s(w'_1),\ldots,B_s(w'_n)}_{B_{T_1}(0)}(\sigma^{B_s(0)}_{(+w'_1)}(P_sb_1),\ldots,\sigma^{B_s(0)}_{(+w'_n)}(P_sb_n))))\displayperiod
\end{align*} 
For~$f=i^{T_1}_{T_2}$, we use associativity, translation invariance, and associativity again:
\begin{align*}
&(\Phi(f))(z',w')\\
&=L^R M^{B_s(z'_1),\ldots,B_s(z'_{m}),B_{T_2}(z'_{m+1})}_{B_R(0)} (\sigma^{B_s(0)}_{(+z'_1)}(P_sa_1),\ldots,\sigma^{B_s(0)}_{(+z'_m)}(P_sa_m),\\
&\phantom{=L^R M()} \sigma^{B_{T_2}(0)}_{(+z'_{m+1})}(M^{B_s(w'_1),\ldots,B_s(w'_n)}_{B_{T_2}(0)}(\sigma^{B_s(0)}_{(+w'_1)}(P_sb_1),\ldots,\sigma^{B_s(0)}_{(+w'_n)}(P_sb_n)))\\
&= L^R M^{B_s(z'_1),\ldots,B_s(z'_{m}),B_{T_2}(z'_{m+1})}_{B_R(0)} (\sigma^{B_s(0)}_{(+z'_1)}(P_sa_1),\ldots,\sigma^{B_s(0)}_{(+z'_m)}(P_sa_m),\\
&\phantom{=} M^{B_s(w'_1+z'_{m+1}),\ldots,B_s(w'_n+z'_{m+1})}_{B_{T_2}(z'_{m+1})}(\sigma^{B_s(0)}_{(+w'_1+z'_{m+1})}(P_sb_1),\ldots,\sigma^{B_s(0)}_{(+w'_n+z'_{m+1})}(P_sb_n)))\\
&= L^R M^{B_s(z'_1),\ldots,B_s(z'_{m}),B_{T_2}(z'_{m+1}),B_s(w'_1+z'_{m+1}),\ldots,B_s(w'_n+z'_{m+1})}_{B_R(0)} (\\
&\phantom{=} \sigma^{B_s(0)}_{(+z'_1)}(P_sa_1),\ldots,\sigma^{B_s(0)}_{(+z'_m)}(P_sa_m),\sigma^{B_s(0)}_{(+w'_1+z'_{m+1})}(P_sb_1),\ldots,\sigma^{B_s(0)}_{(+w'_n+z'_{m+1})}(P_sb_n))\\
&= \mu(a_1,z'_1,\ldots,a_m,z'_m, b_1,w'_1+z'_{m+1},\ldots, b_n, w'_n+z'_{m+1})
\end{align*}
For~$k\in \Z$ and~$f=i^{T_1}_{T_2} \after l^{T_1}_k$, we consider the argument of~$\sigma^{B_{T_2}(0)}_{(+z'_{m+1})}$ which is
\begin{align*}
&i^{T_1}_{T_2} \after  l^{T_1}_k(M^{B_s(w'_1),\ldots,B_s(w'_n)}_{B_{T_1}(0)}(\sigma^{B_s(0)}_{(+w'_1)}(P_sb_1),\ldots,\sigma^{B_s(0)}_{(+w'_n)}(P_sb_n)))\\
\intertext{and, because of $i^{T_1}_{T_2} \after  l^{T_1}_k =  l^{T_2}_k \after i^{T_1}_{T_2}$ and associativity, equal to}
&P_{T_2}(p_k(\mu(b_1,w'_1,\ldots,b_n,w'_n)))\displayperiod
\end{align*}
Plugging in, this implies that
\begin{align*}
&(\Phi(f))(z',w')\\
&=L^R M^{B_s(z'_1),\ldots,B_s(z'_{m}),B_{T_2}(z'_{m+1})}_{B_R(0)} (\sigma^{B_s(0)}_{(+z'_1)}(P_sa_1),\ldots,\sigma^{B_s(0)}_{(+z'_m)}(P_sa_m),\\
&\phantom{=L^R M()} \sigma^{B_{T_2}(0)}_{(+z'_{m+1})}(P_{T_2}(p_k(\mu(b_1,w'_1,\ldots,b_n,w'_n)))))\\
&=\mu(a_1,z_1,\ldots,a_m,z_m,p_k(\mu(b_1,w'_1,\ldots,b_n,w'_n)),z_{m+1})
\end{align*}
which is the~$k$-th summand in the associativity axiom. 

Now, we check that~$\Phi$ is a bounded linear map and actually maps to the space of holomorphic functions~$\mathcal{O}(U;\overline{\mathbf{V}F})$ by writing~$\Phi$ as a composite of bounded linear maps. 
This argument is similar to the proof of Proposition~\ref{proposition:MuFromHolomorphicPrefactorizationAlgebraIsHolomorphic} about holomorphicity.
By associativity,
\begin{align*}
  &(\Phi(f))(z',w')\\
  &= L^R M^{B_r(z_1),\ldots,B_r(z_{m}),B_S(z_{m+1})}_{B_R(0)} (\rho_1(z'_1)(P_sa_1),\ldots,\rho_m(z'_m)(P_sa_m)),\\
  &\phantom{L^R M()} \rho_{m+1}(z'_{m+1})(f(M^{B_r(w_1),\ldots,B_r(w_n)}_{B_{T_1}(0)}(\tau_1(w'_1)(P_sb_1),\ldots,\tau_n(w'_n)(P_sb_n))))\displaycomma
\end{align*}
where the map
\labelledMapMapsto{\rho_i}{B_s(z_i)}{\BVS(F(B_s(0)),F(B_r(z_i)))}{z'_i}{M^{B_s(z'_i)}_{B_r(z_i)}\after \sigma^{B_s(0)}_{(+z'_i)}}
is holomorphic for~$i=1,\ldots, m$, and so are
\labelledMapMapsto{\rho_{m+1}}{B_{S-T_2}(z_{m+1})}{\BVS(F(B_{T_2}(0)),F(B_{S}(z_i)))}{z'_{m+1}}{M^{B_{T_2}(z'_{m+1})}_{B_S(z_{m+1})}\after\sigma^{B_{T_2}(0)}_{(+z'_{m+1})}}
and
\labelledMapMapsto{\tau_j}{B_s(w_j)}{\BVS(F(B_s(0)),F(B_r(w_j)))}{w'_j}{M^{B_s(w'_j)}_{B_r(w_j)}\after\sigma^{B_s(0)}_{(+w'_j)}}
for~$j=1,\ldots, n$.
Thus the map
\MapMapsto{\prod^n_{j=1} B_s(w_j)}{\overline{\bigotimes}^n_{j=1} F(B_r(w_j))}{(w'_1,\ldots, w'_n)}{\tau_1(w'_1)(P_sb_1) \otimes \ldots \otimes \tau_n(w'_n)(P_sb_n)}
is holomorphic.
Postcomposing with~$M^{B_r(w_1),\ldots,B_r(w_n)}_{B_{T_1}(0)}$ 
and~$f$ defines a bounded linear map
\[
 \BVS(F(B_{T_1}(0)),F(B_{T_2}(0)) \longrightarrow \mathcal{O}\left(\prod^n_{j=1} B_s(w_j); F(B_{T_2}(0))\right)\displayperiod
\]
Taking the external product with
\[
  \rho_{m+1} \in \mathcal{O}(B_{S-T_2}(z_{m+1}); \BVS(F(B_{T_2}(0)),F(B_{S}(z_i))))
\]
is a bounded linear map with target 
\[
  \mathcal{O}\left(B_{S-T_2}(z_{m+1})\times \prod^n_{j=1} B_s(w_j)\ ;\ \BVS\left(F(B_{T_2}(0)),F(B_{S}(z_i))\right) \barotimes F(B_{T_2}(0))\right)
\] 
from which we can postcompose with the evaluation pairing to get an element of
\[
  \mathcal{O}(B_{S-T_2}(z_{m+1})\times \prod^n_{j=1} B_s(w_j); F(B_{S}(z_i)))\displayperiod
\]
Again using the holomorphicity of~$F$, the map
\MapMapsto{\prod^m_{i=1} B_s(z_i)}{\overline{\bigotimes}^m_{i=1} F(B_r(z_i))}{(z'_1,\ldots, z'_m)}{\rho_1(z'_1)(P_sb_1) \otimes \ldots \otimes \rho_m(z'_m)(P_sb_m)}
is holomorphic, and taking the external product with it is  a bounded linear map with target 
\[
  \mathcal{O}(U;\overline{\otimes}^m_{i=1} F(B_r(z_i)) \otimes F(B_{S}(z_i)))\displayperiod
\]
Postcomposing with~$L^R \after M^{B_r(z_1),\ldots,B_r(z_{m}),B_S(z_{m+1})}_{B_R(0)}$ gives
\(
  \Phi(f) \in \mathcal{O}(U; \overline{\mathbf{V}F}).
\)
\end{proof}
\begin{definition}
Let~$F$ be a holomorphic affine-linearly invariant factorization algebra on~$\C$.
We say that~$F$ has discrete weight spaces if~$\mathbf{V}F_k$ is discrete for all~$k \in \Z$. 
If~$F$ has discrete weight spaces, we say that~$F$ has \emph{meromorphic operator product expansion (OPE)} if~$(\mathbf{V}F, \mu)$ satisfies the meromorphicity axiom of a geometric vertex algebra (see Definition~1.3 of \cite{GeometricVertexAlgebras}), where~$\mu$ is the sequence of multiplication maps for~$\mathbf{V}F$ constructed above.
\end{definition} 
We sometimes call holomorphic prefactorization algebras with meromorphic OPE \emph{meromorphic prefactorization algebras}, with the caveat that, in contrast to the language for functions, a meromorphic prefactorization algebra is a special kind of holomorphic prefactorization algebra. 
\begin{proposition}
If~$F$ is a holomorphic prefactorization algebra on~$\C$ with discrete weight spaces and meromorphic OPE, then~$(\mathbf{V}F, \mu)$ is a geometric vertex algebra.
\end{proposition}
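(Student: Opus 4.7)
The plan is to assemble the axioms of a geometric vertex algebra one by one from the propositions already established in this subsection, together with the standing discreteness and meromorphicity hypotheses. Since the homogeneous components $F(0)_k$ are assumed discrete, the direct sum $\mathbf{V}F = \bigoplus_{k\in\Z} F(0)_k$ is a genuine $\Z$-graded vector space, giving the underlying datum. The $m$-ary multiplication maps $\mu^z : (\mathbf{V}F)^{\otimes m} \to \overline{\mathbf{V}F}$ were constructed explicitly, and each axiom is handled by a separate proposition.

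Concretely, first I would invoke Proposition~\ref{proposition:MuFromHolomorphicPrefactorizationAlgebraIsHolomorphic} to conclude that $\mu(a,z)$ is a holomorphic function of $z \in \C^m \setminus \Delta$ with values in $\overline{\mathbf{V}F}$, so that $\mu$ is a well-defined map $(\mathbf{V}F)^{\otimes m} \to \mathcal{O}(\C^m \setminus \Delta; \overline{\mathbf{V}F})$. Next, insertion-at-zero is exactly Proposition~\ref{proposition:MuFromHolomorphicPrefactorizationAlgebraSatisfiesInsertionAtZero}. The $\C^\times$-equivariance axiom is the statement of the equivariance proposition proved above. Associativity on the locus $\mathcal{A}_{m,n}$ is the content of Proposition~\ref{proposition:MuFromHolomorphicPrefactorizationAlgebraIsAssociative}.

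Two small additional items remain. First, the symmetry (permutation covariance) of $\mu$ in the pairs $(a_i, z_i)$: this follows directly from the compatibility-with-braiding clause in the definition of a prefactorization algebra, because the $m$-ary multiplication maps $M^{U_1,\ldots,U_m}_V$ are symmetric under simultaneous permutation of the $U_i$ and their factors in the tensor product, and the ingredients $P_{r_i}$, $\sigma^{B_{r_i}(0)}_{(+z_i)}$, $L^R$ appearing in the formula for $\mu^z$ do not mix the arguments. Second, meromorphicity of $\mu$ is built into the hypothesis that $F$ has meromorphic OPE.

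The only step that required real work was associativity, and that has already been carried out. Thus the proof amounts to citing the four propositions in this subsection, invoking the braiding axiom for symmetry, and quoting the definition of meromorphic OPE for the final axiom; there is no additional obstacle beyond assembling these ingredients.
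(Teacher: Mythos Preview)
Your proposal is correct and follows essentially the same approach as the paper's own proof: cite the previously established propositions for holomorphicity, insertion-at-zero, $\C^\times$-equivariance, and associativity; deduce permutation invariance from the compatibility of the prefactorization algebra's multiplication with the braiding; and observe that meromorphicity is precisely the hypothesis of meromorphic OPE. The paper's version is simply terser, bundling the first four items into the phrase ``we have already checked all the other axioms.''
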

\begin{proof}
Permutation invariance holds because the multiplication maps in a prefactorization algebra are compatible with the braiding.
We have already checked all the other axioms except meromorphicity which we now assume.
\end{proof}
Recall from 
Proposition~2.3 
 of~\cite{GeometricVertexAlgebras}
 that~$\mathbf{V}F$ is meromorphic if it is bounded from below.
Being bounded from below is the assumption made in~\cite{CostelloGwilliam} when constructing a vertex algebra from a prefactorization algebra.
While less general, this assumption has the pleasing feature that it does not make reference to the multiplication maps of the prefactorization algebra. 
\Subsection{From Geometric Vertex Algebras to Prefactorization Algebras}\label{subsection:FromGeometricVertexAlgebrasToPrefactorizationAlgebras}
Let~$\V$ be a geometric vertex algebra.
We construct a holomorphic prefactorization algebra~$\mathbf{F}\V$ on~$\C$. 
In Section~\ref{subsection:BackToGeometricVertexAlgebras}, we prove that~$\mathbf{F}\V$ has meromorphic OPE and discrete weight spaces by relating it to~$\V$. 
We prove that~$\mathbf{F}\V$ is a factorization algebra in Section~\ref{section:FactorizationAlgebras}.

The precosheaf~$\mathbf{F}\V$ is a quotient of the precosheaf~$\mathbf{E}\V$ of \emph{expressions} with labels in~$\V$, which we define first. It only makes use of the underlying~$\Z$-graded vector space of~$\V$ and not of its multiplication maps.
\begin{definition}\label{definition:E}
Let~$\V$ be a~$\Z$-graded vector space.
 We define a precosheaf on~$\C$ with values in the category of complete bornological spaces. 
Let~$U\subseteq \C$ be open.
The precosheaf~$\mathbf{E}\V$ of \emph{expressions} is defined by
\[
\mathbf{E}\V(U) = \bigoplus_{m\geq 0} (\mathcal{O}'(U^m \setminus \Delta) \otimes \V^{\otimes m})_{\Sigma_m}\displaycomma
\]
which is complete by the following proposition. 
Here, the action of~$\sigma \in \Sigma_m$ on~$\mathcal{O}'(U^m \setminus\Delta) \otimes \V^{\barotimes m}$ is given by
\[
\sigma . (\alpha \otimes a_1 \otimes \ldots \otimes a_m) =\sigma_*\alpha \otimes a_{\sigma(1)} \otimes \ldots \otimes a_{\sigma(n)}\displayperiod
\] 
The extension maps of~$\mathbf{E}\V$ are the pushforward of analytic functionals, dual to restriction of functions, and the identity on the~$\V^{\otimes m}$.
\end{definition}
If we have a fixed~$\V$ in mind, we also write~$E$ for~$\mathbf{E}\V$.
\begin{proposition}\label{proposition:EofUisComplete}
  ~$E(U)$ is complete for all open~$U \subseteq \C$. 
\end{proposition}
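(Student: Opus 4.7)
The plan is to build $E(U)$ by assembling completeness facts already recorded in the preceding discussion, working from the inside out.

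First, I would note that $\mathcal{O}(U^m\setminus\Delta)$ is a locally convex topological vector space, so its bornological dual $\mathcal{O}'(U^m\setminus\Delta)$ is automatically a complete bornological vector space. Next, since $\V$ is merely a $\Z$-graded vector space, I would equip each graded component with its discrete bornology; then $\V$, and hence $\V^{\otimes m}$, is a direct sum of one-dimensional bornological vector spaces and is therefore discrete. This is the key structural observation, because the text records that tensoring with a discrete bornological vector space preserves completeness. Concretely, choosing a basis of $\V^{\otimes m}$ identifies $\mathcal{O}'(U^m\setminus\Delta)\otimes\V^{\otimes m}$ with a direct sum of copies of $\mathcal{O}'(U^m\setminus\Delta)$, which is complete because direct sums of complete bornological vector spaces are complete.

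Then I would take $\Sigma_m$-coinvariants: since $\Sigma_m$ is a finite group acting on a complete space, the text tells us that the quotient is again complete (identifying coinvariants with invariants via the averaging projector, which makes sense over~$\C$). Finally, the outer direct sum over $m\geq 0$ is a direct sum of complete bornological vector spaces, so it is complete, giving completeness of $E(U)$.

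I do not expect any serious obstacle; the step that deserves a sentence of care is that the unadorned tensor product in the definition of $E(U)$ is the bornological tensor product $\otimes$ rather than the completed $\barotimes$, and it needs to be complete without further completion. This is exactly where discreteness of $\V^{\otimes m}$ is used: one can either invoke the general preservation-of-completeness statement or note directly that the bornological tensor product with a direct sum of one-dimensional spaces is, by compatibility of $\otimes$ with direct sums in each variable, just a direct sum of copies of the other factor.
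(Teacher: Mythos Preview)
Your proof is correct and follows essentially the same approach as the paper: completeness of duals of locally convex spaces, preservation of completeness under tensoring with the discrete space~$\V^{\otimes m}$, completeness of quotients by finite group actions, and completeness of direct sums. Your added remark explaining why the uncompleted tensor product suffices is a welcome clarification of the point the paper handles by citing the discreteness of~$\V$.
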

\begin{proof}
Analytic functionals are complete as the dual of a locally convex topological vector space. 
The tensor products are again complete because taking the bornological tensor product with a discrete space preserves completeness and~$\V$ is defined to be discrete. 
  The quotient of a complete space by the action of a finite group is complete, too, so~$(\mathcal{O}'(U^n \setminus \Delta) \otimes \V^{\otimes n})_{\Sigma_n}$ is complete.
  Infinite direct sums of complete spaces are again complete, 
  finishing our argument that~$E(U)$ is complete.
\end{proof}
We now turn~$E$ into a prefactorization algebra.
To multiply~$\alpha \in \mathcal{O}'(U^m \setminus \Delta)$ and~$\beta \in \mathcal{O}'(V^n \setminus \Delta)$ for~$U\sqcup V \subseteq W$, we use the external product of functionals, see Equation~\ref{equation:ExternalProductOfAnalyticFunctionals}, to get~$\alpha \times \beta \in \mathcal{O}'((U^m \setminus \Delta) \times (V^n \setminus \Delta))$ which we can then pushforward to an element~$\alpha\beta \in \mathcal{O}'((U\sqcup V)^n \setminus \Delta)$ along the embedding of~$(U^m \setminus \Delta) \times (V^n \setminus \Delta)$ into~$(U\sqcup V)^{m+n} \setminus \Delta$
induced by the associativity isomorphism~$\C^m \times \C^n \cong \C^{m+n}$.  
This embedding is well-defined because~$U$ and~$V$ are disjoint. 
\begin{proposition}
If~$\V$ is a~$\Z$-graded vector space, then~$\mathbf{E}\V$ is a prefactorization algebra on~$\C$ with values in the symmetric monoidal category of complete bornological spaces with the underlying precosheaf of Definition~\ref{definition:E} whose multiplication maps are
\labelledMapMapsto{\mu^{U,V}_W}
{E\V(U) \otimes E\V(V)}{E\V(W)}{ [\alpha \otimes a] \otimes [\beta \otimes b]}{[\alpha\beta \otimes a \otimes b ]}
where~$\alpha \otimes a \in \mathcal{O}'(U^m \setminus\Delta) \otimes \V^{\barotimes m}$ and~$\beta \otimes b \in \mathcal{O}'(V^n \setminus\Delta) \otimes \V^{\barotimes n})$.
Its unit is~$1$ in the summand corresponding to~$m=0$ which can be identified with~$\C$ in a natural way and which is present for~$U=\emptyset$, too. 
\end{proposition}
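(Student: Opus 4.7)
The plan is to check the five items entailed by the definition of a prefactorization algebra: (i) the stated formula descends to the coinvariants and assembles to a well-defined bounded linear map on the infinite direct sum, (ii) associativity, (iii) unitality, (iv) compatibility with the braiding, and (v) the precosheaf structure was already given in Definition~\ref{definition:E}. Throughout I use that the external product of analytic functionals (Equation~\ref{equation:ExternalProductOfAnalyticFunctionals}) and the pushforward of analytic functionals along an open embedding are bounded, and that tensoring with the discrete complete bornological space~$\V^{\otimes m}$ preserves boundedness.

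For well-definedness, I first observe that for disjoint open $U, V \subseteq W$, the coproduct map $\C^m \times \C^n \cong \C^{m+n}$ restricts to an open embedding $j : (U^m \setminus \Delta) \times (V^n \setminus \Delta) \hookrightarrow (U \sqcup V)^{m+n} \setminus \Delta$, well-defined because $U$ and $V$ are disjoint. Extension along $U \sqcup V \subseteq W$ then pushes forward into $(W^{m+n} \setminus \Delta)$. The composite $\alpha \otimes \beta \mapsto j_*(\alpha \times \beta)$ is $(\Sigma_m \times \Sigma_n)$-equivariant relative to the natural inclusion $\Sigma_m \times \Sigma_n \hookrightarrow \Sigma_{m+n}$, so the prescription descends through both coinvariants on the source and lands in the $\Sigma_{m+n}$-coinvariants on the target. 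Tensoring with the identity on $\V^{\otimes m} \barotimes \V^{\otimes n} \cong \V^{\otimes(m+n)}$ is similarly equivariant. Boundedness in each bi-degree $(m,n)$ follows from the boundedness of external product, pushforward, and the quotient, and summing over $m, n$ gives a bounded linear map from $E\V(U) \otimes E\V(V)$ into $E\V(W)$ because the completed tensor product commutes with direct sums in each variable, and the resulting map assembles from its bi-degree components.

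Associativity reduces to the associativity of the external product of analytic functionals combined with the commutativity of the diagram of open embeddings built from $\C^{m_1} \times \C^{m_2} \times \C^{m_3} \cong \C^{m_1+m_2+m_3}$; both composites in the associativity hexagon send $[\alpha_1 \otimes a_1] \otimes [\alpha_2 \otimes a_2] \otimes [\alpha_3 \otimes a_3]$ to $[j_*(\alpha_1 \times \alpha_2 \times \alpha_3) \otimes a_1 \otimes a_2 \otimes a_3]$ inside the $\Sigma_{m_1+m_2+m_3}$-coinvariants. Unitality is immediate: the unit $\C \to E\V(\emptyset)$ sends $1$ to the element $1$ of the $m=0$ summand (which is $\mathcal{O}'(\emptyset^0 \setminus \Delta) \otimes \V^{\otimes 0} \cong \C$), and multiplying by this element acts via the identification $\mathcal{O}'(U^m \setminus \Delta) \times \C \cong \mathcal{O}'(U^m \setminus \Delta)$ under the external product.

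For braiding compatibility, the swap $[\alpha \otimes a] \otimes [\beta \otimes b] \mapsto [\beta \otimes b] \otimes [\alpha \otimes a]$ followed by $M^{V,U}_{U \sqcup V}$ produces $[j'_*(\beta \times \alpha) \otimes b \otimes a]$, where $j'$ is the embedding that places the $V$-coordinates first. The block permutation $\sigma \in \Sigma_{m+n}$ swapping the first $m$ and last $n$ entries intertwines $j$ and $j'$ and identifies $\beta \times \alpha \otimes b \otimes a$ with $\sigma.(\alpha \times \beta \otimes a \otimes b)$, so the two elements coincide in the $\Sigma_{m+n}$-coinvariants. The main obstacle, and really the only technical point in this argument, is the bookkeeping to confirm that $j$ is an open embedding of the correct subset and that all identifications track the $\Sigma_{m+n}$-action correctly through the quotient; once this is set up, every axiom reduces to equivariance of the external product and pushforward.
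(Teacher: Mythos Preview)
Your proof is correct and follows essentially the same route as the paper's: both reduce everything to the external product of analytic functionals and its compatibility with pushforward along the embeddings $j$, and both handle braiding via the block permutation in $\Sigma_{m+n}$. The only place the paper is more explicit is in the braiding step, where it verifies the identity $\tau_*(\alpha\times\beta)=\beta\times\alpha$ by testing against the dense set of product functions $h(y)g(x)$; you assert this identification without that check, but it is a standard property of the external product and not a gap.
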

\begin{proof}
The external product of functionals is compatible with pushforwards along complex-analytic embeddings. 
This implies that the multiplication maps for~$E$ are well-defined w.\,r.\,t.\ the permutation actions.
Taking external product of functionals is a bounded linear map and the braiding of~$\CBVS$ is as well, so the multiplication maps of~$E$ are bounded linear maps.
Unitality and associativity follow from the behavior of the external product of functionals.

Compatibility with the braiding is enforced by modding out the action of the permutation groups, let~$\tau\in \Sigma_{m+n}$ be the permutation moving the first~$m$ elements past the last~$n$ elements: 
We want to show that
\begin{align*}
\beta\alpha(f) &= \beta(y \mapsto \alpha(x \mapsto f(y,x)))\\
\intertext{is equal to}
\tau_*(\alpha\beta)(f)&= \alpha\beta(\tau^*f)= \alpha(x \mapsto \beta(y \mapsto \tau^*f(x,y)))= \alpha(x \mapsto \beta(y \mapsto f(y,x)))
\end{align*}
for
\begin{align*}
&f \in \mathcal{O}((U\sqcup V)^{m+n} \setminus\Delta)\\
&\alpha \in \mathcal{O}'(U^m \setminus\Delta),\quad \beta \in \mathcal{O}'(V^n \setminus\Delta)\displayperiod
\end{align*}
The functionals~$\beta\alpha$ and~$\tau_*(\alpha\beta)$ are the pushforwards of
\begin{align*}
  &\beta\times\alpha, \tau_*(\alpha\times \beta) \in \mathcal{O}'((V^n \setminus \Delta) \times (U^m \setminus \Delta))\displayperiod
\end{align*}
Thus, it suffices to check the desired equality on the dense subset of holomorphic functions~$f$ which are a finite sum of functions~$h(y)g(x)$ of~$x \in U^m \setminus \Delta$ and~$y\in V^n \setminus \Delta$ for~$g$ and~$h$ holomorphic.
For~$f(y,x) = h(y)g(x)$, 
\[
  \beta\times \alpha (f) = \alpha(g)\beta(h) = (\tau_*(\alpha \times\beta))(f) \displayperiod
\]
To identify the underlying precosheaf of~$E$ with the precosheaf defined earlier, we note that multiplying with the unit in~$\C \cong \mathcal{O}'(\mathrm{pt}) = \mathcal{O}'(\emptyset^0 \setminus \Delta_0)$ is the pushforward of analytic functionals.
\end{proof}
Now using the multiplication maps of~$\V$, we define the \emph{evaluation map} 
\[
\ev_U: \mathbf{E}\V(U) \rightarrow \Vbarbb  
\]
for~$U \subseteq \C$ open.
Let~$\alpha \otimes a \in \mathcal{O}'(U^m \setminus \Delta) \otimes \V^m$. 
As part of the structure of a geometric vertex algebra, we have a holomorphic function~$\mu(a): U^m \setminus \Delta \rightarrow \Vbar$.
We would like to evaluate~$\alpha$ on it to obtain an element of~$\Vbar$.
Let~$k\in \Z$. The holomorphic function~$p_k \after \mu(a)$ takes values in a finite-dimensional subspace~$F \subseteq \V_k$. 
The evaluation pairing tensored with~$F$ gives a map ($X := U^m \setminus \Delta$)
\[
\mathcal{O}'(X) \otimes \mathcal{O}(X;F) \cong \mathcal{O}'(X) \otimes \mathcal{O}(X) \otimes F \rightarrow \C \otimes F \cong F 
\]
so we define the evaluation of~$\alpha$ on~$p_k \after \mu(a)$ to be the image of~$\alpha \otimes (p_k \after \mu(a))$ under this map to get an element of~$\V_k$ for all~$k \in \Z$. 
Let~$\alpha(\mu(a))$ denote the resulting element of~$\Vbar$.
What we have just described more generally applies to any open~$X \subseteq \C^m$ and any holomorphic function~$f \in \mathcal{O}(X;\Vbar)$ and~$\alpha \in \mathcal{O}'(X)$ and defines~$\alpha(f) \in \Vbar$, where~$\V$ is a~$\Z$-graded vector space. 
It is clear that~$\alpha(f)$ depends linearly and boundedly on~$\alpha \otimes f$, by the universal property of the product~$\Vbar$ this means linearity and boundedness in each component. 
It follows that~$\alpha(\mu(a))$ depends linearly and boundedly on~$\alpha \otimes a$, recall that~$\V^{\otimes m}$ is given the discrete bornology. 
If~$(e_i)_i$ is a basis of a finite-dimensional subspace~$F$ in which~$p_k \after f$ takes values for some~$k\in \Z$, then
\[
  \alpha(f)_k = \sum_i \alpha(e^*_i(p_k \after f))e_i\displayperiod
\]
The permutation invariance of~$\mu$ and the universal property of the direct sum give a well-defined and bounded linear map 
\labelledMapMapsto{\ev_U}{\mathbf{E}\V(U)}{\Vbar}{ [\alpha\otimes a] }{\alpha(\mu(a))\displayperiod}
The collection of the~$\ev_U$ for~$U\subseteq \C$ open defines a map of precosheaves from~$E$ to the constant precosheaf~$\const \Vbar$.
\begin{definition}
  The precosheaf~$R$ on~$\C$ is defined as the kernel of~$\ev: E(U)\rightarrow \const \overline{\mathcal{V}}$, that is,
  \[
    R(U) = \ker \ev_U \subseteq E(U)
  \]
  for~$U\subseteq \C$ open.
  We call the elements of~$R(U)$~\emph{relations} on~$U$. 
\end{definition}
\begin{remark}\label{remark:ClosednessOfRelations}
  Note that~$R(U) \subseteq E(U)$ is b-closed as the kernel of a bounded linear map since such maps are continuous w.\,r.\,t.\ the b-topology. 
\end{remark}
The multiplication on~$E$ does not induce a multiplication on~$E/R$ in general.
\begin{example}\label{example:NoMultiplicationUsingGlobalRelations} 
  We consider the case that there are~$a,b\in \mathcal{V}$ s.\,t.~$a_{(m)}b \neq 0$ for some~$m\geq 0$.
  For example, this is the case for~$\V$ the free boson (also known as the Heisenbeg vertex algebra),~$a=b$ the generator and~$m=1$.
  Let
  \begin{align*}
  U_1 &= B_2(0) \setminus \overline{B_1(0)}\\
  U_2 &= B_1(0)\\
  W &= B_2(0) 
  \end{align*}
  so that we have~$U_1\cap U_2 = \emptyset$ and~$U_1,U_2 \subseteq W$. 
  Our goal is to show that there is no dotted arrow making
  \begin{ctikzcd}
  E(U_1) \otimes E(U_2)                 \dar{} \rar{} & E(W) \dar{} \\
  E(U_1)/R(U_1) \otimes E(U_2)/R(U_2)        \arrow[r,dashrightarrow] & E(W)/R(W)
  \end{ctikzcd}
  commute, where the left map is the tensor product of the quotient maps and the right map is the quotient map.
  
  Let~$\alpha \in \mathcal{O}'(U_1)$ and~$\beta \in \mathcal{O}'(U_2)$ be the analytic functionals
  \[
  \alpha : h \in \mathcal{O}(U_1) \mapsto \int_{\frac{3}{2} S^1} z^m h(z)dz 
  \]
  resp.
  \[
  \beta: h \in \mathcal{O}(U_2) \mapsto h(0)\displayperiod
  \]
  Then~$\alpha\otimes a$ and~$\beta\otimes b$ represent classes~$x \in E(U_1)$ resp.~$y\in E(U_2)$.
  The image of~$x\otimes y$ in~$E(W)$ under~$M^{U_1,U_2}_W$ is represented by $\alpha\beta \otimes a \otimes b$, where
  \[
    \alpha\beta : h \in \mathcal{O}(W^2 \setminus \Delta) \mapsto \alpha[z \mapsto \beta[w \mapsto h(z,w)]] = \int_{\frac{3}{2} S^1} z^m h(z,0)dz\displayperiod
  \]
  The image of this class in~$E(W)/R(W)$ is non-zero because this space maps injectively to~$\Vbar$ by construction and the image there is
  \[
  \ev_{W}(M^{U_1,U_2}_W([x\otimes y])) = \int_{\frac{3}{2}S^1} z^m\mu(a,z,b,0) dz = a_{(m)} b \neq 0\displayperiod
  \] 
  If there were a dotted arrow making the diagram commute, then the lower composition would give zero since the image of~$x$ in~$E(U_1)/R(U_1)$ is zero because
  \[
  ev_{U_1}(x) = \int_{\frac{3}{2}S^1} z^m\mu(a,z) dz = 0
  \]
  since the integrand is defined on~$\C$ and holomorphic.
\end{example}
This example suggests that we only impose those relations among elements of~$E(U)$ which are visible on simply-connected subsets. 
We choose to focus on round discs.  
\begin{definition}
A \emph{(round, open) disc} is a subset~$D \subseteq \C$ which is equal to
\[
B_r(z) = \{ w \in \C \mid |z - w| < r \}
\] 
for some~$z \in \C$ and some real number~$r > 0$.
\end{definition}
\begin{definition}\label{definition:F}
The multiplication maps of~$E$ assemble to give a map
\[
\bigoplus_{d,W} R(d) \otimes E(W) \longrightarrow E(U)
\]
where the direct sum runs over all discs~$d\subseteq U$ and open subsets~$W\subseteq U$ s.\,t.~$d\cap W = \emptyset$.
Let~$R^{disc}(U) \subseteq E(U)$ be the image of this map,
\[
R^{disc}(U) = \im \left( \bigoplus_{d,W} R(d)\otimes E(W)  \longrightarrow E(U) \right)\displayperiod
\]
We call~$R^{disc}$ the precosheaf of \emph{relations on discs}.  
Let~$F$ be the precosheaf of complete bornological vector spaces defined by
\[
  F(U)=E(U)/\overline{R^{disc}(U)}
\]
for~$U \subseteq \C$ open. 
If we want to make the dependence on~$\V$ explicit, we use the notation~$\mathbf{F}\V$ for~$F$.
\end{definition}
Note that~$F(U)$ is complete because~$E(U)$ is complete by Proposition~\ref{proposition:EofUisComplete} and because we are modding out by the b-closure of~$R^{disc}(U)$. 
The author does not know if~$R^{disc}(U)$ is b-closed or not. If~$U$ is a disc, then~$R^{disc}(U) = R(U)$ by Lemma~\ref{lemma:RelationsOnDisc} further below, and~$R(U)$ is b-closed as the kernel of a bounded map.
To prove that~$F$ inherits multiplication maps from~$E$, we slightly rephrase its definition so that it only refers to complete bornological spaces.
Let~$U\subseteq \C$ be open. 
The source of the map 
\[
\bigoplus_{d,W} R(d) \otimes E(W) \longrightarrow E(U)
\]
as in the definition of~$R^{disc}(U)$ is not complete in general (e.\,g.\,~$\V = \C$,~$W$ non-empty),  
but
\[
  S^{disc}(U) := \bigoplus_{d,W} R(d) \barotimes E(W)\displaycomma
\]
with~$d,W$ as above, is a completion. 
The space of relations~$R(U) \subseteq E(U)$ is b-closed, because it is the intersection of the kernels of the components of~$\ev_U$, each of which is a bounded map, see its construction.
Since~$E(d)$ is complete, its b-closed subspace~$R(d)$ is complete, too.
\begin{proposition}\label{proposition:FAsCokernelOfEByS}
  The quotient map~$E(U) \rightarrow F(U)$ is a cokernel of the map
  \[
    S^{disc}(U) \rightarrow E(U)
  \] 
  given by the sum of the multiplication maps.
\end{proposition}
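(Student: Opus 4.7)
The plan is to apply the description of cokernels in~$\CBVS$: for a bounded linear map $f : X \rightarrow Y$ between complete bornological spaces, the cokernel is $Y / \overline{f(X)}$, as recalled in Subsection~\ref{subsection:FunctionalAnalyticPrelim}. Hence it suffices to verify that the image $\phi(S^{disc}(U))$ of the given map~$\phi : S^{disc}(U) \rightarrow E(U)$ has b-closure equal to~$\overline{R^{disc}(U)}$.

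First I would clarify that~$\phi$ is well-defined and bounded. On each summand, the algebraic multiplication $R(d) \otimes E(W) \rightarrow E(U)$ is the restriction of the prefactorization multiplication $E(d) \otimes E(W) \rightarrow E(U)$, which is bounded. Since $E(U)$ is complete (Proposition~\ref{proposition:EofUisComplete}), the universal property of the completed tensor product extends this to a bounded map $R(d) \barotimes E(W) \rightarrow E(U)$; the universal property of the direct sum in~$\CBVS$ then assembles these to the bounded map~$\phi$.

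Next I would compute the b-closure of~$\phi(S^{disc}(U))$. The algebraic direct sum $\bigoplus_{d,W} R(d) \otimes E(W)$ sits inside $S^{disc}(U) = \bigoplus_{d,W} R(d) \barotimes E(W)$ as a b-dense subspace, since each algebraic tensor product $R(d) \otimes E(W)$ is b-dense in its completion $R(d) \barotimes E(W)$ and b-density is preserved under direct sums. Therefore $\phi(S^{disc}(U))$ contains~$R^{disc}(U)$ (the image of the algebraic direct sum) and, because the bounded map~$\phi$ is sequentially continuous w.\,r.\,t.\ the b-topology, it is contained in $\overline{R^{disc}(U)}$. Taking b-closures of the inclusions
\[
  R^{disc}(U) \subseteq \phi(S^{disc}(U)) \subseteq \overline{R^{disc}(U)}
\]
yields $\overline{\phi(S^{disc}(U))} = \overline{R^{disc}(U)}$, as required.

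Finally, combining the two previous steps, the cokernel of~$\phi$ in $\CBVS$ is
\[
  E(U) / \overline{\phi(S^{disc}(U))} = E(U) / \overline{R^{disc}(U)} = F(U)\displaycomma
\]
and the cokernel map is precisely the quotient map $E(U) \rightarrow F(U)$. I do not expect genuine obstacles here; the only point that requires care is the b-density of the algebraic tensor product in the completed tensor product and the compatibility of b-closures with bounded maps, both of which are standard facts about complete bornological spaces recalled in Subsection~\ref{subsection:FunctionalAnalyticPrelim}.
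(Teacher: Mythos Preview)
Your proposal is correct and follows essentially the same approach as the paper: both reduce to showing $\overline{\phi(S^{disc}(U))} = \overline{R^{disc}(U)}$ via the inclusions $R^{disc}(U) \subseteq \phi(S^{disc}(U)) \subseteq \overline{R^{disc}(U)}$, using that the algebraic tensor product is b-dense in its completion and that bounded maps are b-continuous. Your version is slightly more detailed in justifying that $\phi$ is well-defined and bounded, but the core argument is identical to the paper's.
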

\begin{proof}
  This amounts to~$\overline{R^{disc}(U)} = \overline{I}$ where~$I$ is the image of~$S^{disc}(U)$ in~$E(U)$.
  Let~$d,W$ be as above. 
  The algebraic tensor product~$R(d) \otimes E(W)$ maps to its completion~$R(d) \barotimes E(W)$, so~$R^{disc}(U) \subseteq I$, hence~$\overline{R^{disc}(U)} \subseteq \overline{I}$.
  Every algebraic tensor product has dense image in its completion, implying that~$\overline{R^{disc}(U)} \supseteq I$ and~$\overline{R^{disc}(U)}\supseteq \overline{I}$.
\end{proof}
\begin{proposition}\label{proposition:FIsAPrefactorizationAlgebra}
  $F$ is a prefactorization algebra on~$\C$ with its multiplication maps induced from those of~$E$.
  The multiplication maps of~$E$ induce unique multiplication maps on~$F$, also denoted~$M$, such that the square
  \begin{ctikzcd}
     E(U) \barotimes E(V) \rar{M^{U,V}_X} \dar{} & E(X) \dar{} \\
     F(U) \barotimes F(V) \rar{M^{U,V}_X}  & F(X) 
  \end{ctikzcd}
  commutes for all~$X \subseteq \C$ open and~$U,V\subseteq X$ open and disjoint, where the vertical maps are the completed tensor product of the quotient maps resp.\ the quotient map.
\end{proposition}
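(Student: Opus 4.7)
The plan is to use Proposition~\ref{proposition:FAsCokernelOfEByS}, which identifies~$F(U)$ with the cokernel of~$S^{disc}(U) \to E(U)$, together with the fact that the completed tensor product commutes with colimits in each variable. It then suffices to show that the composite
\[
E(U) \barotimes E(V) \xrightarrow{M^{U,V}_X} E(X) \longrightarrow F(X)
\]
vanishes on the image of $S^{disc}(U) \barotimes E(V)$ and of $E(U) \barotimes S^{disc}(V)$; the induced map $F(U) \barotimes F(V) \to F(X)$ is then the unique one making the square commute.

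By symmetry, I treat only the first case. A generator of $S^{disc}(U)$ has the form $\rho \barotimes \sigma$ with $\rho \in R(d)$ for some disc $d \subseteq U$ and $\sigma \in E(W)$ for some open $W \subseteq U$ with $W \cap d = \emptyset$; its image in $E(U)$ is $M^{d,W}_U(\rho \otimes \sigma)$. For $\tau \in E(V)$, associativity of the prefactorization algebra $E$ (the ternary version from Remark~\ref{remark:PrefactorizationAlgebrasAndMAryOperations}) gives
\[
M^{U,V}_X\bigl(M^{d,W}_U(\rho \otimes \sigma)\otimes \tau\bigr) = M^{d, W \sqcup V}_X\bigl(\rho \otimes M^{W,V}_{W\sqcup V}(\sigma\otimes \tau)\bigr),
\]
where $W \sqcup V$ is open in $X$ and disjoint from the disc $d \subseteq X$. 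Since $\rho \in R(d)$, the right-hand side is by definition an element of $R^{disc}(X)$, hence of $\overline{R^{disc}(X)}$. Thus the composite vanishes on the algebraic tensor product $S^{disc}(U) \otimes E(V)$, viewed inside $S^{disc}(U) \barotimes E(V)$.

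To pass to the completion, I argue by b-closedness: the preimage of $\overline{R^{disc}(X)}$ under the bounded linear map $E(U) \barotimes E(V) \to E(X)$ is b-closed, and I have just shown it contains the image of the algebraic tensor product $S^{disc}(U) \otimes E(V)$. Since the algebraic tensor product is dense in the completed tensor product $S^{disc}(U) \barotimes E(V)$, and the composite into $F(X)$ is bounded, the composite vanishes on all of $S^{disc}(U) \barotimes E(V)$. This gives the factorisation, and the uniqueness comes from the quotient maps having dense image.

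The remaining axioms of a prefactorization algebra for $F$ — unitality, associativity, and compatibility with the braiding — transfer formally from $E$: each equation between multiplication maps in $F$ is the image under the bounded surjections $E(\blank) \twoheadrightarrow F(\blank)$ of the corresponding equation in $E$, and the unit of $F$ is inherited as the image of the unit of $E$ in the summand $m=0$, which has trivial intersection with $\overline{R^{disc}(\emptyset)}$. I do not expect any serious obstacle; the only point requiring care is the density-plus-boundedness argument above, which is what lets us pass from the algebraic relations generating $R^{disc}$ to its b-closure.
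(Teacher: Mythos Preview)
Your proof is correct and follows the same approach as the paper's: use Proposition~\ref{proposition:FAsCokernelOfEByS} together with the fact that~$\barotimes$ preserves cokernels in each variable, then check via associativity of~$E$ that each summand of~$S^{disc}(U)\barotimes E(V)$ lands in~$\overline{R^{disc}(X)}$. The paper phrases this at the level of bounded linear maps---the composite on~$(R(d)\barotimes E(W))\barotimes E(V)$ factors through the summand~$R(d)\barotimes E(W\sqcup V)$ of~$S^{disc}(X)$---which makes your density-plus-b-closedness step unnecessary (associativity in~$\CBVS$ is already an equality of maps on completed tensor products); it also notes explicitly that the case~$E(U)\barotimes S^{disc}(V)$ uses compatibility with the braiding, which your ``by symmetry'' glosses over.
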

\begin{proof}
  Let~$U$ and~$V$ be open and disjoint and contained in an open~$X \subseteq \C$.
  Since cokernels in~$\CBVS$ commute with~$\barotimes$ in both variables separately, Proposition~\ref{proposition:FAsCokernelOfEByS} implies that the map~$E(U) \barotimes E(V) \rightarrow F(U) \barotimes F(V)$ is the cokernel of the multiplication map
  \[
    T := S^{disc}(U) \barotimes E(V) \oplus E(U) \barotimes S^{disc}(V) \rightarrow E(U) \barotimes E(V)\displayperiod
  \]
  For~$Y \subseteq \C$ open, let~$q_Y: E(Y) \rightarrow F(Y)$ denote the quotient map.
  We want to see that~$T$ maps to zero under~$q_X \after M^{U,V}_X$, as it then follows that~$F$ is a prefactorization algebra because~$E$ is a prefactorization algebra. 
  To show that~$S^{disc}(U) \barotimes E(V)$ maps to zero, let~$d,W \subseteq U$ be as in the definition of~$S^{disc}(U)$.
  Using the associativity of~$E$, we see that the map on the summand~$(R(d) \barotimes E(W)) \barotimes E(V)$ factors through the summand~$R(d) \barotimes E(W \sqcup V)$ of~$S^{disc}(X)$.
  To show that~$E(U) \barotimes S^{disc}(V)$ maps to zero, we also use the compatibility of~$M$ with the braiding.
  This time~$d,W \subseteq V$ and the map on the summand~$E(U) \barotimes (R(d) \barotimes E(W))$ factors through the summand~$R(d) \barotimes E(U \sqcup W)$ of~$S^{disc}(X)$.
\end{proof}
To describe~$E$ as an affine-linearly invariant prefactorization algebra on~$\C$, let~$L_0$ denote the grading operator of~$\mathcal{V}$.
This is the endomorphism of~$\mathcal{V}$ defined by~$L_0 a = |a|a$ for~$a$ homogeneous, so that~$\lambda^{L_0}v = \lambda^{|a|}a$ for~$a \in \V$ homogeneous and~$\lambda \in \C^\times$. 
This defines an action of~$\C^\times$ on~$\V$. 
The action of~$\C^\times \ltimes \C$ on~$\V$ is defined in terms of this~$\C^\times$-action via the group homomorphism~$(\lambda,w) \mapsto \lambda$, translations act trivially on~$\V$. 
Therefore~$\mathbf{E}\V$ is an affine-linearly invariant prefactorization algebra where
\[
    \sigma^U_{(\lambda,w)}(\alpha \otimes a_1 \otimes \ldots \otimes a_n) = (\lambda,w)_*(\alpha) \otimes \lambda^{L_0}a_1 \otimes \ldots \otimes \lambda^{L_0}a_n
\]
for~$\alpha \in \mathcal{O}'(U^n \setminus \Delta)$,~$a_1,\ldots, a_n \in \V$, and~$U\subseteq \C$ open.
Here, the pushforward of an analytic functional~$\alpha\in \mathcal{O}'(X)$ along a holomorphic map~$g:X\rightarrow Y$ is 
\[
g_*\alpha := [ f \in \mathcal{O}(Y) \mapsto \alpha(x \mapsto f(g(x)))]\displayperiod  
\]
We now proceed to show that~$\mathbf{F}\V$ is affine-linearly invariant with the maps~$\sigma^U_g$ induced from those of~$\mathbf{E}\V$.
The evaluation maps are natural w.\,r.\,t.\ inclusions because they evaluate a function on~$\C^n \setminus \Delta$ only depending on some vertex algebra elements which are not changed by the extension maps of~$\mathbf{E}\V$.
The evaluation maps take values in~$\Vbarbb$, the set of bounded-below vectors in~$\Vbar$.
Recall that~$\C^\times \ltimes \C$ acts on~$\Vbarbb$
by Proposition~1.4 
of~\cite{GeometricVertexAlgebras}. 
\begin{proposition}\label{proposition:EvaluationIsEquivariant}
  Let~$\V$ be a geometric vertex algebra. 
  The evaluation maps are~$\C^\times \ltimes \C$-equivariant in the sense that the square
  \begin{ctikzcd}
    \mathbf{E}\V(U) \dar{\sigma^U_g} \rar{\ev_U} & \Vbarbb \dar{g}\\
    \mathbf{E}\V(gU) \rar{\ev_{gU}} & \Vbarbb
  \end{ctikzcd}
  commutes for~$U\subseteq \C$ open and~$g \in \C^\times \ltimes \C$. 
  \end{proposition}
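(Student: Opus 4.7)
The plan is to reduce the commutativity of the square to a check on generators $[\alpha \otimes a_1 \otimes \ldots \otimes a_m]$ of each summand of $\mathbf{E}\V(U)$, then unfold both composites explicitly and invoke the $\C^\times \ltimes \C$-equivariance of $\mu$ as a geometric vertex algebra. Because $\mathbf{E}\V(U)$ is built as a direct sum of coinvariants of tensor products and both $\ev_U$ and $\sigma^U_g$ are bounded linear, this reduction is routine.

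Write $g = (\lambda, w)$ and let $gz = (\lambda z_1 + w, \ldots, \lambda z_m + w)$. By the definitions of $\sigma^U_g$ on $\mathbf{E}\V$, of $\ev_{gU}$, and of the pushforward of analytic functionals, the top-then-right composite of the square applied to $[\alpha \otimes a]$ equals
\[
  g_*\alpha\bigl(\mu(\lambda^{L_0}a_1, \ldots, \lambda^{L_0}a_m)\bigr) = \alpha\bigl(z \mapsto \mu(\lambda^{L_0}a_1, \ldots, \lambda^{L_0}a_m)(gz)\bigr),
\]
while the right-then-bottom composite is $g \cdot \alpha(\mu(a_1, \ldots, a_m))$. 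The $\C^\times \ltimes \C$-equivariance of $\mu$ in the geometric vertex algebra $\V$ gives $\mu(\lambda^{L_0}a)(gz) = g \cdot \mu(a)(z)$ in $\Vbarbb$, so the former becomes $\alpha(z \mapsto g \cdot \mu(a)(z))$, and the statement reduces to the commutation $\alpha(g \cdot f) = g \cdot \alpha(f)$ for holomorphic $f : U^m \setminus \Delta \to \Vbarbb$.

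This last commutation is the only nontrivial step. The $\C^\times$-part of the action on $\Vbarbb$ is diagonal on each $\V_k$, so it obviously commutes with $\alpha$ componentwise. For the translation part — which on $\Vbarbb$ acts by an exponential series in the infinitesimal translation operator, mixing lower-degree components into higher ones — I would argue that each component $p_k(g \cdot f)$ is a sum of bounded linear operators applied to the $p_j \circ f$ for $j \leq k$, each of which takes values in a finite-dimensional subspace of $\V_j$, and the sum converges in $\Vbarbb$ because of the bounded-below structure. Linearity and boundedness of $\alpha$, together with the finite-dimensional image property of each $p_j \circ f$, allow one to interchange $\alpha$ with both the operators and the summation, yielding the desired equality.

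The main obstacle is this final interchange, specifically for the translation part: one has to confirm that evaluation against $\alpha$ respects the degree-by-degree convergence of the exponential defining the translation action on $\Vbarbb$. Everything else is formal unfolding of definitions plus the equivariance hypothesis on $\V$.
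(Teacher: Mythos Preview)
Your proof is correct and matches the paper's approach: reduce to generators $[\alpha\otimes a]$, invoke the equivariance of $\mu$ (which the paper splits into the $\C^\times$-equivariance axiom and associativity for $m=0$, i.e.\ translation invariance, handling the two subgroups separately and concluding by generation), and then commute $\alpha$ past the $g$-action on $\Vbarbb$. You overstate the final step, though: since $\mu(a)$ lands in $\Vbarbb$ with a lower bound depending only on $a$, each component $p_k(g\cdot f)$ is a \emph{finite} linear combination of the $p_{k-n}\circ f$, so no convergence argument is needed and the interchange is the pure linearity the paper records in one word.
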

  \begin{proof}
  We check equivariance for translation and multiplication maps separately.
  Let
  \[
    [\alpha \otimes a] \in (\mathcal{O}'(U^m\setminus \Delta) \otimes \V^m)_{\Sigma_m} \subseteq \mathbf{E}\V(U)\displayperiod
  \] 
  If~$g \in \C^\times \ltimes \C$, then
  \begin{align*}
    \ev_{g U}(\sigma^U_g([\alpha \otimes a]))&= \ev_{g U}([g_*\alpha \otimes (\lambda.a)]) = (g_*\alpha)(\mu(\lambda.a))\\
    &= \alpha(z \mapsto \mu(g.a)(g.z))\displayperiod
  \end{align*}
  If~$g \in \C^\times \ltimes \C$ is multiplication with some~$\lambda\in \C^\times$, then
  \begin{align*}
  &\ev_{g U}(\sigma^U_g([\alpha \otimes a]))\\
  &= \alpha(z \mapsto \mu(\lambda.a)(\lambda.z))\\
  &= \alpha(z \mapsto \lambda.\mu(a)(z)) &\quad\quad (\C^\times\text{-equivariance})\\
  &= \lambda . \alpha(\mu(a)) &\quad\quad\text{(linearity)}\\
  &= \lambda . \ev_U([\alpha \otimes a])\displaycomma
  \end{align*}
  and thus
  \[
  g.\ev_U([\alpha \otimes a]) = \ev_{gU}(\sigma_g([\alpha \otimes a]))\displayperiod
  \]
  If~$g \in \C^\times \ltimes \C$ is translation by some~$w\in \C$, then
  \begin{align*}
    &\ev_{gU}(\sigma^U_g([\alpha \otimes a]))\\
    &= \alpha(\mathbf{z} \mapsto \mu(a)(\mathbf{z}+w))\\
    &= \alpha(\mathbf{z} \mapsto g.\mu(a)(\mathbf{z})) &\quad\quad \text{(associativity for } m=0\text{)}\\
    &= g.\alpha(\mu(a)) &\quad\quad \text{(linearity)}\\
    &= g. \ev_U([\alpha \otimes a])\displayperiod
  \end{align*}
  Equivariance follows because~$\C^\times \ltimes \C$ is generated by the union of the two subgroups corresponding to~$\C^\times$ and~$\C$.
\end{proof}
\begin{proposition}\label{proposition:FIsInvariant}
  $\mathbf{F}\V$ is an affine-linearly invariant prefactorization algebra on~$\C$ with its invariance maps induced from those of~$\mathbf{E}\V$.  
\end{proposition}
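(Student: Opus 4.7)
The plan is to show that the affine-linear action on $\mathbf{E}\V$ descends to the quotient $\mathbf{F}\V(U) = \mathbf{E}\V(U)/\overline{R^{disc}(U)}$. Since the $\sigma^U_g$ on $\mathbf{E}\V$ are bounded linear isomorphisms (in particular continuous with respect to the b-topology), it is enough to verify that $\sigma^U_g$ sends $R^{disc}(U)$ into $R^{disc}(gU)$; the closure will then be preserved automatically, and the induced maps on the quotients will satisfy the axioms of a $G$-invariant precosheaf because these axioms hold upstairs in $\mathbf{E}\V$.

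The first step is to note that an affine-linear transformation $g = (\lambda, w) \in \C^\times \ltimes \C$ sends a round disc $d = B_r(z)$ to the round disc $gd = B_{|\lambda|r}(\lambda z + w)$, and sends an open subset $W$ disjoint from $d$ to an open subset $gW$ disjoint from $gd$. So it suffices to check that, for each such disc $d \subseteq U$ and each $W \subseteq U$ with $d \cap W = \emptyset$, the composite
\[
  R(d) \otimes E(W) \longrightarrow E(U) \overset{\sigma^U_g}{\longrightarrow} E(gU)
\]
lands in the image of $R(gd) \otimes E(gW) \to E(gU)$. Since $\mathbf{E}\V$ is already $G$-invariant, the multiplication maps commute with the $\sigma$'s (Remark~\ref{remark:InvariantPrefactorizationAlgebrasAndMAryOperations}), so the above composite factors as
\[
  R(d) \otimes E(W) \overset{\sigma^d_g \otimes \sigma^W_g}{\longrightarrow} \sigma^d_g(R(d)) \otimes E(gW) \longrightarrow E(gU),
\]
and it remains only to show that $\sigma^d_g$ sends $R(d)$ into $R(gd)$.

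This last point is immediate from the equivariance of $\ev$ established in Proposition~\ref{proposition:EvaluationIsEquivariant}: if $x \in R(d) = \ker \ev_d$, then
\[
  \ev_{gd}(\sigma^d_g(x)) = g.\ev_d(x) = 0,
\]
so $\sigma^d_g(x) \in R(gd)$. Combining these observations, $\sigma^U_g$ restricts to a map $R^{disc}(U) \to R^{disc}(gU)$, and by boundedness to a map $\overline{R^{disc}(U)} \to \overline{R^{disc}(gU)}$. It therefore induces a bounded linear map $\sigma^U_g : \mathbf{F}\V(U) \to \mathbf{F}\V(gU)$.

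The axioms for $\mathbf{F}\V$ to be affine-linearly invariant (identity, composition, compatibility with the unit, compatibility with multiplication) follow formally: each identity holds after composition with the quotient $\mathbf{E}\V(-) \to \mathbf{F}\V(-)$ because it holds in $\mathbf{E}\V$, and the quotient map is an epimorphism. The only genuinely non-formal ingredients are the equivariance of $\ev$, which was already proved, and the observation that affine-linear maps preserve the class of round discs; I expect no further obstacle beyond assembling these pieces.
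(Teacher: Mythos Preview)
Your proof is correct and follows essentially the same approach as the paper: both argue that affine-linear maps send discs to discs, use the equivariance of the evaluation map (Proposition~\ref{proposition:EvaluationIsEquivariant}) to conclude $\sigma^d_g(R(d)) \subseteq R(gd)$, deduce that $R^{disc}$ and hence its closure are preserved, and note that the invariance axioms descend formally to the quotient. Your write-up is simply more explicit about the intermediate steps.
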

\begin{proof} 
  We have to show that the maps~$\sigma^U_g$ for~$E$ pass to the quotient, their properties then follow.
  The image of a disc under an affine-linear map is again a disc. 
  The induced map on~$E$ preserves membership in~$R$ and hence~$\overline{R^{disc}}$ because~$R(U)$ is the kernel of~$\ev_U$ and the evaluation maps are compatible with the actions of the affine-linear group by Proposition~\ref{proposition:EvaluationIsEquivariant}.
\end{proof}
The rest of this subsection consists of a sequence of propositions proving that~$F$ is holomorphic, i.\,e.,~$F$ is a holomorphic affine-linearly invariant precosheaf on~$\C$.  We first prove this for~$\mathcal{O}'$ using the Cauchy integral formula and the first of these propositions, then proceed to~$\V$, which is a~$G=\C^\times$-representation, then~$E$, and finally~$F$.
\begin{proposition}\label{proposition:OneOverMapIsAnalytic}
  Let~$D\subseteq \C$ be a closed disc.
  The map
  \MapMapsto{D^\circ}{C^0(\partial D)}{w}{\left[z \mapsto \frac{1}{z-w}\right]}
  is holomorphic.
\end{proposition}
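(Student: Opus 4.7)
The target $C^0(\partial D)$ is a Banach space (with the sup norm), hence a completely normable complete bornological vector space on its own, so it suffices to show holomorphicity in the usual Banach-space-valued sense on a neighborhood of an arbitrary $w_0 \in D^\circ$. The plan is to produce a convergent power series expansion around $w_0$ directly from the geometric series for $1/(z-w)$.

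Fix $w_0 \in D^\circ$ and let $\delta = \mathrm{dist}(w_0, \partial D) > 0$. For $z \in \partial D$ and $w$ with $|w - w_0| < \delta$, write
\[
  \frac{1}{z-w} = \frac{1}{(z-w_0) - (w-w_0)} = \frac{1}{z-w_0}\cdot\frac{1}{1 - \frac{w-w_0}{z-w_0}} = \sum_{n \geq 0} \frac{(w-w_0)^n}{(z-w_0)^{n+1}}\displayperiod
\]
For each $n \geq 0$ define $f_n \in C^0(\partial D)$ by $f_n(z) = (z-w_0)^{-(n+1)}$; since $|z - w_0| \geq \delta$ on $\partial D$, we have $\|f_n\|_\infty \leq \delta^{-(n+1)}$.

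Then in the Banach space $C^0(\partial D)$, the series $\sum_{n \geq 0} f_n (w-w_0)^n$ satisfies
\[
  \sum_{n \geq 0} \|f_n\|_\infty\,|w - w_0|^n \leq \frac{1}{\delta}\sum_{n \geq 0} \left(\frac{|w-w_0|}{\delta}\right)^n < \infty
\]
whenever $|w-w_0| < \delta$, so the series converges absolutely in $C^0(\partial D)$ on the disc $B_\delta(w_0)$. Its pointwise value at $z \in \partial D$ coincides with $1/(z-w)$ by the geometric series identity above, so the sum equals the map in question on that neighborhood. Hence the map is represented by a norm-convergent power series on a neighborhood of every $w_0 \in D^\circ$, i.e., it is holomorphic as a $C^0(\partial D)$-valued map, which is the required notion of holomorphicity. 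No step is really an obstacle here; the only thing to check is the uniform-in-$z$ estimate that comes out for free from $|z - w_0| \geq \delta$.
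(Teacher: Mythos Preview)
Your proof is correct and follows essentially the same approach as the paper: expand $\frac{1}{z-w}$ around $w_0$ via the geometric series and use $|z-w_0|\geq \mathrm{dist}(w_0,\partial D)$ to obtain uniform-in-$z$ absolute convergence of the resulting power series in $C^0(\partial D)$. The paper phrases the convergence via an explicit remainder estimate (and takes the smaller radius $\delta=\varepsilon/2$), but the underlying argument is the same.
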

\begin{proof}
Let~$w_0 \in D^\circ$ so~$\varepsilon := dist(\partial D, w) > 0$.
Let~$\delta = \varepsilon/2$. 
For~$w \in B_\delta(w_0) \subseteq D^\circ$ and~$z\in\partial D$,
\[
q(z) := \left|\frac{w-w_0}{z-w_0}\right| < 1
\]
and thus
\begin{align*}
\frac{1}{z-w} &= \frac{1}{(z-w_0) - (w - w_0)} = \frac{1}{z-w_0} \frac{1}{1 - \frac{w-w_0}{z-w_0}} \\
              &= \frac{1}{z-w_0} \sum^\infty_{k = 0} \left( \frac{w-w_0}{z-w_0} \right)^k\\
              &=  \sum^\infty_{k=0} \frac{1}{(z-w_0)^{k+1} } (w-w_0)^k\displayperiod
\end{align*}
We now prove uniform convergence of functions of~$z\in \partial D$, that is,
\[
  \left[z \mapsto \frac{1}{z-w}\right] = \sum^\infty_{k=0}  \left[ z\mapsto \frac{1}{(z-w_0)^{k+1}}\right](w-w_0)^k
  \]
in~$C^0(\partial D)$. 
The remainder term
\begin{align*}
\left|\frac{1}{z-w} - \sum^N_{k=0} \frac{1}{(z-w_0)^{k+1}} (w-w_0)^k \right| &= \left| \sum^\infty_{k=N+1}  \frac{1}{(z-w_0)^{k+1}} (w-w_0)^k\right| \\
&\leq \frac{1}{|z-w_0|} \sum^\infty_{k = N + 1} \left| \frac{w-w_0}{z-w_0} \right|^k \\
&\leq \frac{1}{|z-w_0|} \frac{q(z)^{N+1}}{1-q(z)} \\
\end{align*}
converges to zero uniformly in~$z\in \partial D$ because
\[
q(z) = \left|\frac{w-w_0}{z-w_0}\right| \leq \frac{dist(w,w_0)}{dist(\partial D,w_0)} < 1/2 \displayperiod
\]
\end{proof}
\begin{proposition}\label{proposition:DeltaDistributionMapIsAnalytic}
  If~$X\subseteq\C^n$ is open, then
  \labelledMapMapsto{\delta}{X}{\mathcal{O}'(X)}{x}{\delta_x}
  is holomorphic.
\end{proposition}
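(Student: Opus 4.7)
The plan is to exhibit $\delta$ locally as a composition of a holomorphic map into a Banach space and a bounded linear map, using the Cauchy integral formula to reduce the problem to Proposition~\ref{proposition:OneOverMapIsAnalytic}.

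Fix $p \in X$ and choose a closed polydisc $K = \overline{B_{r_1}(p_1)} \times \cdots \times \overline{B_{r_n}(p_n)} \subseteq X$. First I would check that $\delta$ maps the interior $K^\circ$ into the Banach subobject $\mathcal{O}'_K(X)$: for $w \in K^\circ$ and $f \in \mathcal{O}(X)$, $|\delta_w(f)| = |f(w)| \leq \|f\|_{\infty,K}$, so $\delta_w \in \mathcal{O}'_K(X)$ with operator norm bounded by $1$ uniformly in $w \in K^\circ$. It then suffices, by the definition of holomorphicity for maps into complete bornological spaces, to show that $\delta|_{K^\circ} : K^\circ \to \mathcal{O}'_K(X)$ is holomorphic in the usual Banach-space sense.

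For this I would use the Cauchy integral formula on the polydisc: for $w \in K^\circ$,
\[
f(w) = \frac{1}{(2\pi i)^n} \oint_{\partial B_{r_1}(p_1)} \cdots \oint_{\partial B_{r_n}(p_n)} \frac{f(z_1,\ldots,z_n)}{(z_1 - w_1)\cdots (z_n - w_n)}\, dz_1 \cdots dz_n \displayperiod
\]
Letting $T = \partial B_{r_1}(p_1) \times \cdots \times \partial B_{r_n}(p_n)$, the map
\[
I : C^0(T) \longrightarrow \mathcal{O}'_K(X), \quad g \longmapsto \left[\, f \mapsto \frac{1}{(2\pi i)^n} \oint_T g(z) f(z)\, dz \,\right]
\]
is bounded and linear. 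The desired map factors as $\delta|_{K^\circ} = I \after \kappa$, where $\kappa : K^\circ \to C^0(T)$ sends $w$ to the Cauchy kernel $z \mapsto \prod_{i=1}^n (z_i - w_i)^{-1}$.

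The main step, and the only remaining one, is to check that $\kappa$ is holomorphic. For each $i$, Proposition~\ref{proposition:OneOverMapIsAnalytic} says that $w_i \mapsto [z_i \mapsto (z_i - w_i)^{-1}]$ is holomorphic from $B_{r_i}(p_i)$ into $C^0(\partial B_{r_i}(p_i))$. The external product of holomorphic functions is holomorphic, so
\[
(w_1,\ldots,w_n) \longmapsto \left[\, z \mapsto \prod_{i=1}^n \frac{1}{z_i - w_i} \,\right]
\]
is holomorphic into the completed tensor product $C^0(\partial B_{r_1}(p_1)) \barotimes \cdots \barotimes C^0(\partial B_{r_n}(p_n))$. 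Composing with the bounded pointwise-product map to $C^0(T)$ yields the holomorphicity of $\kappa$, and hence of $\delta|_{K^\circ}$ as required. The only subtlety is the passage from the one-variable statement to the polydisc, which is handled cleanly by the external product of holomorphic functions discussed in Subsection~\ref{subsection:FunctionalAnalyticPrelim}.
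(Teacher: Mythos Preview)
Your proof is correct and follows essentially the same approach as the paper: factor $\delta$ locally as the Cauchy integral map $C^0(T) \to \mathcal{O}'_K(X)$ composed with the Cauchy-kernel map $w \mapsto \prod_i (z_i-w_i)^{-1}$, and deduce holomorphicity of the latter from Proposition~\ref{proposition:OneOverMapIsAnalytic} via the external product. The only cosmetic differences are that the paper passes through a slightly smaller concentric open polydisc $d \subset K^\circ$ (which is not actually needed) and lands the kernel map directly in $C^0(T)$ rather than routing through the completed tensor product of the $C^0(\partial B_{r_i}(p_i))$.
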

\begin{proof}
  Let~$x \in X$.
  There is a closed polydisc~$D=D_1 \times \ldots\times D_n \subseteq X$ with center~$x$. 
  Let~$d=d_1 \times \ldots \times d_n$ be an open polydisc concentric with~$D$ s.\,t. the radius of~$d_i$ is strictly than the radius of~$D_i$ for~$i=1,\ldots,n$. 
  We claim that~$\delta(d) \subseteq \mathcal{O}'_D(X)$ and that~$\delta|_d$ is holomorphic as a map to the Banach space~$\mathcal{O}'_D(X)$. 
  Let~$z \in d$ and $f\in \mathcal{O}(X)$.
  We have~$\delta_z \in \mathcal{O}'_D(X)$ since
  \[
    |\delta_z(f)| = |f(z)| \leq ||f||_D \displayperiod
  \]
  The map
  \labelledMapMapsto{F}{d}{\C^0(\partial D_1 \times \ldots \times \ldots \partial D_n)}{w}{\left[z \mapsto \prod^n_{i=1}\frac{1}{z_i-w_i}\right]}
  is holomorphic since the maps
  \begin{align*} 
    d &\overset{\pr_i}{\longrightarrow} d_i \longrightarrow \C^0(\partial D_i)\\
    w &\longmapsto \left[z_i\mapsto \frac{1}{z_i - w_i}\right]
  \end{align*}
  are holomorphic for~$i=1,\ldots,n$ by Proposition~\ref{proposition:OneOverMapIsAnalytic} and the pointwise external product of holomorphic maps is again holomorphic.
  The map
  \labelledMapMapsto{G}{C^0(\partial D_1 \times \ldots \times \partial D_2)}{\mathcal{O}'_D(X)}{f}{\left[g \mapsto \frac{1}{(2\pi i)^n} \int_{\partial D_1 \times \ldots \times \ldots \partial D_n} f(z)g(z) dz\right]}  
  is bounded because
  \[
  \left|\frac{1}{(2\pi i)^n} \int_{\partial D_1 \times \ldots \times \ldots \partial D_n} f(z)g(z) dz\right| \leq ||f|| ||g||_D \displayperiod
  \]
  The map~$\delta|_d$ is analytic because it is equal to~$G\after F$ by the Cauchy integral formula.
\end{proof} 
\begin{proposition}\label{proposition:AnalyticFunctionalsHolomorphicInvariantPrecosheaf}
  Let~$G$ be a complex-analytic group manifold acting on a complex-analytic manifold~$X$ via an analytic map
  \(
    G \times X \rightarrow X.
  \)
  Then the precosheaf~$\mathcal{O}'$ of analytic functionals on~$X$ is a holomorphic~$G$-invariant cosheaf. 
\end{proposition}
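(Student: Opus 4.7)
The plan is to define the invariance maps $\sigma^U_g\colon \mathcal{O}'(U)\to \mathcal{O}'(gU)$ as the pushforwards along the biholomorphism $x\mapsto gx$, so $\sigma^U_g(\alpha)=[f\mapsto \alpha(x\mapsto f(gx))]$. The $G$-invariance axioms and compatibility with the precosheaf extension maps (themselves pushforwards along inclusions, dual to restriction) all follow immediately from functoriality of pushforward along the relevant commuting diagrams of analytic maps $U\to gU\to ghU$ and $U\subseteq V$ versus $gU\subseteq gV$.

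For the holomorphicity of $\rho_{U,V}$, I would fix $g_0$ in the interior of $\mathcal{D}_{U,V}$ and pick a relatively compact open neighborhood $W\ni g_0$ with $\overline{W}\subseteq \mathcal{D}_{U,V}$. For any compact $K\subseteq U$, the set $L_K:=\overline{W}\cdot K$ is compact in $V$ by continuity of the action, and the bound
\[
|\rho_{U,V}(g)(\alpha)(f)|=|\alpha(x\mapsto f(gx))|\leq \|\alpha\|_{\mathcal{O}'_K(U)}\,\|f\|_{L_K}
\]
for $g\in \overline{W}$ and $\alpha\in \mathcal{O}'_K(U)$ shows that $\rho_{U,V}(\overline{W})$ is a bounded subset of $\BVS(\mathcal{O}'(U),\mathcal{O}'(V))$. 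Since this hom space is complete (as $\mathcal{O}'(V)$ is), the bounded set $\rho_{U,V}(\overline{W})$ sits inside some completely normable subobject $Y\subseteq \BVS(\mathcal{O}'(U),\mathcal{O}'(V))$, and it suffices to verify that $\rho_{U,V}|_W\colon W\to Y$ is holomorphic in the Banach sense.

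This last step I would carry out by mimicking Proposition~\ref{proposition:DeltaDistributionMapIsAnalytic}: pick a chart of $G$ around $g_0$ and a closed polydisc $\overline{P}\subseteq W$ in this chart, and pair with arbitrary $\alpha\in \mathcal{O}'(U)$, $f\in \mathcal{O}(V)$. The scalar function $\zeta\mapsto \rho_{U,V}(\zeta)(\alpha)(f)$ is holomorphic by analyticity of the action, so Cauchy's integral formula over the distinguished boundary $\partial_0\overline{P}$ expresses $\rho_{U,V}(g)$ on a slightly smaller polydisc $P'$ as the composition of the holomorphic map $g\mapsto [\zeta\mapsto \prod_i(\zeta_i-g_i)^{-1}]$ from $P'$ to $C^0(\partial_0\overline{P})$ (iterating Proposition~\ref{proposition:OneOverMapIsAnalytic}) with the pairing $h\mapsto \tfrac{1}{(2\pi i)^d}\int_{\partial_0\overline{P}} h(\zeta)\,\rho_{U,V}(\zeta)\,d\zeta$. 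The hard part is verifying that this pairing is a well-defined bounded linear map $C^0(\partial_0\overline{P})\to Y$, which amounts to a Bochner-type integration in a Banach space: this rests on the uniform scalar bound from the previous paragraph combined with pointwise scalar analyticity, closely paralleling the concluding step of Proposition~\ref{proposition:DeltaDistributionMapIsAnalytic}.
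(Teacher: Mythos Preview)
Your approach is workable but differs from the paper's and leaves the hardest step underjustified. The paper does not attempt an operator-valued Cauchy formula at all. Instead it observes that $g_*$ factors as $(u\mapsto(g,u))_*\colon\mathcal{O}'(U)\to\mathcal{O}'(\mathcal{D}^\circ_{U,V}\times U)$ followed by the pushforward along the action map $\mathcal{D}^\circ_{U,V}\times U\to V$, the latter being a fixed bounded linear map independent of $g$. It then identifies $(u\mapsto(g,u))_*\alpha$ with the external product $\delta_g\times\alpha$ (checked on product functions), so that holomorphicity of $g\mapsto g_*$ reduces to the holomorphicity of $g\mapsto\delta_g$ already established in Proposition~\ref{proposition:DeltaDistributionMapIsAnalytic}, together with boundedness of $\alpha\mapsto\delta_g\otimes\alpha$ and of the external product map. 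No further estimates are needed.

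Your direct route runs into a subtlety at the last step. Knowing that $\rho_{U,V}(\overline{W})$ is bounded only produces an \emph{abstract} Banachable subobject $Y$ containing it; the evaluation functionals $T\mapsto T(\alpha)(f)$ are separating on $Y$ but need not be norming for the unknown norm on $Y$, so ``uniform scalar bound plus pointwise scalar holomorphicity'' does not immediately yield Banach-space holomorphicity into $Y$, and the Bochner integral $\int h(\zeta)\,\rho_{U,V}(\zeta)\,d\zeta$ is not defined in $Y$ without first establishing continuity of $\rho_{U,V}$ as a $Y$-valued map. This is genuinely different from Proposition~\ref{proposition:DeltaDistributionMapIsAnalytic}, where the target $\mathcal{O}'_D(X)$ is explicit and the integral defining $G$ is scalar. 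A fix is to replace the abstract $Y$ by explicit Banach spaces: for each compact $K\subseteq U$ the evaluations at $f\in\mathcal{O}(V)$ with $\|f\|_{L_K}\le 1$ \emph{are} norming on $\mathcal{O}'_{L_K}(V)$, so your bound plus scalar holomorphicity give strong holomorphicity of $g\mapsto\rho_{U,V}(g)|_{\mathcal{O}'_K(U)}$ into the Banach space of bounded operators $\mathcal{O}'_K(U)\to\mathcal{O}'_{L_K}(V)$ via the norming-subset form of Dunford's theorem, and one then assembles these over $K$. This can be made to work but is more involved than the paper's one-line factorization.
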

\begin{proof}
  Let~$U,V\subseteq X$ be open.
  We wish to show that the action map
  \MapMapsto{\mathcal{D}_{U,V}}{\BVS(\mathcal{O}'(U), \mathcal{O}'(V))}{g}{g_*}
  is holomorphic on the interior~$\mathcal{D}^\circ_{U,V}$ of the domain.
  For each~$g \in \mathcal{D}^\circ_{U,V}$, the map~$g_*$ is the composite of
  \begin{align}
    (u\mapsto (g,u))_*: \mathcal{O}'(U) \rightarrow \mathcal{O}'(\mathcal{D}^\circ_{U,V} \times U) \label{equation:AFunctionInProofOfHolomorphicity}
  \end{align}
  and the bounded, linear pushforward map  
  \[
  \mathcal{O}'(\mathcal{D}^\circ_{U,V} \times U) \longrightarrow \mathcal{O}'(V)
  \]
  along the action map 
  \[
    \mathcal{D}^\circ_{U,V} \times U \longrightarrow V \displayperiod
  \]
  The latter of these is independent of~$g$ and pointwise composition with a fixed bounded linear map preserves holomorphicity, so it suffices to show that the map
 \labelledMapMapsto{I}{\mathcal{D}^\circ_{U,V}}{\BVS(\mathcal{O}'(U), \mathcal{O}'(\mathcal{D}^\circ_{U,V} \times U))}{g}{(u\mapsto (g,u))_*} 
is holomorphic.
Proposition~\ref{proposition:DeltaDistributionMapIsAnalytic} for~$X=\mathcal{D}^\circ_{U,V}$ implies that
\MapMapsto{\mathcal{D}^\circ_{U,V}}{\mathcal{O}'(\mathcal{D}^\circ_{U,V})}{g}{\delta_g}
is holomorphic.
It follows that 
\MapMapsto{\mathcal{D}^\circ_{U,V}}{\BVS(\mathcal{O}'(U),\mathcal{O}'(\mathcal{D}^\circ_{U,V}))\barotimes\mathcal{O}'(U))}{g}{[\alpha \mapsto \delta_g \barotimes \alpha]}
is holomorphic because the map
\MapMapsto{\mathcal{O}'(U)}{\mathcal{O}'(\mathcal{D}^\circ_{U,V})\barotimes\mathcal{O}'(U))}{\alpha}{\delta_g \barotimes \alpha}
is bounded and linear. 
Postcomposing with the external product of functionals
\[
\times : \mathcal{O}'(\mathcal{D}^\circ_{U,V})\barotimes \mathcal{O}'(U) \longrightarrow  \mathcal{O}'(\mathcal{D}^\circ_{U,V} \times U)
\]
preserves holomorphicity.
Overall, the map
\labelledMapMapsto{J}{\mathcal{D}^\circ_{U,V}}{\mathcal{O}'(\mathcal{D}^\circ_{U,V} \times U)}{g}{\delta_g \times \alpha}
is holomorphic. 
It remains to identify~$I$ with~$J$.
Let~$g \in \mathcal{D}^\circ_{U,V}$.
It suffices to show that~$I(g)$ and~$J(g)$ agree on the dense subset of product functions~$p(z,w) = f(z)h(w), (z,w)\in \mathcal{D}^\circ_{U,V}$ for~$f \in \mathcal{O}(\mathcal{D}^\circ_{U,V})$ and~$h \in \mathcal{O}(U)$:
\[
J(g)(p) = \delta_g( z\mapsto \alpha(w\mapsto f(z)h(w))) = \delta_g(f) \alpha(h) = f(g) \alpha(h)\displaycomma
\]
which coincides with
\begin{align*}
I(g)(p) &= \left((u \mapsto (g,u))_*(\alpha)\right)(p) =  \alpha( (u \mapsto (g,u))^*p) =\alpha( u \mapsto p(g,u)) \\
&= \alpha( u \mapsto f(g)h(u)) = f(g) \alpha(h)\displayperiod
\end{align*}
\end{proof}
\begin{proposition}\label{proposition:TensorProductsOfHolomorphicalInvariantPrecosheaves}
  If~$A$ and~$B$ are holomorphic invariant precosheaves, then their pointwise completed tensor product~$A \barotimes B$ is a holomorphic invariant precosheaf in a natural way.
\end{proposition}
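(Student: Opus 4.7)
The plan is to define~$A \barotimes B$ pointwise as the statement suggests: set~$(A \barotimes B)(U) := A(U) \barotimes B(U)$, and take its extension maps and invariance isomorphisms to be the completed tensor products of those of~$A$ and~$B$. That this yields a~$G$-invariant precosheaf is a formal consequence of the bifunctoriality of~$\barotimes$ on~$\CBVS$ together with the corresponding axioms for~$A$ and~$B$; I would state the definitions and note the verification of the precosheaf, unitality, braiding, and invariance axioms but not belabor them.

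The substantive step is holomorphicity. Writing~$\rho^A$ and~$\rho^B$ for the action maps of~$A$ and~$B$, the action map of~$A \barotimes B$ is the pointwise tensor product
\[
  \rho^{A \barotimes B}_{U,V}(g) \;=\; \rho^A_{U,V}(g) \barotimes \rho^B_{U,V}(g) \displayperiod
\]
I would factor this into three steps. First, by Proposition~\ref{proposition:HolomorphicIfAndOnlyIfComponentsAre}, the map~$g \mapsto (\rho^A_{U,V}(g), \rho^B_{U,V}(g))$ into~$\BVS(A(U),A(V)) \times \BVS(B(U),B(V))$ is holomorphic on~$\mathcal{D}^\circ_{U,V}$, since each component is holomorphic by hypothesis. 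Second, the pointwise completed tensor product of holomorphic functions is holomorphic (the bounded linear map recalled after Definition~\ref{definition:BornologyOnHolomorphicFunctions}), so~$g \mapsto \rho^A_{U,V}(g) \barotimes \rho^B_{U,V}(g)$ is holomorphic into~$\BVS(A(U),A(V)) \barotimes \BVS(B(U),B(V))$. Third, postcomposing with the bounded linear map
\[
  \BVS(A(U),A(V)) \barotimes \BVS(B(U),B(V)) \longrightarrow \BVS\bigl(A(U) \barotimes B(U),\, A(V) \barotimes B(V)\bigr)
\]
given by~$f \otimes h \mapsto f \barotimes h$ preserves holomorphicity.

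The one non-formal point is the existence and boundedness of this last map, which packages the bifunctoriality of~$\barotimes$ as an honest morphism in~$\CBVS$. By the tensor-hom adjunction in the closed symmetric monoidal category~$\CBVS$, it reduces to the boundedness of the iterated evaluation
\[
  \BVS(A(U),A(V)) \barotimes \BVS(B(U),B(V)) \barotimes A(U) \barotimes B(U) \longrightarrow A(V) \barotimes B(V) \displaycomma
\]
which follows from the boundedness of the evaluation pairings for~$A$ and for~$B$ separately (Definition~\ref{definition:BornologyOnSetsOfBoundedMaps}) combined with the bifunctoriality of~$\barotimes$. Assembling the three pieces yields holomorphicity of~$\rho^{A \barotimes B}_{U,V}$, so~$A \barotimes B$ is a holomorphic~$G$-invariant precosheaf.
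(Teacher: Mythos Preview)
Your proof is correct and follows the same approach as the paper. The paper's proof is a one-sentence version of your argument: it invokes the bounded linear map~$\barotimes : \BVS(A(U),A(V)) \barotimes \BVS(B(U),B(V)) \to \BVS(A(U)\barotimes B(U), A(V)\barotimes B(V))$ and observes that postcomposing with it preserves holomorphicity, leaving the pointwise tensor product of~$\rho^A$ and~$\rho^B$ and the boundedness of~$\barotimes$ implicit; you have simply made those steps explicit.
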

\begin{proof}
  The function~$\rho^{A\barotimes B}_{U,V}$ is holomorphic because
  \[
    \barotimes : \BVS(A(U), A(V)) \barotimes \BVS(B(U), B(V)) \\
    \rightarrow \BVS(A(U) \barotimes B(U), A(V) \barotimes B(V))
  \]
  is a bounded linear map so postcomposing with it preserves holomorphicity.
\end{proof} 
\begin{proposition}\label{proposition:QuotientOfHolomorphicPrecosheafIsHolomorphic}
  Let~$F_1,F_2$ be affine-linearly invariant precosheaves on~$\C$ of complete bornological vector spaces. 
  If~$F_2$ is a quotient of~$F_1$, the invariance isomorphisms of~$F_1$ are induced from those of~$F_1$, and~$F_1$ is holomorphic, then~$F_1$ is holomorphic. 
\end{proposition}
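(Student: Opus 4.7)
The plan is to express the holomorphicity map $\rho^{F_2}_{U,V}$ in terms of the corresponding map for $F_1$ via the quotient, and then deduce holomorphicity by a bornological embedding argument. Let $q_U : F_1(U) \to F_2(U)$ be the quotient maps. Since the invariance isomorphisms and the extension maps of $F_2$ are induced from those of $F_1$, the identity
\[
  \rho^{F_2}_{U,V}(g) \circ q_U = q_V \circ \rho^{F_1}_{U,V}(g)
\]
holds for every $g \in \mathcal{D}^\circ_{U,V}$, as I would verify directly from the definitions of $\rho$, $\sigma$, and the extension maps.

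Writing $\iota : \BVS(F_2(U),F_2(V)) \to \BVS(F_1(U),F_2(V))$ for precomposition with $q_U$, the identity above reads $\iota \circ \rho^{F_2}_{U,V} = (q_V)_\ast \circ \rho^{F_1}_{U,V}$, where $(q_V)_\ast$ denotes post-composition with $q_V$. The right-hand side is holomorphic in $g$, since $\rho^{F_1}_{U,V}$ is holomorphic by assumption and post-composition with the bounded linear map $q_V$ induces a bounded linear map $\BVS(F_1(U),F_1(V)) \to \BVS(F_1(U),F_2(V))$, which preserves holomorphicity. I would next check that $\iota$ is a bornological embedding: it is injective since $q_U$ is surjective, and a subset $B$ of its source is bounded if and only if $\iota(B)$ is bounded in its target. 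The latter equivalence follows from the description of the bornology of the cokernel $F_2(U)$, whose bounded subsets are exactly the images under $q_U$ of bounded subsets of $F_1(U)$.

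The main obstacle is to deduce holomorphicity of $\rho^{F_2}_{U,V}$ itself from that of $\iota \circ \rho^{F_2}_{U,V}$. Given $g_0 \in \mathcal{D}^\circ_{U,V}$, holomorphicity of the composition yields a neighborhood $W$ of $g_0$ and a completely normable subobject $N \subseteq \BVS(F_1(U),F_2(V))$ into which $\iota \circ \rho^{F_2}_{U,V}|_W$ maps holomorphically. The image lies in $\iota(\BVS(F_2(U),F_2(V)))$, which I claim is $b$-closed in $\BVS(F_1(U),F_2(V))$: since $\ker q_U$ is $b$-closed in $F_1(U)$ (as the kernel of a bounded map to a complete space) and for each $x \in \ker q_U$ the evaluation-at-$x$ map $\BVS(F_1(U),F_2(V)) \to F_2(V)$ is bounded linear with $b$-closed kernel, the intersection of these kernels equals $\iota(\BVS(F_2(U),F_2(V)))$ and is therefore $b$-closed. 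Hence $N'' := N \cap \iota(\BVS(F_2(U),F_2(V)))$ is $b$-closed in the Banach space $N$, so norm-closed, so itself a Banach space. Pulling back along the bornological embedding $\iota$ identifies $N''$ with a completely normable subobject $M \subseteq \BVS(F_2(U),F_2(V))$ such that $\rho^{F_2}_{U,V}|_W : W \to M$ is holomorphic, which finishes the proof that $\rho^{F_2}_{U,V}$ is holomorphic.
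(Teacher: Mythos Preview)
Your proof is correct and follows the same underlying idea as the paper's proof: both factor $\rho^{F_2}_{U,V}$ through the holomorphic map $(q_V)_\ast \circ \rho^{F_1}_{U,V}$ into $\BVS(F_1(U),F_2(V))$ and then pass to $\BVS(F_2(U),F_2(V))$ via the identification of the latter with the subspace $\{f : f|_{\ker q_U}=0\}$. The paper simply asserts that the map $\{f\in\BVS(X,Y)\mid f|_A=0\}\to\BVS(X/\overline{A},Y)$ is bounded linear and composes, whereas you make explicit the point the paper leaves implicit: that $(q_V)_\ast\circ\rho^{F_1}_{U,V}$, although a priori holomorphic only into the ambient space $\BVS(F_1(U),F_2(V))$, is in fact holomorphic into the subspace because that subspace is $b$-closed (so its intersection with a completely normable subobject is a closed, hence Banach, subspace). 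Your bornological-embedding formulation of $\iota$ and the $b$-closedness argument via the evaluation maps $\ev_x$ are exactly what is needed to justify this step; the paper's one-line proof is correct but takes this for granted.
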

Here, quotient means cokernel by some map of precosheaves of complete bornological spaces.  
\begin{proof}
  Let~$X,Y$ be complete bornological spaces and~$A \subseteq X$ a sub vector space. The map
  \begin{align}
    \{ f \in \BVS(X,Y) \mid f|_A = 0 \} \longrightarrow \BVS(X/\overline{A}, Y)\label{display:TakingInducedMapsOnQuotientIsBounded}
  \end{align}
  sending~$f$ to the map induced by~$f$ is a bounded linear map.
  Let~$i: G \hookrightarrow F_1$ be a pointwise inclusion of precosheaves of which~$q: F_1 \rightarrow F_2$ is a cokernel.
  For~$U,V \subseteq \C$ open, we apply this to~$X=F_1(U)$,~$A = R(U)$, and~$Y = F_2(V)$ to conclude that the action map~$\rho^{F_2}_{U,V}$ for~$F_2$ is holomorphic because it is the composite of~$q_* \after \rho^{F_1}_{U,V}$ with the bounded linear map from~(\ref{display:TakingInducedMapsOnQuotientIsBounded}).  
\end{proof}
\begin{proposition}\label{proposition:DirectSumsOfHolomorphicPrecosheavesAreHolomorphic}
  Direct sums of holomorphic precosheaves are again holomorphic.
\end{proposition}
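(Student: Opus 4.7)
Let $(F_i)_{i \in I}$ be a family of holomorphic $G$-invariant precosheaves on $\C$ and let $F := \bigoplus_i F_i$ be the precosheaf whose value on an open $U$ is the direct sum in $\CBVS$, with extension maps and invariance isomorphisms defined componentwise. The plan is to exhibit the action map $\rho^F_{U,V}$ as a composite of a holomorphic map with a bounded linear map, hence holomorphic.

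First, because the invariance isomorphisms and extension maps of $F$ are defined componentwise, the action map $\rho^F_{U,V}(g)$ equals $\bigoplus_i \rho^{F_i}_{U,V}(g)$ on each $g \in \mathcal{D}_{U,V}$. By Proposition~\ref{proposition:HolomorphicIfAndOnlyIfComponentsAre} applied to the product $\prod_i \BVS(F_i(U), F_i(V))$, the assignment
\labelledMapMapsto{\widetilde{\rho}}{\mathcal{D}_{U,V}}{\prod_{i \in I} \BVS(F_i(U), F_i(V))}{g}{\bigl(\rho^{F_i}_{U,V}(g)\bigr)_{i \in I}}
is holomorphic on the interior, since each component $\rho^{F_i}_{U,V}$ is holomorphic by hypothesis.

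Next, I would construct the ``direct sum'' map
\labelledMapMapsto{\Sigma}{\prod_{i \in I} \BVS(F_i(U), F_i(V))}{\BVS\bigl(\textstyle\bigoplus_i F_i(U),\, \bigoplus_i F_i(V)\bigr)}{(f_i)_{i \in I}}{\bigoplus_{i} f_i}
and verify that it is bounded and linear. Linearity is immediate. For boundedness, let $(B_i)_i$ be bounded in $\prod_i \BVS(F_i(U), F_i(V))$, so each $B_i \subseteq \BVS(F_i(U), F_i(V))$ is bounded. Given a bounded subset $C \subseteq \bigoplus_i F_i(U)$, by the definition of the direct-sum bornology $C \subseteq \bigoplus_i C_i$ for bounded $C_i \subseteq F_i(U)$ with all but finitely many zero; pick bounded $D_i \subseteq F_i(V)$ with $f(C_i) \subseteq D_i$ for all $f \in B_i$ (and $D_i = 0$ where $C_i = 0$). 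Then $\Sigma(B)(C) \subseteq \bigoplus_i D_i$, which is bounded. So $\Sigma$ is bounded.

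Finally, $\rho^F_{U,V} = \Sigma \circ \widetilde{\rho}$, and holomorphicity is preserved under postcomposition with a bounded linear map into a complete bornological space, so $\rho^F_{U,V}$ is holomorphic on the interior of $\mathcal{D}_{U,V}$. The main (modest) obstacle is the boundedness check for $\Sigma$, which depends on unwinding the definitions of the bornologies on the product, on the direct sum, and on the internal hom; once this is in hand, the rest follows from Proposition~\ref{proposition:HolomorphicIfAndOnlyIfComponentsAre}.
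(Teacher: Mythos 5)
Your proof is correct and follows essentially the same route as the paper's: the paper's one-line argument invokes ``compatibility of direct sums with the enrichment of $\CBVS$ over itself'' (which is exactly your bounded map $\Sigma$) together with Proposition~\ref{proposition:HolomorphicIfAndOnlyIfComponentsAre}, and you have simply spelled out the boundedness check that the paper leaves implicit.
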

\begin{proof}
This follows from the compatibility of direct sums with the enrichment of~$\CBVS$ over itself and Proposition~\ref{proposition:HolomorphicIfAndOnlyIfComponentsAre} that a product-valued function is holomorphic if and only if its components are.  
\end{proof}
The same argument proves that direct sums of holomorphic representations are holomorphic, so~$\V$ is holomorphic as the direct sum of holomorphic one-dimensional representations.
Thus the constant precosheaf assigning~$\V$ to every open is holomorphic.
\begin{proposition}\label{proposition:EandFareHolomorphic}
  The precosheaves~$E$ and~$F$ on~$\C$ are holomorphic.
\end{proposition}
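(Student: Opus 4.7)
The strategy is to build $E$ (and then $F$) from simpler holomorphic affine-linearly invariant precosheaves using the closure properties already established: Proposition~\ref{proposition:AnalyticFunctionalsHolomorphicInvariantPrecosheaf} (analytic functionals are holomorphic), Proposition~\ref{proposition:TensorProductsOfHolomorphicalInvariantPrecosheaves} (tensor products preserve holomorphicity), Proposition~\ref{proposition:QuotientOfHolomorphicPrecosheafIsHolomorphic} (quotients preserve holomorphicity), and Proposition~\ref{proposition:DirectSumsOfHolomorphicPrecosheavesAreHolomorphic} (direct sums preserve holomorphicity).

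The first step is to treat, for each $m \geq 0$, the assignment $\mathcal{E}_m(U) := \mathcal{O}'(U^m \setminus \Delta)$ as a holomorphic affine-linearly invariant precosheaf on~$\C$. The group $G = \C^\times \ltimes \C$ acts diagonally on~$\C^m$, preserving~$\Delta$, and the functor $U \mapsto U^m \setminus \Delta$ from opens of~$\C$ to opens of~$\C^m \setminus \Delta$ is $G$-equivariant. Proposition~\ref{proposition:AnalyticFunctionalsHolomorphicInvariantPrecosheaf} applied to the $G$-space~$\C^m \setminus \Delta$ gives holomorphicity of the action map on the precosheaf $V \mapsto \mathcal{O}'(V)$ for opens $V \subseteq \C^m \setminus \Delta$; since $gU \subseteq V$ implies $(gU)^m \setminus \Delta \subseteq V^m \setminus \Delta$, the inclusion $\mathcal{D}^{\mathcal{E}_m}_{U,V} \subseteq \mathcal{D}^{\mathcal{O}'}_{U^m\setminus\Delta,\, V^m\setminus\Delta}$ lets us pull holomorphicity back to $\mathcal{E}_m$.

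The second step is to tensor with the constant precosheaf valued in $\V^{\otimes m}$. The space $\V = \bigoplus_k \V_k$ is a holomorphic $\C^\times$-representation as a direct sum of one-dimensional characters (by the remark following Proposition~\ref{proposition:DirectSumsOfHolomorphicPrecosheavesAreHolomorphic}), hence so is $\V^{\otimes m}$, and translations act trivially, making it a holomorphic $G$-representation. The corresponding constant precosheaf is holomorphic, so Proposition~\ref{proposition:TensorProductsOfHolomorphicalInvariantPrecosheaves} yields holomorphicity of $\mathcal{E}_m \barotimes \const(\V^{\otimes m})$. To pass to $\Sigma_m$-coinvariants, I present them as the cokernel in $\CBVS$ of a natural map $\bigoplus_{\sigma \in \Sigma_m}(\mathcal{E}_m \barotimes \const(\V^{\otimes m})) \to \mathcal{E}_m \barotimes \const(\V^{\otimes m})$ sending $(x_\sigma)_\sigma$ to $\sum_\sigma(x_\sigma - \sigma \cdot x_\sigma)$; this is a map of $G$-invariant precosheaves because the $\Sigma_m$-action commutes with the $G$-action and with restriction. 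Proposition~\ref{proposition:QuotientOfHolomorphicPrecosheafIsHolomorphic} then gives that the coinvariants form a holomorphic invariant precosheaf. Finally, Proposition~\ref{proposition:DirectSumsOfHolomorphicPrecosheavesAreHolomorphic} assembles these over $m \geq 0$ to show that $E = \mathbf{E}\V$ is holomorphic.

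For $F = \mathbf{F}\V$, by definition $F(U) = E(U)/\overline{R^{disc}(U)}$, and by Proposition~\ref{proposition:FIsInvariant} the invariance maps of $F$ are induced from those of $E$. Hence $F$ is a quotient of the holomorphic invariant precosheaf $E$ in the sense of Proposition~\ref{proposition:QuotientOfHolomorphicPrecosheafIsHolomorphic} (take the map of precosheaves whose image generates $\overline{R^{disc}}$, for instance the canonical map from $\bigoplus_{d,W} R(d) \barotimes E(W)$ assembled naturally in $U$), so $F$ is holomorphic as well. The main conceptual hurdle is the bookkeeping in the first step, matching the domain of holomorphicity $\mathcal{D}_{U,V}$ for a precosheaf on~$\C$ with that for the induced precosheaf on~$\C^m \setminus \Delta$; everything else is a formal application of the closure properties recorded in the previous propositions.
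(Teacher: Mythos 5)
Your proposal is correct and follows essentially the same route as the paper's proof: build up $\mathcal{O}'(U^m\setminus\Delta)\otimes\V^{\otimes m}$ as a holomorphic invariant precosheaf via Propositions~\ref{proposition:AnalyticFunctionalsHolomorphicInvariantPrecosheaf} and~\ref{proposition:TensorProductsOfHolomorphicalInvariantPrecosheaves}, then apply the quotient and direct-sum closure properties (Propositions~\ref{proposition:QuotientOfHolomorphicPrecosheafIsHolomorphic} and~\ref{proposition:DirectSumsOfHolomorphicPrecosheavesAreHolomorphic}) to get $E$, and one more quotient to get $F$. You merely spell out two steps the paper leaves implicit, namely the comparison of the domains $\mathcal{D}_{U,V}$ under $U\mapsto U^m\setminus\Delta$ and the presentation of $\Sigma_m$-coinvariants as a cokernel, both of which are handled correctly.
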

\begin{proof}
  The preceding propositions imply that, for~$n \geq 0$ the precosheaves given by 
  \(
  \mathcal{O}'(U^n \setminus \Delta) \otimes \V^n   
  \)
  on~$U\subseteq \C$ are holomorphic.
  Proposition~\ref{proposition:QuotientOfHolomorphicPrecosheafIsHolomorphic} and Proposition~\ref{proposition:DirectSumsOfHolomorphicPrecosheavesAreHolomorphic} about quotients resp.\ sums imply that~$E$ is holomorphic. Proposition~\ref{proposition:QuotientOfHolomorphicPrecosheafIsHolomorphic} implies that~$F$ is holomorphic.
\end{proof}

\Subsection{Back to Geometric Vertex Algebras}\label{subsection:BackToGeometricVertexAlgebras}
Having obtained a holomorphic affine-linearly invariant prefactorization algebra~$\mathbf{F}\V$ from a geometric vertex algebra~$\V$ in the previous subsection, we now show that we can recover~$\V$ using the procedure of Section~\ref{subsection:FromPrefactorizationAlgebrasToGeometricVertexAlgebras}.
First, we identify~$\mathbf{V}\mathbf{F}\V$ with~$\V$ as a~$\Z$-graded bornological vector space, proving the discreteness of~$\mathbf{V}\mathbf{F}\V$.
Second, we identity the multiplication maps of~$\mathbf{V}\mathbf{F}\V$ with those of~$\V$. 
This implies that~$\mathbf{F}\V$ has meromorphic OPE.
In summary, this means that~$\mathbf{F}\V$ is the sort of prefactorization algebra that gives rise to a (geometric) vertex algebra, and we show that this geometric vertex algebra~$\mathbf{V}\mathbf{F}\V$ is isomorphic to~$\V$. 

We will use the evaluation map~$\ev_U$ from~$E(U)$ to~$\Vbar$ for~$U$ a disc to get a map~$\mathbf{V}\mathbf{F}\V \rightarrow \Vbar$ which will turn out to have image in~$\V$.
The following lemma implies that~$\ev_U$ induces a linear map~$F(U) \rightarrow \Vbar$.
Its proof uses the associativity of~$\V$ which we have not used so far, except for the case~$m=0$ which is translation invariance.
\begin{lemma}\label{lemma:RelationsOnDisc}
If~$D \subset \C$ is an open disc or~$U= \C$, then~$R(D) = R^{disc}(D)$.
\end{lemma}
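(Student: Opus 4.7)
The plan is to prove the two inclusions $R^{disc}(D) \subseteq R(D)$ and $R(D) \subseteq R^{disc}(D)$ separately.

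For $R^{disc}(D) \subseteq R(D)$, I would apply the associativity and translation covariance axioms of the geometric vertex algebra $\V$. Given generators $\rho = [\alpha \otimes a] \in R(d)$ and $e = [\beta \otimes b] \in E(W)$ for a sub-disc $d = B_r(z_0) \subseteq D$ and an open subset $W \subseteq D$ disjoint from $d$, the $k$-th component of $\ev_D(M(\rho \otimes e))$ equals
\[
\alpha\bigl(z \mapsto \beta(w \mapsto p_k\,\mu(a, z, b, w))\bigr).
\]
Using associativity (after permutation invariance) to cluster the $a$-arguments around $z_0$, I expand $\mu(a, z, b, w)$ as an absolutely convergent sum of terms of the form $\mu(p_l\,\mu(a, z - z_0), z_0, b, w)$, and translation covariance identifies $\mu(a, z - z_0)$ with $(-z_0).\mu(a, z)$. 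Since $\alpha(\mu(a)) = 0$ componentwise by hypothesis, interchanging $\alpha$ with the absolutely convergent sum and with the $\beta$-evaluation yields zero. Note that the strict inequality needed for associativity holds because $z \in d$ gives $|z - z_0| < r$ while $w \in W$ gives $|w - z_0| \geq r$.

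For the reverse inclusion $R(D) \subseteq R^{disc}(D)$, the key observation is that every $\alpha \in \mathcal{O}'(D^m \setminus \Delta)$ is bounded on some compact $K \subseteq D^m \setminus \Delta$, and such $K$ is contained in $d^m \setminus \Delta$ for a strictly smaller sub-disc $d \subsetneq D$ (by compactness, since $D^m$ is open and $\Delta$ is closed; in the case $U = \C$, choose any disc large enough to contain $K$). I would then realize $\alpha$ as the pushforward $i_*\alpha'$ along the inclusion $i\colon d \hookrightarrow D$ of some $\alpha' \in \mathcal{O}'(d^m \setminus \Delta)$. Granted this, if $[\alpha \otimes a] \in R(D)$, naturality of $\ev$ with respect to inclusions yields $\alpha'(\mu(a)) = \alpha(\mu(a)) = 0$, so $[\alpha' \otimes a] \in R(d)$. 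Taking $W = \emptyset$ in the definition of $R^{disc}(D)$ and using unitality,
\[
[\alpha \otimes a] = M^{d, \emptyset}_D\bigl([\alpha' \otimes a] \otimes 1\bigr) \in R^{disc}(D).
\]

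The main obstacle will be the construction of $\alpha'$. I would define $\alpha'(f|_{d^m \setminus \Delta}) := \alpha(f)$ on the image of the restriction map $\mathcal{O}(D^m \setminus \Delta) \to \mathcal{O}(d^m \setminus \Delta)$, where well-definedness follows from the bound $|\alpha(f)| \leq C\,\|f\|_K$ that forces $\alpha$ to depend only on the restriction of $f$ to $K \subseteq d^m \setminus \Delta$, and then extend by continuity to all of $\mathcal{O}(d^m \setminus \Delta)$. The continuity extension relies on density of the image of the restriction map in the topology of uniform convergence on compacts, a Runge-type approximation statement for the nested pair of Stein open subsets $d^m \setminus \Delta \subseteq D^m \setminus \Delta$, where $\bar d \subseteq D$ ensures the necessary holomorphic convexity.
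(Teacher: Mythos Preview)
Your argument for the hard inclusion $R^{disc}(D)\subseteq R(D)$ is essentially the paper's: one unfolds $\ev_D$ of a product and uses associativity of $\V$ centred at the centre of $d$. One correction, though: you write a generator of $R^{disc}(D)$ as $M(\rho\otimes e)$ with $\rho=[\alpha\otimes a]\in R(d)$ a \emph{single} term. That is not legitimate. Elements of $R(d)=\ker\ev_d$ are finite sums $r=\sum_{i}[\alpha^i\otimes a^i]$ (with the $\alpha^i$ living in different arities) whose evaluations cancel; an individual summand need not lie in $R(d)$. The paper carries the sum through the whole computation and only at the very end uses $\sum_i \alpha^i(\mu(a^i)(\,\cdot\,-p))=0$. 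Your sketch should be adjusted accordingly; the fix is routine but necessary.

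For the inclusion $R(D)\subseteq R^{disc}(D)$ you work much harder than needed. When $D$ is a disc, $D$ itself is an admissible $d$ in the index set defining $R^{disc}(D)$: take $d=D$, $W=\emptyset$, and use unitality to write $x=M^{D,\emptyset}_D(x\otimes 1)$. That is the paper's one-line argument. Your shrinking-to-a-strictly-smaller-disc plus Runge/Hahn--Banach lift is only required for $D=\C$, and even there the paper phrases it more simply: any analytic functional on $\C^m\setminus\Delta$ is bounded with respect to some $\|\cdot\|_K$, hence comes from $E$ of a large disc, and one then invokes the already-proved disc case. If you do keep the lifting argument, note that the Runge step---density of $\mathcal{O}(D^m\setminus\Delta)\to\mathcal{O}(d^m\setminus\Delta)$---is not obvious for the pair $d^m\setminus\Delta\subset D^m\setminus\Delta$; it is cleaner to extend $\alpha'$ from the restriction image by Hahn--Banach using the bound $|\alpha'(\cdot)|\le C\|\cdot\|_K$, which requires no density.
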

\begin{proof}
First, let~$D \subseteq \C$ be an open disc.
The inclusion~$R(D)\subseteq R^{disc}(D)$ follows from the definition by considering~$d=D$ and~$W=\emptyset$ as in Definition~\ref{definition:F}, that is, all~$x \in R(D)$ are equal to the product of~$x$ with~$1 \in E(\emptyset)$. 
For the other inclusion, let~$d \subseteq D$ be an open disc and~$W\subseteq D$ be open and disjoint from~$d$.
Our goal is to show that~$M^{d,W}_U(R(d) \otimes E(W)) \subseteq R(D)$ because sets of this form span~$R^{disc}(D)$.
We may restrict attention to~$r \otimes e \in R(d) \otimes E(W)$ with~$r = \sum^m_{i=0} \alpha^i\otimes a^i$ with~$\alpha^i \in \mathcal{O}'(d^i \setminus \Delta)$ for some~$m\geq 0$ and~$a^i \in \V^{\otimes i}$ and~$e=\beta \otimes b$ with~$\beta \in \mathcal{O}'(W^n \setminus \Delta)$ and~$b \in \V^{\otimes n}$ for some~$n \geq 0$.
Let~$p$ be the center of~$d$. 
Since~$r$ is a relation, so is~$\sigma^d_{-p}(r)$, which means that
\begin{align}
  0 = \ev(\sigma^d_{-p}(r)) = \sum^m_{i=0} ((-p)_\ast\alpha^i)(\mu(a^i)) = \sum^m_{i=0} \alpha^i[ \mathbf{z} \mapsto \mu(a^i)(\mathbf{z} - p)]\displaycomma \label{display:ShiftedRelation}
\end{align}
and this is what we need to prove that~$M^{d,W}_D(r\otimes e)$ is a relation by using the associativity of~$\V$. 
We denote the first~$i$ coordinates of~$D^{i+n} \subseteq \C^{i+n}$ by~$\mathbf{z}$ and the last~$n$ by~$\mathbf{w}$. 
\begin{align*}
&\ev_D(M^{d,W}_{D}(r\otimes e)) \\
&= \sum^m_{i=0} \alpha^i\beta[(\mathbf{z},\mathbf{w})
\mapsto \mu(a^i \otimes b)(z,w)] \\
&= \sum^m_{i=0} \beta[\mathbf{w} \mapsto \alpha^i[ \mathbf{z} \mapsto \mu(a^i \otimes b)(z,w)]] \\
&= \sum^m_{i=0} \beta[\mathbf{w} \mapsto \alpha^i\left(\sum_{k \in \Z}[ \mathbf{z} \mapsto  \mu(p_k\left(\mu(a^i)(\mathbf{z}-p)\right)\otimes \mathbf{b})(p,\mathbf{w})]\right)] \\
\intertext{
(because of associativity, which holds for tensors like~$a^i$, not just elementary tensors)}
&= \sum^m_{i=0} \beta[\mathbf{w} \mapsto \sum_{k \in \Z}\alpha^i[ \mathbf{z} \mapsto  \mu(p_k\left(\mu(a^i)(\mathbf{z}-p)\right)\otimes \mathbf{b})(p,\mathbf{w})]] \\
\intertext{
(because of the continuity of $\alpha^i$)}
&= \sum^m_{i=0}\beta[\mathbf{w}
\mapsto \sum_{k \in \Z} \mu(p_k\left(\alpha^i [\mathbf{z} \mapsto \mu(a^i)(\mathbf{z}-p)]\right)\otimes \mathbf{b})(p,\mathbf{w}) ]\\ 
&= \sum^m_{i=0}\beta[\mathbf{w}
\mapsto \sum_{k \in \Z} \mu(\alpha^i[\mathbf{z} \mapsto p_k\left(  \mu(a^i)(\mathbf{z}-p)\right)]\otimes \mathbf{b})(p,\mathbf{w}) ]\\ 
\intertext{(because evaluating analytic functionals on vector-valued functions is compatible with linear maps)}
&= \beta[\mathbf{w}
\mapsto \sum_{k \in \Z} \mu(p_k\left(\sum^m_{i=0}\alpha^i [\mathbf{z} \mapsto \mu(a^i)(\mathbf{z}-p)]\right)\otimes \mathbf{b})(p,\mathbf{w}) ]\\ 
\intertext{(because of the componentwise definition of the evaluation of analytic functionals on functions with values in a~$\Z$-graded vector space)}
&= \beta[\mathbf{w}
\mapsto \sum_{k \in \Z} \mu(p_k\left(\sum^m_{i=0}\alpha^i [\mathbf{z} \mapsto \mu(a^i)(\mathbf{z}-p)]\right)\otimes \mathbf{b})(p,\mathbf{w}) ]\\ 
  &=\beta[ \mathbf{w}\mapsto  \sum_{k \in \Z} \mu(0 \otimes b)(p,\mathbf{w})]\text{\quad\quad\quad\quad(see Equation~(\ref{display:ShiftedRelation}))}\\ 
  &= 0
\end{align*}
The case of~$U=\C$ follows because every analytic functional on~$\C^n$ arises from some bounded subset, so~$U=\C$ reduces to the case of~$U$ some sufficiently large disc.  
\end{proof}
\begin{corollary}
For every open subset~$U\subseteq \C$, the evaluation map~$\ev_U$ factors through~$F(U)$. 
\end{corollary}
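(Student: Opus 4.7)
The plan is to observe that $F(U) = E(U)/\overline{R^{disc}(U)}$ and to show directly that $\ev_U$ vanishes on $\overline{R^{disc}(U)}$, from which the factorization through the quotient is immediate by the universal property of cokernels in $\CBVS$.

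First I would show that $\ev_U$ vanishes on $R^{disc}(U)$ itself. By the definition of $R^{disc}(U)$ (Definition~\ref{definition:F}), it is the image of the map $\bigoplus_{d,W} R(d)\otimes E(W) \rightarrow E(U)$, so it suffices to check that $\ev_U(M^{d,W}_U(r\otimes e)) = 0$ for every open disc $d\subseteq U$, every open $W\subseteq U$ disjoint from $d$, every $r\in R(d)$, and every $e\in E(W)$. But this is exactly the computation performed inside the proof of Lemma~\ref{lemma:RelationsOnDisc}: taking $p$ to be the center of $d$, the hypothesis $r\in R(d)$ gives Equation~(\ref{display:ShiftedRelation}), and the sequence of equalities there (using associativity of $\V$, continuity of analytic functionals, and the componentwise evaluation against $\Z$-graded holomorphic functions) concludes that the evaluation of $M^{d,W}_U(r\otimes e)$ is zero. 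Although the lemma is stated for $D$ a disc or $D=\C$, nothing in that computation uses that the ambient open subset is a disc; it uses only that $d$ is a disc (so that the translation $\sigma^d_{-p}$ recentres it at zero) and that $W$ is open. So the same calculation carries over verbatim with $D$ replaced by $U$.

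Second, I would upgrade this from $R^{disc}(U)$ to its b-closure. The map $\ev_U : E(U)\rightarrow \Vbarbb \subseteq \Vbar$ is a bounded linear map to a complete bornological vector space, hence continuous with respect to the b-topology by the remarks preceding Definition~\ref{definition:Continuity}. Since $\{0\}$ is b-closed in $\Vbar$ (as noted in the discussion of cokernels in $\CBVS$), the kernel $\ker \ev_U$ is b-closed in $E(U)$. Combined with $R^{disc}(U)\subseteq \ker \ev_U$ from the previous step, this gives $\overline{R^{disc}(U)}\subseteq \ker \ev_U$.

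Therefore $\ev_U$ factors uniquely through the quotient map $E(U)\rightarrow F(U)$. The only nontrivial step is the first, and even that is not really an obstacle since the required identity is contained as a sub-computation in the proof of Lemma~\ref{lemma:RelationsOnDisc}; the role of the present corollary is mainly to extract it and extend it from the setting of a disc or $\C$ to an arbitrary open $U$.
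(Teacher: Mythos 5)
Your argument is correct, but it takes a different route from the paper. The paper's proof is a two-line reduction: it invokes Lemma~\ref{lemma:RelationsOnDisc} only in the case $U=\C$ (where it gives $R^{disc}(\C)=R(\C)=\ker\ev_\C$, a b-closed subspace) and then uses the naturality $\ev_U = \ev_\C \after E(U\subseteq\C)$ together with the fact that the extension map carries $R^{disc}(U)$ into $R^{disc}(\C)$; this needs only the \emph{statement} of the lemma. You instead reprove the inclusion $R^{disc}(U)\subseteq\ker\ev_U$ directly for arbitrary open $U$, by observing that the computation inside the proof of Lemma~\ref{lemma:RelationsOnDisc} never uses that the ambient open set is a disc --- it only uses that $d$ is a disc centered at $p$ (so that associativity of $\V$ applies with insertion point $p$, the hypothesis $\max_i|z_i-p|<\min_j|w_j-p|$ being guaranteed by $d\cap W=\emptyset$) --- and then pass to the b-closure via continuity of the bounded map $\ev_U$ for the b-topology. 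Both arguments are sound; yours avoids the $U=\C$ case of the lemma (whose proof requires the separate observation that every analytic functional on $\C^n$ is supported near a bounded set), at the cost of appealing to the internals of the lemma's proof rather than its statement. Your closure step is exactly the mechanism of Remark~\ref{remark:ClosednessOfRelations}, so no gap there.
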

\begin{proof}
By Lemma~\ref{lemma:RelationsOnDisc}, this is the case for~$U=\C$ and~$\ev_U = \ev_\C \after E(U \subseteq \C)$.  
\end{proof}
We denote the induced map~$F(U) \rightarrow \Vbar$ by $\ev_U$, too.
\begin{remark}
Another consequence of the lemma is that~$F(D) = E(D)/R(D)$ for~$D$ a disc since we know~$R(D)$ to be b-closed; see Remark~\ref{remark:ClosednessOfRelations}.
By the definition of~$R(D)$, this means that we can identify~$F(D)$ with a sub vector space of~$\Vbar$.
\end{remark}
The comparison map~$I: \V\rightarrow \mathbf{V}\mathbf{F}\V$ is defined as follows.
For every~$\varepsilon>0$, evaluation at zero is an element~$\delta_0 \in \mathcal{O}'(B_\varepsilon(0))$, which is fixed under the action of~$\D^\times$. For homogeneous~$a \in \V_k$, we define~$I(a) \in \mathbf{V}\mathbf{F}\V$ to be the element corresponding to
\[
  [\delta_0 \otimes a] \in F(B_\varepsilon(0))_k \cong F(0)_k = (\mathbf{V}\mathbf{F}\V)_k \displayperiod
\]
It is clear that this is a linear map on the summands of~$\V$, and we extend~$I$ to~$\V$ by linearity. 
In other words, $I$ is determined by the formula~$P_R(I(a)) = [\delta_0 \otimes a]$ where we recall that~$P_R$ is the natural map~$\mathbf{V}\mathbf{F}\V \rightarrow \mathbf{F}\V(B_R(0))$ for~$R>0$.
\begin{proposition}\label{proposition:VFisIdAsGradedBVS}
The linear map
\labelledMapMapsto{I}{\V}{\mathbf{V}\mathbf{F}\V}{a}{[\delta_0 \otimes a]\quad\quad (\mathrm{for}\ a\ \mathrm{homogeneous})}
is an isomorphism of~$\Z$-graded complete bornological spaces.
\end{proposition}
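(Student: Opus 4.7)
The plan is to produce a bounded linear inverse to~$I$ using the evaluation map, verify it is inverse to~$I$ by exploiting the injectivity of~$\ev$ on discs, and then confirm that the resulting bijection is a bornological isomorphism in each degree.

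First I would construct a candidate inverse~$\ev_* : \mathbf{V}\mathbf{F}\V \rightarrow \V$. By the Corollary to Lemma~\ref{lemma:RelationsOnDisc}, evaluation descends to a bounded map~$\ev_{B_R(0)} : F(B_R(0)) \rightarrow \Vbar$. Composing with the costalk projection~$\pi_R : F(0) \rightarrow F(B_R(0))$ gives a map that is independent of~$R$, because both~$\ev$ and~$\pi$ commute with the inclusions~$i^r_R$. Using the~$\C^\times$-equivariance of~$\ev$ from Proposition~\ref{proposition:EvaluationIsEquivariant}, the~$k$-th weight space of~$F(0)$ is sent into~$\V_k \subseteq \Vbar$, yielding a bounded linear map~$F(0)_k \rightarrow \V_k$ for each~$k$; summing over~$k$ gives~$\ev_*$.

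The identity~$\ev_* \after I = \id_\V$ is then a direct computation: for homogeneous~$a \in \V_k$, the element~$I(a)$ is represented in~$F(B_\varepsilon(0))$ by~$[\delta_0 \otimes a]$, and~$\ev_{B_\varepsilon(0)}([\delta_0 \otimes a]) = \delta_0(\mu(a)) = \mu(a)(0) = a$ by the insertion-at-zero axiom of a geometric vertex algebra applied with~$m=1$.

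For the opposite identity~$I \after \ev_* = \id$, let~$x \in F(0)_k$ and set~$y = \pi_R(x) \in F(B_R(0))_k$ for some~$R > 0$. By Lemma~\ref{lemma:RelationsOnDisc} together with the b-closedness of~$R(B_R(0))$ noted in Remark~\ref{remark:ClosednessOfRelations}, one has~$F(B_R(0)) = E(B_R(0))/R(B_R(0))$, so the induced~$\ev_{B_R(0)} : F(B_R(0)) \rightarrow \Vbar$ is injective. Both~$y$ and the image of~$I(\ev_*(x))$ in~$F(B_R(0))_k$ (via the isomorphism of Proposition~\ref{proposition:InclusionInducesIsomorphismsOnWeightSpaces}) evaluate to the same element~$\ev_*(x) \in \V_k$, so they agree; as this holds for all~$R$ and~$F(0)_k = \lim_r F(B_r(0))_k$, we conclude~$x = I(\ev_*(x))$. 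Finally,~$I$ is automatically bounded on each discrete~$\V_k$, and~$\ev_*$ is bounded by construction, so~$I$ is a bornological isomorphism degreewise and hence globally. The main obstacle is the identification step in~$F(B_R(0))_k$: equating two elements of a quotient by comparing their images in~$\Vbar$ requires injectivity of~$\ev$ on a disc, which is secured by Lemma~\ref{lemma:RelationsOnDisc} and ultimately by the associativity of~$\V$.
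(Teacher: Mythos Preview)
Your proof is correct and essentially the same as the paper's: both construct the inverse via the evaluation map (the paper calls it~$J_r = \ev_{B_r(0)} \after P_r$, which coincides with your~$\ev_*$ on weight spaces), verify~$\ev_* \after I = \id_\V$ using the insertion-at-zero axiom, and establish the other direction via the injectivity of~$\ev$ on discs supplied by Lemma~\ref{lemma:RelationsOnDisc}. The only cosmetic difference is that the paper deduces~$I \after J_r = \id$ from the injectivity of~$J_r$ together with~$J_r \after I = \id$, whereas you verify~$I \after \ev_* = \id$ directly by comparing images under~$\ev_{B_R(0)}$; these are logically equivalent and rest on the same ingredients.
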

\begin{proof}
  It is clear that~$I$ is bounded since~$\V$ is discrete. 
  For all~$r>0$, let~$J_r$ be the composite of~$P_r : \mathbf{V}\mathbf{F}\V \rightarrow \mathbf{F}\V(B_r(0))$ from Section~\ref{subsection:FromPrefactorizationAlgebrasToGeometricVertexAlgebras} (for~$F := \mathbf{F}\V$) with the evaluation map~$\mathbf{F}\V(B_r(0)) \rightarrow \Vbar$.
  The evaluation map, which takes values in~$\Vbar$ in general, maps elements arising from~$\mathbf{V}\mathbf{F}\V$ to~$\V$ because the evaluation map is~$\C^\times$-equivariant.
  We can thus view~$J_r$ as a map to~$\V$. 
  The insertion-at-zero axiom implies that 
  \[
    J_r(I(a)) = \ev_{B_R(0)}(P_R(I(a))) = \ev_{B_R(0)}([\delta_0 \otimes a]) = \mu(a,0) = a
  \]
  for homogeneous~$a$.
  This proves that~$J_r \after I = \id_\V$ because every element of~$\V$ is a sum of homogeneous elements.   
  
  It remains to prove that~$J_r$ is injective and bounded.
  It is a bounded linear map as the composite of two bounded linear maps. 
  The inclusion~$\mathbf{V}F \hookrightarrow \overline{\mathbf{V}F}$ is the composite of~$P_R$ with the map~$L^R : F(B_R(0)) \rightarrow \overline{\mathbf{V}F}$, so~$P_R$ is injective.
  Lemma~\ref{lemma:RelationsOnDisc} says that~$R^{disc}(B_r(0)) = R(B_R(0))$, so the evaluation map~$F(B_R(0)) \rightarrow \Vbar$ is injective. Thus~$J_r = \ev_{B_R(0)} \after P_R$ is injective.
\end{proof}
The previous proposition implies that~$\mathbf{V}\mathbf{F}\V$ is discrete.
The comparison map~$I$ induces a map~$\overline{I}: \Vbar \rightarrow \overline{\mathbf{V}\mathbf{F}\V}$ on the products of the weight spaces.
Let~$\widetilde{\mu}$ denote the multiplication maps of~$\mathbf{V}\mathbf{F}\V$. 
\begin{proposition}\label{proposition:VFisIdAsGeometricVertexAlgebras}
  $I$ is a homomorphism w.\,r.\,t.\ the geometric vertex algebra structures, that is,
  for~$m\geq 0$, and~$z \in \C^m \setminus \Delta$, the square
  \begin{ctikzcd}
    \V^{\otimes m} \rar{I^{\otimes m}} \dar{\mu^z} & \mathbf{V}\mathbf{F}\V^{\otimes m}\dar{\widetilde{\mu}^z} \\
    \Vbar \rar{\overline{I}} & \overline{\mathbf{V}\mathbf{F}\V}
  \end{ctikzcd} 
  commutes.
  
  It follows that~$\mathbf{V}\mathbf{F}\V$ satisfies the meromorphicity axiom; hence~$\mathbf{F}\V$ has meromorphic OPE,~$\mathbf{V}\mathbf{F}\V$ is a geometric vertex algebra, and~$I$ is an isomorphism of geometric vertex algebras \[\V \cong \mathbf{V}\mathbf{F}\V \displayperiod \]
\end{proposition}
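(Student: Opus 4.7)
The plan is to verify the commutative square by testing it in each weight space, using the explicit formulas from Section~\ref{subsection:FromPrefactorizationAlgebrasToGeometricVertexAlgebras} together with the injectivity of~$\ev$ on a disc, insertion-at-zero, and~$\D^\times$-equivariance of~$\ev$.

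First, I would reduce to checking equality in each component~$k \in \Z$. For~$a = a_1 \otimes \ldots \otimes a_m \in \V^{\otimes m}$, both sides are elements of~$\overline{\mathbf{V}\mathbf{F}\V} = \prod_k \mathbf{V}\mathbf{F}\V_k$, so it suffices to fix~$k$ and compare~$p_k(\widetilde{\mu}^z(I^{\otimes m}(a)))$ with~$p_k(\overline{I}(\mu(a,z))) = I(p_k(\mu(a,z)))$. Pick~$R>0$ and~$r>0$ so that~$z \in D(r,\ldots,r;R)$; by Proposition~\ref{proposition:InclusionInducesIsomorphismsOnWeightSpaces}, it is enough to prove equality after applying~$P_R$, i.\,e.\ in~$\mathbf{F}\V(B_R(0))_k$. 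Using~$P_{r}(I(a_i)) = [\delta_0 \otimes a_i]$ and~$\sigma^{B_r(0)}_{(+z_i)}([\delta_0 \otimes a_i]) = [\delta_{z_i} \otimes a_i]$, the explicit formula for~$\widetilde{\mu}$ gives
\[
  P_R(p_k(\widetilde{\mu}^z(I^{\otimes m}(a)))) = l^R_k\bigl([(\delta_{z_1} \times \ldots \times \delta_{z_m}) \otimes a_1 \otimes \ldots \otimes a_m]\bigr)\displaycomma
\]
while by definition of~$I$,
\[
  P_R(I(p_k(\mu(a,z)))) = [\delta_0 \otimes p_k(\mu(a,z))]\displayperiod
\]

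Next I would compare these two elements of~$\mathbf{F}\V(B_R(0))_k$ by applying~$\ev_{B_R(0)}$, which is injective on~$\mathbf{F}\V(B_R(0)) = E(B_R(0))/R(B_R(0))$ by Lemma~\ref{lemma:RelationsOnDisc} and the definition of~$R$. For the~$I$-side, insertion-at-zero (Proposition~\ref{proposition:MuFromHolomorphicPrefactorizationAlgebraSatisfiesInsertionAtZero} applied abstractly, i.\,e.\ the axiom for~$\V$ itself) yields
\[
  \ev_{B_R(0)}\bigl([\delta_0 \otimes p_k(\mu(a,z))]\bigr) = \mu(p_k(\mu(a,z)),0) = p_k(\mu(a,z))\displayperiod
\]
For the~$\widetilde{\mu}$-side, I would observe that~$\ev$ is~$\D^\times$-equivariant by Proposition~\ref{proposition:EvaluationIsEquivariant} and that the~$\D^\times$-action on~$\Vbar$ is~$z.v = z^{L_0}v$, whose~$k$-th Laurent coefficient picks out~$p_k$. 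Writing~$l^R_k$ as a contour integral and pulling~$\ev$ inside (permissible since~$\ev$ is a bounded linear map and the integrand is holomorphic into a completely normable subobject), I get~$\ev \after l^R_k = p_k \after \ev$. Combined with~$\ev_{B_R(0)}([(\delta_{z_1}\times\ldots\times\delta_{z_m}) \otimes a]) = \mu(a,z)$, this makes the~$\widetilde{\mu}$-side evaluate to~$p_k(\mu(a,z))$ as well, finishing the commutativity of the square.

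For the consequences, meromorphicity of~$\widetilde{\mu}$ is immediate: the square identifies~$\widetilde{\mu}^z \after I^{\otimes m}$ with~$\overline{I}\after \mu^z$, which in each degree~$k$ lands in the finite-dimensional space~$I(\V_k) \subseteq \mathbf{V}\mathbf{F}\V_k$, and the matrix coefficients are rational in~$z$ because~$\mu$ of~$\V$ is; since~$I$ is an isomorphism on the underlying graded bornological space by Proposition~\ref{proposition:VFisIdAsGradedBVS}, the meromorphicity of~$\mu$ on~$\V$ transports to~$\widetilde{\mu}$ on~$\mathbf{V}\mathbf{F}\V$. Hence~$\mathbf{F}\V$ has meromorphic OPE and~$\mathbf{V}\mathbf{F}\V$ is a geometric vertex algebra, and the square says~$I$ is a morphism, which combined with its bijectivity makes it an isomorphism.

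The main obstacle I anticipate is justifying the interchange~$\ev\after l^R_k = p_k\after \ev$: one must confirm that the contour integral defining~$l^R_k$ converges in a completely normable subobject on which~$\ev$ restricts to a bounded map, so that~$\ev$ commutes with the integral, and then identify the resulting Laurent coefficient in~$\Vbar$ with~$p_k$ using that~$\ev$ is~$\D^\times$-equivariant and that the~$\D^\times$-action on~$\V_j$ is scaling by~$z^j$. Everything else is bookkeeping with the formulas from the previous subsection.
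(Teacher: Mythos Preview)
Your proposal is correct and follows essentially the same approach as the paper: reduce to a fixed weight~$k$, push both sides into~$\mathbf{F}\V(B_R(0))_k$ via~$P_R$, and then separate them using the injective evaluation map on a disc together with~$\D^\times$-equivariance of~$\ev$ and insertion-at-zero for~$\V$. The paper carries out exactly this computation (writing~$l^R_k$ as a contour integral and pulling~$\ev$ through it by boundedness), so your anticipated ``obstacle'' $\ev\circ l^R_k = p_k\circ \ev$ is handled just as you suggest.
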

\begin{proof}
  Let~$k \in \Z$ and~$R>0$ be big enough so that~$z_1,\ldots, z_m \in B_R(0)$.
  The image in~$F(B_R(0))$ of the~$k$-th component of the lower composition evaluated at~$a \in \V^{\otimes m}$ is 
\begin{align*}
  P_R(p_k(\overline{I}(\mu(a)(z)))) &= P_R(I(p_k(\mu(a)(z)))) = \left[\delta_0 \otimes p_k(\mu(a)(z))\right]\displaycomma\\
\end{align*}
and this evaluates to~$p_k(\mu(a)(z))$. 
For the upper composition and~$a$ an elementary tensor~$a_1 \otimes \ldots \otimes a_m \in \V^{\otimes m}$ with~$a_i \in V$ homogeneous, we get
\begin{align*}
  P_R(p_k(\widetilde{\mu}(I^{\otimes m}(a))(z))) &= P_R(p_k(\widetilde{\mu}([\delta_0 \otimes a_1] \otimes \ldots \otimes [\delta_0 \otimes a_m])(z)))\\
  &= P_R(p_k(\widetilde{\mu}([\delta_{(z_1,\ldots,z_m)} \otimes a_1 \otimes \ldots \otimes a_m])))\\
  &= P_R(p_k(\widetilde{\mu}([\delta_z \otimes a])))\\
  &= l^R_k([\delta_z \otimes a])\\
  &= \ootpi \oint w^{-k-1} w.[\delta_z \otimes a]dw 
\end{align*}
and this evaluates to
\begin{align*}
  &\ootpi \oint w^{-k-1} w.\ev_{B_R(0)}([\delta_z \otimes a])dw 
  \intertext{because evaluation is equivariant. This equals}
  &\ootpi \oint w^{-k-1} w.\mu(a,z)dw= p_k(\mu(a,z))
\end{align*}
in~$\V_k$.
Two elements of~$\mathbf{V}\mathbf{F}\V_k$ which evaluate to the same element of~$\V_k$ must be equal, so the square commutes and~$I$ is a homomorphism.

The meromorphicity axiom for~$(\V, \mu)$ implies the meromorphicity axiom for~$(\mathbf{V}\mathbf{F}\V, \widetilde{\mu})$ since~$I$ and hence~$I^{\otimes m}$ and~$\overline{I}$ are isomorphisms of vector spaces by Proposition~\ref{proposition:VFisIdAsGradedBVS}.
\end{proof} 
\begin{remark}
  It can be shown that the image of~$\V$ in~$\mathbf{F}\V(B_r(0))$ is dense for all~$0< r \leq \infty$.
  Furthermore, for~$0<r\leq R \leq \infty$, the maps
  \[
  \V \hookrightarrow \mathbf{F}\V(B_r(0)) \hookrightarrow \mathbf{F}\V(B_R(0)) \hookrightarrow \Vbar  
  \] 
  are embeddings of vector spaces, all of which are complete bornological spaces.
\end{remark}
\Section{Factorization Algebras}\label{section:FactorizationAlgebras}
In this section, we prove that~$\mathbf{F}\V$ is a factorization algebra with values in the symmetric monoidal category of complete bornological spaces. 
We start by recalling the definition of factorization algebras.
\begin{definition}
  A prefactorization algebra~$F$ on a topological space~$X$ with values in a symmetric monoidal category~$\mathcal{C}$ is called \emph{multiplicative} if the map
  \[
    M^{U,V}_{U\sqcup V}: F(U) \otimes F(V) \rightarrow F(U \sqcup V)
  \]
  is an isomorphism for all disjoint opens~$U,V \subseteq X$ and the unit map~$\mathbf{1}_\mathcal{C} \rightarrow F(\emptyset)$ is an isomorphism.
\end{definition}
In our context,~$\otimes$ is the completed tensor product~$\barotimes$ of complete bornological vector spaces defined in\ref{definition:CompletedTensorProduct}.

Recall that a presheaf~$G$ on a topological space~$X$ is called a \emph{sheaf} if it satisfies descent along open covers, that is, if
\[
G(Y) \rightarrow \prod_{U \in \mathcal{U}} G(U) \rightrightarrows \prod_{U,V\in\mathcal{U}} G(U \cap V)
\]
is a limit diagram for every open cover~$\mathcal{U}$ of every open subset~$Y$ of~$X$.
Similarly, a precosheaf~$F$ on~$X$ is called a \emph{cosheaf} if it satisfies codescent along open covers, that is, if the diagram 
\begin{align}
\bigoplus_{U,V\in\mathcal{U}} F(U \cap V) \rightrightarrows \bigoplus_{U\in \mathcal{U}} F(U) \rightarrow F(Y) \label{equation:Cosheaf}
\end{align}
is a colimit diagram. 
In the definition of a factorization algebra, we demand codescent for special open covers called Weiss covers. 
\begin{definition}
  A family~$\mathcal{U}$ of open subsets of a topological space~$X$ is called a \emph{Weiss cover} if, for every finite subset~$F\subseteq X$, there is a~$U\in\mathcal{U}$ with~$F \subseteq U$.  
  A precosheaf on a topological space~$X$ is a~\emph{Weiss cosheaf} if diagram~(\ref{equation:Cosheaf}) is a colimit diagram for all open subsets~$Y \subseteq X$ and Weiss covers~$\mathcal{U}$ of~$Y$.
\end{definition}
Every Weiss cover is an open cover.
A family~$\mathcal{U}$ of subsets of a space~$X$ is a Weiss cover if and only if~$\{U^n\mid U \in \mathcal{U}\}$ is an open cover of~$X^n$ for all~$n \geq 0$. 
Another way of phrasing the (Weiss) cosheaf condition is the following, if the category in which the precosheaf~$F$ takes values has all colimits: For~$Y \subseteq X$ open and all (Weiss) open covers~$\mathcal{U}$ of~$Y$ closed under taking pairwise intersections, the natural map
\[
  \colim_{U \in \mathcal{U}} F(U) \longrightarrow F(X)
\]
is an isomorphism, where~$\mathcal{U}$ considered as a poset under inclusions of subsets of~$Y$, and the colimit is taken over the extension maps of~$F$.

\begin{definition}
A \emph{factorization algebra} is a prefactorization algebra which is multiplicative and whose underlying precosheaf is a Weiss cosheaf.   
\end{definition} 
We fix a geometric vertex algebra~$\V$ and use the abbreviations~$E=\mathbf{E}\V$ and~$F=\mathbf{F}\V$. 
\RelatedWork{}
Factorization algebras, as considered here, were introduced by Costello and Gwilliam and the main reference is~\cite{CostelloGwilliam} (see also~\cite{GwilliamThesis}).
We include multiplicativity in the definition of factorization algebra; in~\cite{CostelloGwilliam} multiplicativity is a property of some factorization algebras.
Earlier, Beilinson and Drinfeld introduced factorization algebras in algebraic geometry~\cite{BeilinsonDrinfeldChiralAlgebras}.
The factorization algebras we have in mind are typically not locally constant, since the vertex algebra~$\mathbf{V}F$ arises from a commutative algebra if~$F$ is locally constant, more specifically a~$\Z$-graded commutative algebra with derivation of degree~$1$.  
The works~\cite{FrancisGaitsgoryChiralKoszulDuality},~\cite{LurieHA},~\cite{GinotNotes},~\cite{MatsuokaDescent},~\cite{KnudsenHigherEnvelopingAlgebras}, and~\cite{LejayThesis} contain, among other things, material relevant to locally constant factorization algebras and~$E_n$-algebras, generalizing to higher dimensions and target categories like the~$\infty$-category of chain complexes.
See~\cite{GwilliamRejzner} and~\cite{BeniniPerinSchenkel} for the relationship between factorization algebras and algebraic quantum field theory.

\Subsection{More Functional-Analytic Preliminaries}
There are two more topics in functional analysis which we did not treat in our summary of bornological spaces in Section~\ref{subsection:FunctionalAnalyticPrelim} and reference in the proofs of multiplicativity of~$F$ and Weiss codescent.
The first is nuclearity, the bornological vector space~$\mathcal{O}'(U)$ of analytic functionals on an open~$U \subseteq \C^n$ is nuclear.
References for nuclear spaces include~\cite{Treves} and~\cite{HogbeNlendMoscatelli}, the latter includes the bornological setting. 
The second is the generalization of the identity theorem for holomorphic functions with values in a Banach space to the case of a complete bornological space. 

We call a completely normable bornological space~\emph{Hilbertable} if one of the complete norms inducing its bornology is a Hilbert space norm. 
The notion of Hilbert-Schmidt map between Hilbert spaces only depends on the bornologies of source and target. 
A complete bornological space is~\emph{nuclear} if every bounded subset~$B$ is bounded in a Hilbertable subobject~$H_1$ so that~$H_1$ is contained in a Hilbertable subobject~$H_2$ such that the inclusion~$H_1 \subseteq H_2$ is Hilbert-Schmidt. 
Let~$\Hilb$ denote the category of Hilbert spaces and bounded linear maps.
The functor from~$\Hilb$ to~$\CBVS$ sending a Hilbert space to its underlying complete bornological space is a full embedding with image the Hilbertable spaces.
In~\cite[Chapter III]{HogbeNlendMoscatelli}, a locally convex space~$X$ is defined to be~\emph{nuclear} if~$X'$ is a nuclear bornological space, and this definition is shown to equivalent to some of the usual ones, e.\,g., in terms of Hilbert-Schmidt maps.  
Since~$\mathcal{O}(X)$ is nuclear for~$X \subseteq \C^n$ open, it follows that~$\mathcal{O}'(X)$ is a nuclear bornological space.  
Our next goal is to prove that~$\mathcal{O}'$ is a cosheaf for Stein open covers of Stein subsets of~$\C^n$. 
A Stein open cover is an open cover in which every open set is Stein. 
The intersection of two Stein open subsets is again Stein. 
For~$X \subseteq \C^n$ open, let~$\mathcal{O}_{L^2}(X)$ denote the Hilbert space of holomorphic~$L^2$-functions on~$X$.
\begin{proposition}
  The presheaf~$\mathcal{O}_{L^2}$ of Hilbert spaces satisfies the sheaf condition for finite covers.  
\end{proposition}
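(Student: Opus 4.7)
The plan is to unpack the sheaf condition in the category~$\Hilb$ for a finite open cover~$X = U_1 \cup \ldots \cup U_n$ and check equalizer and surjectivity by hand. Let~$\Phi : \mathcal{O}_{L^2}(X) \to \bigoplus_i \mathcal{O}_{L^2}(U_i)$ be the map induced by the restrictions~$f \mapsto f|_{U_i}$. The assertion is that~$\Phi$ is a bounded isomorphism onto the equalizer of the two restriction maps~$\bigoplus_i \mathcal{O}_{L^2}(U_i) \rightrightarrows \bigoplus_{i,j} \mathcal{O}_{L^2}(U_i \cap U_j)$.

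First I would observe that each restriction~$\mathcal{O}_{L^2}(X) \to \mathcal{O}_{L^2}(U_i)$ is bounded with norm at most one, so~$\Phi$ is bounded (with norm at most~$\sqrt{n}$) and clearly lands in the equalizer. Injectivity is immediate: an element~$f$ of~$\mathcal{O}_{L^2}(X)$ is in particular a holomorphic function on~$X$, and if each~$f|_{U_i}$ vanishes as an~$L^2$-class then, by continuity of holomorphic functions,~$f$ vanishes pointwise on each~$U_i$, hence on~$X$.

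For surjectivity onto the equalizer, I would take a compatible family~$(f_i) \in \bigoplus_i \mathcal{O}_{L^2}(U_i)$ with~$f_i|_{U_i \cap U_j} = f_j|_{U_i \cap U_j}$ in~$\mathcal{O}_{L^2}(U_i \cap U_j)$. Since each~$f_i$ has a holomorphic, hence continuous, representative, the compatibility as~$L^2$-classes on the overlap forces pointwise agreement on~$U_i \cap U_j$. Therefore the~$f_i$ glue to a holomorphic function~$f$ on~$X$. To finish I need to bound the~$L^2$-norm: because~$X \subseteq \bigcup_i U_i$ and~$|f|^2$ restricts to~$|f_i|^2$ on~$U_i$, subadditivity gives
\[
\|f\|_{L^2(X)}^2 \;\leq\; \sum_{i=1}^n \int_{U_i} |f|^2 \;=\; \sum_{i=1}^n \|f_i\|^2_{L^2(U_i)}\displayperiod
\]
This shows~$f \in \mathcal{O}_{L^2}(X)$ and simultaneously that~$\Phi^{-1}$ (defined on the equalizer) is bounded. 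Hence~$\Phi$ is an isomorphism in~$\Hilb$.

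There is no real obstacle here; the only subtle point is the move from~$L^2$-compatibility on overlaps to honest pointwise gluing, which is handled by selecting the canonical holomorphic representatives. The finiteness of the cover is essential: it is used in the factor of~$n$ bounding~$\|\Phi\|$ and in the finite subadditivity estimate above, and both estimates would fail for general infinite covers.
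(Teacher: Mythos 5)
Your proposal is correct and follows essentially the same route as the paper: glue the holomorphic representatives using the ordinary sheaf property of $\mathcal{O}$, then use the finite subadditivity estimate $\|f\|^2_{L^2(X)} \leq \sum_i \|f_i\|^2_{L^2(U_i)}$ to see that the glued function is $L^2$ and that the gluing map is bounded. The paper's proof is just a terser version of the same argument, likewise noting that finite limits in $\Hilb$ are computed on underlying vector spaces.
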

\begin{proof}
  Finite products and kernels exist in the category of Hilbert spaces and bounded maps, and their underlying vector spaces are products resp. kernels in the category of vector spaces. 
  Let~$V \subseteq \C^m$ be open and~$V_1, \ldots, V_n$ cover~$V$.
If~$f \in \mathcal{O}(V)$ denotes the result of gluing~$L^2$-functions on the~$V_i$, then
  \[
  ||f||^2_{L^2(V)} \leq  ||f|_{V_1}||^2_{L^2(V_1)} + \ldots + ||f|_{V_n}||^2_{L^2(V_n)} \displaycomma
\]
so~$\mathcal{O}_{L^2}$ is a sheaf of vector spaces. 
This inequality also proves that the gluing map is bounded.
\end{proof}
\begin{corollary}\label{corollary:L2AnalyticFunctionalsCosheafForFiniteCovers}
  The precosheaf~$\mathcal{O}'_{L^2}$ of Hilbert spaces satisfies the cosheaf condition for finite covers.  
\end{corollary}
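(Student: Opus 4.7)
The plan is to derive the cosheaf property of~$\mathcal{O}'_{L^2}$ from the sheaf property of~$\mathcal{O}_{L^2}$ established in the preceding proposition, by applying the Banach-dual functor to the sheaf equalizer. The substantive input is the Hahn--Banach theorem, which gives, for every bounded linear map~$f:B\to C$ of Banach spaces, the natural isomorphism~$(\ker f)^{\ast} \cong B^{\ast}/\overline{\im(f^{\ast})}$. Applied to~$f = e_0 - e_1$, this means that Banach-dualization carries equalizer diagrams to coequalizer diagrams. Combined with the obvious compatibility of the Banach dual with finite products (the dual of a finite product of Banach spaces is the direct sum of the duals), the dual of a sheaf-type equalizer diagram is a cosheaf-type coequalizer diagram.

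The first step is to identify the Banach dual of the restriction maps in the sheaf diagram with the extension maps of~$\mathcal{O}'_{L^2}$. For~$U\subseteq W$ open, the Banach dual of the restriction~$\mathcal{O}_{L^2}(W)\to \mathcal{O}_{L^2}(U)$ sends~$\alpha \in \mathcal{O}'_{L^2}(U)$ to the functional~$f \mapsto \alpha(f|_U)$ on~$\mathcal{O}_{L^2}(W)$, which is precisely the pushforward of analytic functionals along the inclusion~$U \subseteq W$, hence the extension map of the precosheaf~$\mathcal{O}'_{L^2}$.

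The second step is to apply the Banach dual to the equalizer diagram
\[
  \mathcal{O}_{L^2}(V) \longrightarrow \prod_{i=1}^n \mathcal{O}_{L^2}(V_i) \rightrightarrows \prod_{i,j=1}^n \mathcal{O}_{L^2}(V_i \cap V_j)
\]
furnished by the preceding proposition. Using the two observations above, this yields the coequalizer diagram
\[
  \bigoplus_{i,j=1}^n \mathcal{O}'_{L^2}(V_i \cap V_j) \rightrightarrows \bigoplus_{i=1}^n \mathcal{O}'_{L^2}(V_i) \longrightarrow \mathcal{O}'_{L^2}(V)\displaycomma
\]
which is precisely the cosheaf condition for the finite cover~$\{V_i\}_{i=1}^n$ of~$V$. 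The main point to check is the Hahn--Banach-based description of the dual of a kernel, which is standard; the other identifications are immediate from the definitions of the Banach dual and the precosheaf structure on~$\mathcal{O}'_{L^2}$.
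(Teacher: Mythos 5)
Your proposal is correct and follows the same overall strategy as the paper: dualize the sheaf equalizer furnished by the preceding proposition. The difference lies in how the key step---that dualization turns the equalizer into a coequalizer---is justified. The paper disposes of this in one line by observing that taking duals is a functor $\Hilb \rightarrow \Hilb^{\op}$ which, being an equivalence of categories, preserves finite limits; the cosheaf diagram is then just the image of the sheaf diagram under this equivalence. You instead work with the Banach dual and Hahn--Banach, identifying $(\ker f)^{\ast}$ with $B^{\ast}/\overline{\im (f^{\ast})}$ and the dual of a finite product with the direct sum of the duals. This is more hands-on and makes explicit why the dual of the restriction maps are the extension maps of $\mathcal{O}'_{L^2}$, which the paper leaves implicit. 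One caveat: the identity $(\ker f)^{\ast}\cong B^{\ast}/\overline{\im(f^{\ast})}$ with the \emph{norm} closure is not valid for arbitrary Banach spaces---Hahn--Banach gives $(\ker f)^{\ast}\cong B^{\ast}/(\ker f)^{\perp}$, and the annihilator $(\ker f)^{\perp}$ is the weak-$\ast$ closure of $\im(f^{\ast})$, which coincides with the norm closure only under reflexivity. Since everything here is a Hilbert space, this is harmless, but you should say that you are using reflexivity (or simply the orthogonal-complement description of annihilators in Hilbert space); you should also note that the coequalizer in the category of Hilbert spaces and bounded maps is indeed the quotient by the \emph{closure} of the image, so that your quotient really computes the colimit appearing in the cosheaf condition.
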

\begin{proof}
  Taking the dual is a functor~$\Hilb \rightarrow \Hilb^{\op}$ which preserves finite limits because it is an equivalence of categories.
\end{proof}
If~$U$ and~$X$ are open subsets of~$\C^n$, we use the notation~$U \subset\subset X$ to mean that~$\overline{U} \subseteq X$ and~$\overline{U}$ is compact.  
\begin{proposition}\label{proposition:AnalyticFunctionalOnSteinColimitOfL2s}
  If an open~$X \subseteq \C^n$ is Stein, then
  \[
    \mathcal{O}'(X) \cong \colim_{U \subset\subset X} \mathcal{O}'_{L^2}(U)
  \]
  in the category of bornological spaces. 
\end{proposition}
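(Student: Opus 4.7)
The plan is to construct mutually inverse bounded linear maps between~$\mathcal{O}'(X)$ and~$\colim_{U \subset\subset X} \mathcal{O}'_{L^2}(U)$ and to check compatibility with the bornologies. Since the diagram~$\{U : U \subset\subset X\}$ ordered by inclusion is filtered, the colimit at the level of vector spaces is a union-type object, and bounded sets in the colimit are those bounded in a single~$\mathcal{O}'_{L^2}(U)$.

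For the forward map into~$\mathcal{O}'(X)$: for each~$U \subset\subset X$, restriction of holomorphic functions is a bounded linear map~$\mathcal{O}(X) \to \mathcal{O}_{L^2}(U)$, since any~$f \in \mathcal{O}(X)$ is bounded on the compact set~$\overline{U}$ and~$U$ has finite volume, so~$\|f|_U\|_{L^2(U)} \leq \mathrm{vol}(U)^{1/2} \|f\|_{\infty,\overline{U}}$. Dualizing produces bounded linear maps~$\mathcal{O}'_{L^2}(U) \to \mathcal{O}'(X)$ that form a cocone over the filtered system and hence induce a bounded linear map~$\Phi$ from the colimit.

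For the inverse: a general~$\alpha \in \mathcal{O}'(X)$ satisfies~$|\alpha(f)| \leq C\|f\|_{\infty,K}$ for some compact~$K \subseteq X$ and~$C > 0$. The Bergman-type interior estimate, obtained by representing values of holomorphic functions as mean values over polydiscs contained in~$X$, provides an open~$U$ with~$K \subset U \subset\subset X$ and a constant~$C' > 0$ such that~$\|f\|_{\infty,K} \leq C' \|f\|_{L^2(U)}$ for every~$f$ holomorphic on a neighborhood of~$\overline{U}$. Hence~$\alpha$ restricted to the image of~$\mathcal{O}(X)$ in~$\mathcal{O}_{L^2}(U)$ is continuous in the~$L^2$-norm. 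Extending uniquely to~$\tilde\alpha \in \mathcal{O}'_{L^2}(U)$ requires density of~$\mathcal{O}(X)|_U$ in~$\mathcal{O}_{L^2}(U)$, which is where the Stein hypothesis enters: cofinally in the directed system one may choose~$U$ to be a holomorphically convex Runge open subset of~$X$, so that Oka--Weil approximation makes~$\mathcal{O}(X)|_U$ dense in~$\mathcal{O}(U)$ in the compact-open topology; this upgrades to~$L^2$-density by first approximating an~$L^2$-holomorphic function on~$U$ by its restrictions from a slightly larger Runge subdomain~$U'$ (using standard Bergman-kernel arguments) and then uniformly by elements of~$\mathcal{O}(X)$. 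The extension~$\tilde\alpha$ represents a preimage of~$\alpha$ in the colimit.

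To upgrade this vector-space bijection to an isomorphism of bornological spaces, I would check that the extension procedure is uniform in the data~$(C,K)$: a bornologically bounded subset of~$\mathcal{O}'(X)$ is controlled by a single pair~$(C,K)$, and the interior estimate together with the norm of the extension is uniform over such a subset, so its image lies in a bounded subset of a single~$\mathcal{O}'_{L^2}(U)$. The main obstacle I expect is precisely the~$L^2$-density claim; the interior mean-value estimate and the dualization steps are routine functional analysis, whereas density of~$\mathcal{O}(X)$ in~$\mathcal{O}_{L^2}(U)$ for sufficiently nice~$U$ is where Stein-ness does the essential work.
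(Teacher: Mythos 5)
Your forward map and the interior estimate $\|f\|_{\infty,K}\leq C'\|f\|_{L^2(U)}$ coincide with the paper's, but the step you flag as the main obstacle is a genuine gap, not a routine verification. You need $\mathcal{O}(X)|_U$ to be dense in $\mathcal{O}_{L^2}(U)$ \emph{in the $L^2$-norm} for a cofinal family of $U\subset\subset X$, and the mechanism you propose (Runge-ness plus Oka--Weil) only yields density in the compact-open topology, which does not control $L^2$-mass near $\partial U$. Already your intermediate claim that functions holomorphic on a neighborhood of $\overline{U}$ are $L^2$-dense in $\mathcal{O}_{L^2}(U)$ fails for some holomorphically convex Runge $U$: for a slit disc $U=D\setminus[0,1)$ one has $L^2(U)=L^2(D)$, Bergman point evaluations force any $L^2$-limit of functions holomorphic on $D$ to extend across the slit, so a branch of $\sqrt{z}$ lies in $\mathcal{O}_{L^2}(U)$ but not in the closure of $\mathcal{O}(D)|_U$. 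Making the density true requires boundary regularity of $U$ on top of Runge-ness and is a nontrivial theorem about Bergman spaces (of, say, smoothly bounded strongly pseudoconvex sublevel sets), which you neither state precisely nor prove. Since your injectivity of $\Phi$ and the well-definedness of your inverse both also rest on this density, they are not established either.

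The paper's proof avoids $L^2$-density altogether, and this is the fix you should adopt. For surjectivity it extends $\alpha\circ\res^{-1}$ from $\im\res\subseteq\mathcal{O}_{L^2}(U)$ to all of $\mathcal{O}_{L^2}(U)$ by Hahn--Banach with respect to the seminorm $\|\cdot\|_{\infty,K}$, which your interior estimate dominates by $\|\cdot\|_{L^2(U)}$; no density and no Stein hypothesis is used there. Stein-ness enters only in the injectivity step: one may assume $\overline{U}$ is $\mathcal{O}(X)$-convex (the $\mathcal{O}(X)$-convex hull of a compact set is compact), one pushes the functional forward to a slightly larger $V\subset\subset X$ and tests it against $g\in\mathcal{O}_{L^2}(V)$; such $g$ are bounded on $\overline{U}$, so the classical Oka--Weil theorem on uniform approximation on $\mathcal{O}(X)$-convex compacts already forces the pushforward to vanish --- only sup-norm density on $\overline{U}$ is ever needed. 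With that replacement, the remainder of your outline, including the uniformity of the construction over a bounded set controlled by a single pair $(C,K)$, goes through.
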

\begin{proof}
  The comparison map is 
  \labelledMapMapsto{\Phi}{\colim_{U \subset\subset X} \mathcal{O}'_{L^2}(U)}{\mathcal{O}'(X)}{\alpha}{[f \mapsto \alpha(f|_U)]}
  whose well-definedness uses the fact that supremum norm~$||\ ||_{\infty, \overline{U}}$ of continuous functions on~$\overline{U}$ bounds~$||\ ||_{L^2(U)}$ from above.
  We prove that~$\Phi$ is surjective.
  Let~$\alpha \in \mathcal{O}'(X)$. There is a compact set~$K \subset X$ s.\,t.~$\alpha$ is continuous as a map from~$(\mathcal{O}(X), ||\ ||_{\infty, K})$ to~$\C$.
  Let~$U\subset\subset X$ be a neighborhood of~$K$. 
  Let~$\res: (\mathcal{O}(X), ||\ ||_{\infty, K})\rightarrow (\mathcal{O}_{L^2}(U),||\ ||_{\infty, K})$ denote the restriction map.
  It is continuous. 
  Then
  \[
    \widetilde{\alpha} = \alpha \after \res^{-1} : \im \res \rightarrow \C
  \]
  is continuous, in particular well-defined. 
  By the Hahn-Banach theorem,~$\widetilde{\alpha}$ extends to a continuous linear functional~$\beta$ defined on~$(\mathcal{O}_{L^2}(U),||\ ||_{\infty, K})$ with operator norm~$||\beta|| = ||\widetilde{\alpha}|| = ||\alpha||$. 
  The map 
  \[
    i : (\mathcal{O}_{L^2}(U),||\ ||_{L^2(U)}) \rightarrow (\mathcal{O}_{L^2}(U),||\ ||_{\infty, K})
  \]
  given by the identity map on elements is continuous as a consequence of the Cauchy integral formula. 
  Then the class of~$\beta \after i$ is a preimage of~$\alpha$ under~$\Phi$.

  We prove that~$\Phi$ is injective. 
  Let~$\alpha \in \mathcal{O}'_{L^2}(U),\ U \subset\subset X$, represent a class in~$\ker \Phi$.
  This means that~$\alpha(f|_U)=0$ for all~$f \in \mathcal{O}(X)$.
  Without loss of generality, we may assume that~$\overline{U}$ is~$\mathcal{O}(X)$-convex, since the~$\mathcal{O}(X)$-convex hull of a compact set in a Stein manifold is again compact. 
  Let~$V\subset\subset X$ be a neighborhood of~$\overline{U}$. 
  It suffices to see that~$\alpha(g|_U) = 0$ for all~$g \in \mathcal{O}_{L^2}(V)$.
  Let~$A(g) = \alpha(g|_{U})$ for~$g \in \mathcal{O}_{L^2}(V)$. 
  We now argue that the map~$A : \mathcal{O}_{L^2}(V) \rightarrow \C$ is continuous w.\,r.\,t.\ the semi-norm~$||\ ||_{\infty,\overline{U}}$ on the source.
  First, the semi-norm~$||\ ||_{\infty, \overline{U}}$ is at least as large as the~$L^2$-norm on~$U$ with respect to which~$\alpha$ is continuous by assumption. 
  Second, recall that the inclusion~$\mathcal{O}_{L^2}(V) \hookrightarrow \mathcal{O}(V)$ is continuous.
  In particular, up to a constant factor, the semi-norm~$||\ ||_{L^2(V)}$ is at least as large as the continuous semi-norm~$||\ ||_{\infty, \overline{U}}$ on~$\mathcal{O}(V)$.
  This concludes the argument that~$A$ is continuous for the semi-norm~$||\ ||_{\infty,\overline{U}}$.
  This is the semi-norm of uniform convergence on~$\overline{U}$.
  Since~$\overline{U}$ is~$\mathcal{O}(X)$-convex, every holomorphic function on a neighborhood of~$\overline{U}$ can be uniformly approximated on~$\overline{U}$ by holomorphic functions on~$X$; see~\cite[Chapter VII, A, Theorem 6]{GunningRossi}.
  The restriction to~$V$ of a holomorphic function on~$X$ is a holomorphic~$L^2$-function on~$V$ because~$\overline{V}$ is compact. 
  By assumption,~$\alpha$ and thus~$A$ vanish on functions which extend holomorphically to~$X$. 
  The set of such functions is~$||\ ||_{\infty,\overline{U}}$-dense in~$\mathcal{O}_{L^2}(V)$, the source of~$A$, so~$A=0$. 
  Hence the class of~$\alpha$ is zero in the colimit.

  It remains to prove that~$\Phi^{-1}$ is bounded. 
  Let~$B \subseteq \mathcal{O}'(X)$ be bounded.
  There is a compact~$K \subseteq X$ and a~$C>0$ with~$||\alpha|| \leq C$ for all~$\alpha \in B$ where the operator norm is taken w.\,r.\,t.~$||\ ||_{\infty,K}$ on~$\mathcal{O}(X)$.
  Let~$U\subset\subset X$ be a neighborhood of~$K$.
  Then~$\Phi^{-1}(\alpha)$ is represented by~$\beta \after i \in \mathcal{O}'_{L^2}(U)$, with
  \[
    ||\beta \after i || \leq ||\beta|| ||i|| = ||\alpha|| ||i|| \leq C ||r||\displayperiod
  \]
  Since~$C||r||$ is independent of~$\alpha$, so~$\Phi^{-1}(\alpha)$ contained in the image of a bounded subset in~$\mathcal{O}'_{L^2}(U)$, namely the closed ball of radius~$C ||i||$. 
\end{proof}
\begin{proposition}\label{proposition:AnalyticFunctionalsFormACosheafForSteinCoversOfSteinSubsetsOfCn}
  The precosheaf~$\mathcal{O}'$ of complete bornological spaces satisfies the cosheaf condition for Stein open covers of Stein subsets of~$\C^n$.
\end{proposition}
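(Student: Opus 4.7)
The plan is to reduce the statement to the finite-cover cosheaf condition for $\mathcal{O}'_{L^2}$ from Corollary~\ref{corollary:L2AnalyticFunctionalsCosheafForFiniteCovers}, using the filtered colimit presentation of Proposition~\ref{proposition:AnalyticFunctionalOnSteinColimitOfL2s}. Fix a Stein open $Y \subseteq \C^n$ and a Stein open cover $\mathcal{U}$ of $Y$. Since finite intersections of Stein opens in $\C^n$ are Stein, I close $\mathcal{U}$ under finite intersections without leaving the Stein world; the cosheaf condition then becomes the assertion that
$$\Psi : \colim_{U \in \mathcal{U}} \mathcal{O}'(U) \longrightarrow \mathcal{O}'(Y)$$
is an isomorphism in $\CBVS$, the colimit being taken over the poset $(\mathcal{U}, \subseteq)$.

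Applying Proposition~\ref{proposition:AnalyticFunctionalOnSteinColimitOfL2s} to each $\mathcal{O}'(U)$ and to $\mathcal{O}'(Y)$, and commuting the resulting filtered colimits, I rewrite $\Psi$ as the map
$$\colim_{(U,W):\, W \subset\subset U,\, U\in \mathcal{U}} \mathcal{O}'_{L^2}(W) \;\longrightarrow\; \colim_{W \subset\subset Y,\, W \text{ Stein}} \mathcal{O}'_{L^2}(W)$$
induced by the forgetful functor $(U, W) \mapsto W$. Both indexing categories are filtered, and the key task is to show that every $\mathcal{O}'_{L^2}(W)$ with $W \subset\subset Y$ Stein can be expressed, coherently in $W$, using terms $\mathcal{O}'_{L^2}(W')$ from the source category.

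The assembly uses the finite-cover cosheaf property. Given such a $W$, compactness of $\overline{W}$ yields a finite subfamily $U_1, \ldots, U_m \in \mathcal{U}$ with $\overline{W} \subseteq U_1 \cup \ldots \cup U_m$; by slightly shrinking, pick Stein opens $W_i \subset\subset U_i$ whose union still covers $\overline{W}$. Then $\{W \cap W_i\}_i$ is a finite Stein open cover of $W$, and Corollary~\ref{corollary:L2AnalyticFunctionalsCosheafForFiniteCovers} presents $\mathcal{O}'_{L^2}(W)$ as the coequalizer of $\bigoplus_{i,j} \mathcal{O}'_{L^2}(W \cap W_i \cap W_j) \rightrightarrows \bigoplus_i \mathcal{O}'_{L^2}(W \cap W_i)$. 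Each single intersection $W \cap W_i$ is relatively compact in $U_i$, and each double intersection $W \cap W_i \cap W_j$ is relatively compact in $U_i \cap U_j \in \mathcal{U}$ (this is where closing $\mathcal{U}$ under finite intersections is used), so the whole coequalizer lives in the source of $\Psi$ and produces a candidate preimage for the image of $\mathcal{O}'_{L^2}(W)$.

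The hard part will be to check that this local inversion glues into a well-defined two-sided inverse of $\Psi$ on the colimit and that the resulting bijection is a bornological, not merely linear, isomorphism. For well-definedness, I would compare two choices of subcover $\{U_i\}$ and $\{U'_j\}$ by passing to a common refinement $\{W_i \cap W'_j\}$ and invoking Corollary~\ref{corollary:L2AnalyticFunctionalsCosheafForFiniteCovers} once more; compatibility with enlargements $W \subseteq \widetilde W$ reduces to a similar refinement argument together with the transitivity of restriction of analytic functionals. The bornological statement follows because on each side of $\Psi$ a subset is bounded precisely when it comes from a bounded subset of some $\mathcal{O}'_{L^2}(W)$, and the constructed inverse maps such bounded subsets into bounded subsets coming from the $\mathcal{O}'_{L^2}(W \cap W_i)$, so $\Psi$ is bibounded.
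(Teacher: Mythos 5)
Your proposal is correct and follows essentially the same route as the paper: both reduce to the filtered-colimit presentation $\mathcal{O}'(X)\cong\colim_{W\subset\subset X}\mathcal{O}'_{L^2}(W)$ of Proposition~\ref{proposition:AnalyticFunctionalOnSteinColimitOfL2s} and then invoke the finite-cover codescent of Corollary~\ref{corollary:L2AnalyticFunctionalsCosheafForFiniteCovers} after extracting, by compactness of $\overline{W}$, a finite subfamily of~$\mathcal{U}$ whose shrinkings still cover~$\overline{W}$. The coherence check you defer as ``the hard part'' is exactly what the paper's auxiliary filtered poset~$\mathcal{F}$ of finite, intersection-closed families of opens~$V$ with $V\subset\subset U$ for some $U\in\mathcal{U}$ is designed to absorb: rewriting both sides as colimits over~$\mathcal{F}$ makes the local gluings compatible by functoriality, rather than by a case-by-case refinement argument.
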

\begin{proof}
  Let~$Y$ be a Stein subset of~$\C^n$ and~$\mathcal{U}$ be a Stein open cover of~$Y$ closed under pairwise intersections.
  Corollary~\ref{corollary:L2AnalyticFunctionalsCosheafForFiniteCovers} implies that~$\mathcal{O}'_{L^2}$ satisfies the cosheaf property in the category of complete bornological spaces because the inclusion~$\Hilb \hookrightarrow \CBVS$ preserves finite colimits. 
  In the following,~$\mathcal{F}$ ranges over all finite sets of open subsets of~$Y$ closed under intersection such that, for every~$V \in \mathcal{F}$, there is a~$U \in \mathcal{U}$ with~$V \subset\subset U$. 
  The collection of such~$\mathcal{F}$ is a partially ordered set w.\,r.\,t.\ inclusions. It is filtered because~$\mathcal{F}$ and~$\mathcal{F}'$ are both contained in the closure of~$\mathcal{F}$ and~$\mathcal{F}'$ under pairwise intersections. 
  The map~$\colim_{U \in \mathcal{U}} \mathcal{O}'(U) \rightarrow \mathcal{O}'(Y)$ factors as 
  \begin{align}
    \colim_{U \in \mathcal{U}} \mathcal{O}'(U) &\cong \colim_{U \in \mathcal{U}} \colim_{V \subset\subset U}\mathcal{O}'_{L^2}(V)&\text{(Proposition~\ref{proposition:AnalyticFunctionalOnSteinColimitOfL2s})}\label{display:AnalyticFunctionalsCosheaf:first}\\ 
    &\cong \colim_{\mathcal{F}} \colim_{V \in \mathcal{F}}\mathcal{O}'_{L^2}(V)\label{display:AnalyticFunctionalsCosheaf:IntroMathcalF} \\
    &\cong \colim_{\mathcal{F}} \mathcal{O}'_{L^2}(\cup \mathcal{F})& \text{(Corollary~\ref{corollary:L2AnalyticFunctionalsCosheafForFiniteCovers})} \label{equation:AnalyticFunctionalsCosheaf:ApplyCosheafForFiniteCover}\\
    &\cong \colim_{W \subset\subset Y} \mathcal{O}'_{L^2}(W)\label{display:AnalyticFunctionalsCosheaf:RemoveMathcalF} \\
    &\cong \mathcal{O}'(Y)\displaycomma &\text{(Proposition~\ref{proposition:AnalyticFunctionalOnSteinColimitOfL2s})}\label{display:AnalyticFunctionalsCosheaf:last}
  \end{align} 
where it remains to justify~(\ref{display:AnalyticFunctionalsCosheaf:IntroMathcalF}) and~(\ref{display:AnalyticFunctionalsCosheaf:RemoveMathcalF}).
Given~$U \in \mathcal{U}$ and~$V \subset\subset U$, we use~$\mathcal{F} = \{V\}$ to define the comparison map in~(\ref{display:AnalyticFunctionalsCosheaf:IntroMathcalF}).
For its inverse, we pick a~$U \in\mathcal{U}$ with~$V \subset\subset U$ given~$\mathcal{F}$ and~$V\in \mathcal{F}$; the resulting map from~$\mathcal{O}'_{L^2}(V)$ to the r.\,h.\,s.\ of~(\ref{display:AnalyticFunctionalsCosheaf:IntroMathcalF})
is independent of the choice of~$U$ because~$\mathcal{U}$ is closed under pairwise intersections. 
For the map in~(\ref{display:AnalyticFunctionalsCosheaf:RemoveMathcalF}), we use 
~$W = \cup \mathcal{F}$. 
For its inverse, let~$W \subset\subset \mathcal{F}$. Pick a finite subcover of~$\mathcal{U}$ of~$\overline{W}$ and shrink it to get a choice of~$\mathcal{F}$. 
This is possible because~$\overline{W}$ is compact. 
\end{proof}
We let~$X \otimes Y$ resp.\ $X \barotimes Y$ denote the tensor product of nuclear locally convex spaces~$X$ and~$Y$ resp.\ the completed tensor product of nuclear locally convex spaces.
We include a proof that dualizing is monoidal, i.\,e., dualizing takes the completed tensor products of nuclear locally convex spaces to the completed tensor product of complete bornological spaces, see Definition~\ref{definition:CompletedTensorProduct}. 
\begin{proposition}\label{proposition:DualOfTensorProductOfNuclearSpaces}
  Let~$X$ and~$Y$ be nuclear locally convex spaces.
  Then
  \MapMapsto{X' \barotimes Y'}{(X \barotimes Y)'}{\alpha \otimes \beta}{ \left[ x\otimes y \mapsto \alpha(x) \beta(y)\right]}
  is an isomorphism of nuclear convex bornological spaces. 
\end{proposition}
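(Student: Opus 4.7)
My plan is to construct the comparison map as the bounded extension of a bounded bilinear pairing, and to invert it using Grothendieck's nuclear representation theorem for continuous bilinear forms. On underlying vector spaces the inverse is classical, and the substantive content is matching the two bornologies.

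First, I would check that the bilinear pairing
\[
  X' \times Y' \longrightarrow (X \barotimes Y)', \qquad (\alpha,\beta) \longmapsto [x \otimes y \mapsto \alpha(x)\beta(y)]
\]
is bounded. If $A \subseteq X'$ and $B \subseteq Y'$ are bounded, there exist continuous semi-norms $p$ on $X$ and $q$ on $Y$ such that $|\alpha(x)| \leq p(x)$ for all $\alpha \in A$ and $|\beta(y)| \leq q(y)$ for all $\beta \in B$. The projective semi-norm $p \otimes q$ is continuous on $X \barotimes Y$ and dominates every $\alpha \otimes \beta$ for $(\alpha,\beta) \in A \times B$, so the image of $A \times B$ is bounded in $(X \barotimes Y)'$. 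By the universal property of the bornological tensor product and the completeness of the target, this extends to a bounded linear map $\Theta : X' \barotimes Y' \to (X \barotimes Y)'$.

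Second, I would construct an inverse on underlying vector spaces. Any $\phi \in (X \barotimes Y)'$ corresponds uniquely to a jointly continuous bilinear form $B_\phi : X \times Y \to \C$. Nuclearity of $X$ (equivalently of $Y$) yields a nuclear representation $B_\phi(x,y) = \sum_n \lambda_n \alpha_n(x) \beta_n(y)$ with $\sum_n |\lambda_n| < \infty$ and with $(\alpha_n), (\beta_n)$ equicontinuous, hence bounded, in $X'$ respectively $Y'$. The partial sums $\sum_n \lambda_n \alpha_n \otimes \beta_n$ lie in a single completely normable subobject of $X' \barotimes Y'$ assembled from the equicontinuous sets containing $(\alpha_n)$ and $(\beta_n)$ together with the $\ell^1$-norm on the coefficients, so the series converges absolutely there and defines $\Theta^{-1}(\phi)$. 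Injectivity of $\Theta$ is immediate since pure tensors $\alpha \otimes \beta$ separate elements of $X \barotimes Y$.

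Third, I would verify that $\Theta^{-1}$ is bounded, which is the crux. A bounded subset $\mathcal{B} \subseteq (X \barotimes Y)'$ is equi-dominated by a single projective semi-norm $p \otimes q$, so the corresponding bilinear forms factor uniformly through the semi-normed completions of $X$ and $Y$. Nuclearity of both $X$ and $Y$ supplies Hilbert-Schmidt factorizations of the canonical inclusions between semi-normed pieces, and from these I would extract nuclear decompositions whose sequences $(\alpha_n), (\beta_n)$ lie in fixed equicontinuous subsets of $X'$ and $Y'$ with a uniform $\ell^1$-bound on $(\lambda_n)$. This places $\Theta^{-1}(\mathcal{B})$ inside a bounded subset of $X' \barotimes Y'$. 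Nuclearity of the resulting bornological space follows from nuclearity of $X \barotimes Y$ as a locally convex space together with the fact recalled earlier that the dual of a nuclear locally convex space is a nuclear bornological space. The main obstacle is precisely this uniform nuclear representation of a bounded family of bilinear forms, simultaneously controlling the equicontinuity of $(\alpha_n), (\beta_n)$ and the $\ell^1$-summability of $(\lambda_n)$: this is where both factors being nuclear is essential, via the built-in Hilbert-Schmidt factorizations, and it is the step that distinguishes nuclear spaces from general locally convex ones.
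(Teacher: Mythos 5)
Your route is genuinely different from the paper's. The paper never invokes the nuclear representation theorem for bilinear forms: it writes $X'$ and $Y'$ as colimits $\colim_p X'_p$, $\colim_q Y'_q$ of Hilbert spaces along Hilbert--Schmidt transition maps, commutes $\barotimes$ past the colimits, replaces the projective tensor product by the Hilbert--Schmidt one (legitimate in the colimit because the transition maps are Hilbert--Schmidt), and then uses that $(\blank)'$ is an equivalence $\Hilb \rightarrow \Hilb^{\op}$ under which $\barotimes_{HS}$ is self-dual, finally identifying $\colim_{p,q}(X_p\otimes_{HS}Y_q)'$ with $\bigl(\lim_{p,q} X_p\otimes_{HS}Y_q\bigr)' \cong (X\otimes Y)'$. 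All of the analytic content of your argument --- the ``uniform nuclear representation of a bounded family of bilinear forms'' that you correctly identify as the crux --- is absorbed there into the single fact that Hilbert--Schmidt duality is an isomorphism of Hilbert spaces at each stage, so boundedness of the inverse comes for free and no decomposition $\sum_n\lambda_n\,\alpha_n\otimes\beta_n$ ever has to be chosen or controlled. Your Grothendieck-style argument buys a more concrete description of the inverse; the uniform representation step is provable (dominate the family by one semi-norm $p\otimes q$, then precompose with Hilbert--Schmidt links $X_{p'}\rightarrow X_p$ and $Y_{q'}\rightarrow Y_q$ to bound the nuclear norms uniformly by the operator norms), but as written it remains a sketch.

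The one step that is actually wrong as justified is injectivity. That pure tensors $\alpha\otimes\beta$ separate elements of $X\barotimes Y$ is a statement about points of $X\barotimes Y$; injectivity of $\Theta$ requires instead that the evaluations at $x\otimes y$ separate points of $X'\barotimes Y'$, i.e.\ that an element of the \emph{completed} tensor product of the duals pairing to zero with every $x\otimes y$ vanishes. For general Banach spaces the analogous map from $E'\barotimes F'$ to bilinear forms on $E\times F$ fails to be injective exactly when the approximation property fails, so this cannot be ``immediate''; it is again nuclearity (via the Hilbert--Schmidt colimit presentation, or the approximation property of nuclear spaces) that rescues it. The gap is load-bearing: since your $\Theta^{-1}(\phi)$ is defined by choosing a nuclear representation of $\phi$, which is highly non-unique, even the well-definedness of $\Theta^{-1}$ rests on precisely this injectivity, so it needs an actual argument rather than a one-line appeal to separation.
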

If~$X$ and~$Y$ are Hilbert spaces, then
  \begin{align*}
     X'\barotimes_{HS} Y' \cong (X \barotimes_{HS} Y)' 
  \end{align*}
  as Hilbert spaces where~$\barotimes_{HS}$ denotes the completed Hilbert-Schmidt tensor product.
  If~$X$ and~$Y$ are Banach spaces, then the bornology on the projective tensor product~$X \otimes_{\pi} Y$ agrees with the bornology on~$X \otimes Y$ from Definition~\ref{definition:BornologicalTensorProduct}, so the underlying bornological space of the completed projective tensor product of~$X$ and~$Y$ is the completed tensor product of the underlying complete bornological spaces of~$X$ and~$Y$.
  The completed tensor product of nuclear bornological spaces can be computed in terms of the completed Hilbert-Schmidt tensor product, similar to how the projective tensor product of nuclear locally convex topological vector spaces can be computed in terms of the Hilbert-Schmidt tensor product.

\begin{proof}  
  It suffices to prove that the comparison map~$c: X' \barotimes Y' \rightarrow (X \otimes Y)'$ is an isomorphism of bornological spaces, it then follows that the map~$X' \barotimes Y' \rightarrow (X \barotimes Y)'$ is an isomorphism because~$(X\barotimes Y)' \rightarrow (X \otimes Y)'$ is an isomorphism by the universal property of the completion~$X \barotimes Y$. 
  Letting~$p,q$ run over a family of semi-norms exhibiting~$X$ resp.~$Y$ as an inverse limit of Hilbert-Schmidt maps, 
  the map~$c$ factors as
  \begin{align*}
    X' \barotimes Y' &\cong \colim_p X'_p \barotimes \colim_q Y'_q\\
    &\cong \colim_{p,q} (X'_p \barotimes_{\pi} Y'_q) \\
    &\cong \colim_{p,q} (X'_p \barotimes_{HS} Y'_q) \\
    &\cong \colim_{p,q} (X_p \barotimes_{HS} Y_q)' \\
    &\cong \colim_{p,q} (X_p \otimes_{HS} Y_q)' \\
    &\cong \left(\lim_{p,q} (X_p \otimes_{HS} Y_q)\right)'
    \intertext{(since the underlying vector space of~$X_p \otimes_{HS} Y_q$ is independent of~$p$ and~$q$)}
    &\cong \left(X\otimes Y\right)'\displayperiod
  \end{align*} 
\end{proof}
Therefore, if~$U\subseteq \C^m$ and~$V\subseteq \C^n$, there is an isomorphism
\[
  \mathcal{O}'(U) \barotimes \mathcal{O}'(V) \cong (\mathcal{O}(U) \barotimes \mathcal{O}(V))' \cong \mathcal{O}(U\times V)' = \mathcal{O}'(U \times V)
\]
of complete bornological spaces.
This map sends~$\alpha \otimes \beta$ to a functional called~$\alpha \times \beta$, which on product functions~$f(z)g(w)$ of~$(z,w) \in U\times V$ is~$\alpha(f)\beta(g)$, and hence agrees with the map defined by Equation~\ref{equation:ExternalProductOfAnalyticFunctionals}.
A version of~$\mathcal{O}'(U) \barotimes \mathcal{O}'(V) \cong \mathcal{O}'(U\times V)$ in the context of locally convex spaces is stated in the Corollary to Theorem 51.6 of~\cite{Treves}.
The author has found~\cite[Chapter 4]{Helemskii} helpful for the proof of~$\mathcal{O}(U) \barotimes \mathcal{O}(V) \cong \mathcal{O}(U\times V)$, even though this book does not seem to cleanly state this isomorphism in the generality we use it. 

We now formulate the identity theorem for holomorphic functions with values in a complete bornological space and deduce it from the identity theorem for holomorphic functions with values in a Banach space. 
\begin{proposition}\label{proposition:BornologicalIdentityTheorem}
Let~$f$ be a holomorphic function on a domain~$D\subseteq \C$ with values in a complete bornological space.
If~$D$ contains an accumulation point of~$f^{-1}(0)$, then~$f$ is zero.
\end{proposition}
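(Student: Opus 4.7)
The plan is to reduce the statement to the classical identity theorem for Banach-space-valued holomorphic functions, using the local definition of holomorphicity from Section~\ref{subsection:FunctionalAnalyticPrelim}. Define
\[
  Z := \{ z \in D \mid f \text{ vanishes on some open neighborhood of } z \}
\]
and prove $Z = D$ by the standard clopen argument: $Z$ is open by construction, and I will show it is nonempty (containing the given accumulation point) and closed in $D$. Since $D$ is connected, this forces $Z = D$, hence $f \equiv 0$.

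For nonemptiness, let $z_0 \in D$ be an accumulation point of $f^{-1}(0)$. By the definition of holomorphicity, there is an open neighborhood $V$ of $z_0$ and a completely normable (Banachable) subobject $Y \subseteq X$ such that $f(V) \subseteq Y$ and $f|_V : V \to Y$ is holomorphic in the classical Banach-valued sense. Since the inclusion $Y \hookrightarrow X$ is injective, $z_0$ is still an accumulation point of the zero set of $f|_V$, so the classical identity theorem for Banach-valued holomorphic functions forces $f|_V \equiv 0$; in particular $z_0 \in Z$.

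For closedness in $D$, suppose $(z_n)$ is a sequence in $Z$ converging to some $z \in D$. In particular $f(z_n) = 0$ for all $n$. Apply the holomorphicity definition at $z$ to obtain an open neighborhood $V$ of $z$ and a Banachable subobject $Y' \subseteq X$ with $f|_V : V \to Y'$ holomorphic. For $n$ large, $z_n \in V$, so $z$ is an accumulation point of zeros of the Banach-valued function $f|_V$; the classical identity theorem again yields $f|_V \equiv 0$, hence $z \in Z$.

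The main obstacle is essentially notational rather than substantial: the argument is the textbook connectedness proof, and the only external input is the Banach-valued identity theorem, which itself follows from the fact that vanishing on a sequence accumulating at a point $p$ forces all Taylor coefficients of $f|_V$ at $p$ (which live in the Banach space $Y$) to vanish, so $f|_V$ vanishes on any disc around $p$ contained in $V$ and on which its Taylor series converges. The one point to verify carefully is that the two Banachable subobjects $Y$ and $Y'$ chosen at different points need not be comparable, but this causes no trouble since the argument is purely local at each step.
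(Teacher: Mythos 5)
Your proof is correct, and its overall skeleton is the same as the paper's: define the set of points near which $f$ vanishes identically, show it is open, nonempty, and closed, and conclude by connectedness of $D$, with the classical Banach-valued identity theorem as the only external input. The one step you handle differently is closedness. The paper writes $S = \bigcap_{n\geq 0}\{z \in D \mid f^{(n)}(z)=0\}$ and deduces closedness from the continuity of $f$ and all its derivatives with respect to the topology of b-closed sets, which implicitly uses that the derivatives are globally well-defined $X$-valued holomorphic functions and that $\{0\}$ is b-closed in a complete bornological space. You instead observe that a limit $z$ of points of $Z$ is automatically an accumulation point of $f^{-1}(0)$ (each $z_n$ is itself a zero, indeed $f$ vanishes on a neighborhood of each $z_n$), so the Banach-valued identity theorem can simply be rerun on a local Banachable model of $f$ at $z$. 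This is a bit more self-contained, as it avoids invoking the b-topology and the continuity of the derivatives altogether; the only cosmetic point is that you should shrink the neighborhood $V$ at $z$ to a disc (or a connected open set) before applying the identity theorem, so that vanishing propagates to all of $V$. Your closing remark that the Banachable subobjects chosen at different points need not be comparable, but that this is harmless because the argument is purely local, is exactly the right thing to note.
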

\begin{proof}
Let~$S$ be the set of points~$z \in D$ which have an open neighborhood~$U \subseteq D$ s.\,t.~$f|_U =0$. By definition~$S$ is open.
Locally,~$f$ admits a power series expansion in a Banachable subobject of~$X$ and thus
\[
S = \bigcap_{n \geq 0}\{ z \in D \mid f^{(n)}(z) = 0 \}\displayperiod
\]
Thus~$S$ is closed as the intersection of sets which are closed by the continuity of~$f$ and its derivatives w.\,r.\,t.\ the topology of b-closed sets.
It remains to show that~$S$ is not empty because the claim then follows since~$D$ is connected.
Let~$z_0 \in f^{-1}(0)$ be an accumulation point of~$f^{-1}(0)$.
After restricting~$f$ to an open neighborhood of~$z_0$, we may apply the identity theorem for holomorphic functions with values in a Banach space and conclude that~$z \in S$.
\end{proof}
\begin{corollary}\label{corollary:ToBornologicalIdentityTheorem}
  Let~$f$ be a holomorphic function on a domain~$D\subseteq \C$ with values in a complete bornological vector space~$X$.
  Let~$Y$ be a sub vector space of~$X$.
  If~$f^{-1}(Y)$ contains an accumulation point, then~$\im f \subseteq \overline{Y}$.
\end{corollary}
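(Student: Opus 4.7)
The plan is to reduce the corollary to Proposition~\ref{proposition:BornologicalIdentityTheorem} by passing to the quotient of $X$ by the b-closure of $Y$. Since $Y \subseteq \overline{Y}$ and $\overline{Y}$ is b-closed in the complete space $X$, the paper has already noted that $X/\overline{Y}$ is again a complete bornological space and the quotient map $q : X \to X/\overline{Y}$ is bounded and linear.

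First, I would form the composition $g := q \after f : D \to X/\overline{Y}$. Because the composition of a holomorphic map to a complete bornological space with a bounded linear map is again holomorphic, $g$ is holomorphic on $D$. Next, I would observe that $f^{-1}(Y) \subseteq g^{-1}(0)$, since elements of $Y$ map to zero under $q$. Therefore any accumulation point of $f^{-1}(Y)$ is also an accumulation point of $g^{-1}(0)$.

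Now I would apply Proposition~\ref{proposition:BornologicalIdentityTheorem} to $g$: since $g^{-1}(0)$ has an accumulation point in the domain $D$, we conclude $g \equiv 0$. This means $\im f \subseteq \ker q = \overline{Y}$, which is exactly the desired conclusion.

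There is no real obstacle here; the only thing to verify carefully is that $q \after f$ is indeed holomorphic in the sense used in Proposition~\ref{proposition:BornologicalIdentityTheorem}, but this is immediate from the local definition of holomorphicity (local factorization through a completely normable subobject) together with boundedness of $q$, which sends completely normable subobjects to completely normable subobjects up to the quotient. Everything else is a direct appeal to the already-proven identity theorem.
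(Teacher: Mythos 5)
Your proposal is correct and follows exactly the paper's own argument: compose $f$ with the quotient map $q : X \to X/\overline{Y}$, note that the target is complete because $\overline{Y}$ is b-closed, and apply Proposition~\ref{proposition:BornologicalIdentityTheorem} to $q \after f$. Nothing further is needed.
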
 
\begin{proof}
  Let~$q:X\rightarrow X/\overline{Y}$ denote the quotient map.
  Its target~$X/\overline{Y}$ is the quotient of a complete bornological vector space by a b-closed subspace and hence complete. 
  Proposition~\ref{proposition:BornologicalIdentityTheorem} applies to~$q\after f$, so~$q\after f = 0$ and thus~$\im f \subseteq \overline{Y}$.
\end{proof} 

\Subsection{Multiplicativity}
The proof that~$E$ is multiplicative is a version of the proof that the symmetric algebra functor turns direct sums into tensor products.
\begin{proposition}\label{proposition:EIsMultiplicative}
  The prefactorization algebra~$E$ is multiplicative.
\end{proposition}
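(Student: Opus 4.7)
The plan is to mimic the algebraic proof that the symmetric algebra functor sends direct sums to tensor products, but worked out in the bornological setting and enriched by the external product of analytic functionals. The first step is a decomposition of the configuration spaces: since $U$ and $V$ are disjoint, no diagonal in $(U\sqcup V)^n$ can mix a $U$-coordinate with a $V$-coordinate, so
\[
(U\sqcup V)^n \setminus \Delta \;\cong\; \bigsqcup_{\substack{m+k=n \\ S\subseteq \{1,\dots,n\},\, |S|=m}} \bigl(U^m\setminus\Delta\bigr)\times\bigl(V^k\setminus\Delta\bigr)\displaycomma
\]
where the piece indexed by $S$ places the $U$-coordinates in the positions of $S$. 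Analytic functionals turn disjoint unions into direct sums, so after dualizing and using Proposition~\ref{proposition:DualOfTensorProductOfNuclearSpaces} to identify $\mathcal{O}'(X\times Y) \cong \mathcal{O}'(X)\barotimes\mathcal{O}'(V)$, one obtains
\[
\mathcal{O}'((U\sqcup V)^n\setminus \Delta) \;\cong\; \bigoplus_{m+k=n}\bigoplus_{S} \mathcal{O}'(U^m\setminus \Delta)\barotimes \mathcal{O}'(V^k\setminus\Delta)\displayperiod
\]

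Next I would tensor with $\V^{\otimes n}$ and take $\Sigma_n$-coinvariants. The $\Sigma_n$-action permutes the index set $\{S\subseteq \{1,\dots,n\}:|S|=m\}$ transitively with stabilizer $\Sigma_m\times\Sigma_k$, so the coinvariants reduce each summand to a single copy:
\begin{align*}
&\bigl(\mathcal{O}'((U\sqcup V)^n\setminus\Delta)\otimes \V^{\otimes n}\bigr)_{\Sigma_n}\\
&\qquad\cong \bigoplus_{m+k=n} \bigl(\mathcal{O}'(U^m\setminus\Delta)\barotimes\mathcal{O}'(V^k\setminus\Delta)\otimes \V^{\otimes m}\otimes \V^{\otimes k}\bigr)_{\Sigma_m\times\Sigma_k}\displayperiod
\end{align*}
Summing over $n\ge 0$ and regrouping the double sum by $(m,k)$ gives exactly
\[
\bigoplus_{m\ge 0}\bigl(\mathcal{O}'(U^m\setminus \Delta)\otimes \V^{\otimes m}\bigr)_{\Sigma_m}\;\barotimes\;\bigoplus_{k\ge 0}\bigl(\mathcal{O}'(V^k\setminus\Delta)\otimes \V^{\otimes k}\bigr)_{\Sigma_k}\;=\;E(U)\barotimes E(V)\displaycomma
\]
where one uses that $\barotimes$ commutes with direct sums in each variable and with finite-group coinvariants (the latter because the quotient of a complete space by a finite group action may be identified with invariants). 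The composite isomorphism is, by construction, the multiplication map $M^{U,V}_{U\sqcup V}$: it takes $[\alpha\otimes a]\otimes[\beta\otimes b]$ to the class of $(\alpha\times\beta)\otimes a\otimes b$ pushed forward along the inclusion of the piece indexed by $S=\{1,\dots,m\}$, which is precisely the definition.

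For the unit, $\emptyset^0\setminus\Delta$ is a one-point space, so $\mathcal{O}'(\emptyset^0\setminus\Delta)\cong\C$, while $\emptyset^m\setminus\Delta=\emptyset$ for $m\ge 1$ gives no contribution; hence $E(\emptyset)\cong\C$ and the unit map is an isomorphism. The main obstacle I anticipate is bookkeeping at the level of bornologies: one must check that the algebraic decomposition of $(U\sqcup V)^n\setminus\Delta$ really induces a direct sum on $\mathcal{O}'$ as complete bornological spaces (cleanly done via the product decomposition $\mathcal{O}(X_1\sqcup X_2)\cong\mathcal{O}(X_1)\times\mathcal{O}(X_2)$ of locally convex spaces), and that passing to $\Sigma_n$-coinvariants commutes with the infinite direct sum and with $\barotimes$. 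Both are formal once one uses that $\barotimes$ preserves colimits in each variable and that coinvariants by a finite group are computed using the averaging projector, which is bounded.
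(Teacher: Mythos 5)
Your proposal is correct and follows essentially the same route as the paper: decompose $(U\sqcup V)^n\setminus\Delta$ into pieces indexed by subsets using disjointness of $U$ and $V$, apply $\mathcal{O}'(X\times Y)\cong\mathcal{O}'(X)\barotimes\mathcal{O}'(Y)$, and reduce $\Sigma_n$-coinvariants of the induced sum to $\Sigma_m\times\Sigma_k$-coinvariants, with $\barotimes$ commuting with direct sums handling the bornological bookkeeping. The only difference is cosmetic (you start from $E(U\sqcup V)$ and decompose, while the paper builds up from $E(U)\barotimes E(V)$), plus a harmless typo where $\mathcal{O}'(V)$ should read $\mathcal{O}'(Y)$.
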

\begin{proof}
  Let~$U,V \subseteq \C$ be open and disjoint. 
  The multiplication map
  \[
    M^{U,V}_{U \sqcup V} : E(U) \barotimes E(V) \rightarrow E(U \sqcup V)
  \] 
  factors as
  \begin{align*}
    &E(U) \barotimes E(V) \\
    &= \left( \bigoplus_{m \geq 0} \left(\mathcal{O}'(U^m \setminus \Delta_m) \otimes \V^{\otimes m}\right)_{\Sigma_m} \right) \barotimes \left( \bigoplus_{n \geq 0} \left(\mathcal{O}'(V^n \setminus \Delta_n) \otimes \V^{\otimes n}\right)_{\Sigma_n} \right) \\
    &\cong \bigoplus_{m \geq 0} \bigoplus_{0 \leq i \leq m} \left(\mathcal{O}'(U^i \setminus \Delta_i) \otimes \V^{\otimes i}\right)_{\Sigma_i} \barotimes \left(\mathcal{O}'(V^{m-i} \setminus \Delta_{m-i}) \otimes \V^{\otimes (m-i)}\right)_{\Sigma_{m-i}} \\
&\cong \bigoplus_{m \geq 0} \left(\mathcal{O}'((U\sqcup V)^m \setminus \Delta) \otimes \V^{\otimes m}\right)_{\Sigma_m}\displaycomma
  \end{align*}
  where the first isomorphism holds because the completed tensor product commutes with direct sums separately in each variable. To deduce that the second map is an isomorphism, we fix~$m\geq 0$ and consider the map between the~$m$-th summands. 
  It factors as 
  \begin{align*}
    &\bigoplus_{0 \leq i \leq m} \left(\mathcal{O}'(U^i \setminus \Delta_i) \otimes \V^{\otimes i}\right)_{\Sigma_i} \barotimes \left(\mathcal{O}'(V^{m-i} \setminus \Delta_{m-i}) \otimes \V^{\otimes (m-i)}\right)_{\Sigma_{m-i}} \\
    &\cong\bigoplus_{0 \leq i \leq m} \left(\mathcal{O}'(U^i \setminus \Delta_i) \otimes \V^{\otimes i} \barotimes \mathcal{O}'(V^{m-i} \setminus \Delta_{m-i}) \otimes \V^{\otimes (m-i)}\right)_{\Sigma_i \times \Sigma_{m-i}} \\
    &\cong\bigoplus_{0 \leq i \leq m} \left(\mathcal{O}'(U^i \setminus \Delta_i)  \barotimes \mathcal{O}'(V^{m-i} \setminus \Delta_{m-i}) \otimes \V^{\otimes i} \otimes \V^{\otimes (m-i)}\right)_{\Sigma_i \times \Sigma_{m-i}} \\
    &\cong\bigoplus_{0 \leq i \leq m} \left(\mathcal{O}'((U^i \setminus \Delta_i)  \times (V^{m-i} \setminus \Delta_{m-i})) \otimes \V^{\otimes i} \otimes \V^{\otimes (m-i)}\right)_{\Sigma_i \times \Sigma_{m-i}} \\
    &\cong \left(\mathcal{O}'(U^m \setminus \Delta) \otimes \V^{\otimes m}\right)_{\Sigma_m}\displaycomma
  \end{align*}
  where the last isomorphism requires further justification.
Let~$\mathbf{m} = \{1,\ldots, m\}$.
For finite sets~$I$, in particular~$I\subseteq \mathbf{m}$, we have the finite product~$\C^I$ and its subset
\[
  \Delta_I := \{ z \in \C^I \mid z_i = z_j \text{ for some } i,j \in I \text{ with } i \neq j \}\displayperiod
\]
There are~$\Sigma_m$-equivariant complex-analytic isomorphisms
\begin{align*}
(U \sqcup V)^m \setminus \Delta_m &= (U \sqcup V)^\mathbf{m} \setminus \Delta_\mathbf{m}\\
&\cong \bigsqcup_{I \subseteq \mathbf{m}} (U^I \times V^{\mathbf{m} \setminus I }) \setminus \Delta_I\\
& \cong \bigsqcup_{I \subseteq \mathbf{m}} (U^I \setminus \Delta_I) \times (V^{\mathbf{m} \setminus I } \setminus \Delta_{\mathbf{m} \setminus I }) & \text{(}U \cap V = \emptyset\text{)}
\end{align*}
given by the inclusions and associativity isomorphism.
Combining the induced isomorphism on~$\mathcal{O}'$ with the isomorphism~$\mathcal{O}'(X) \barotimes \mathcal{O}'(Y) \cong \mathcal{O}'(X \times Y)$ for~$X,Y$ open subsets of finite-dimensional complex vector spaces, we get
\begin{align*}
  \bigoplus_{I \subseteq \mathbf{m}} \mathcal{O}'(U^I \setminus \Delta_I) \barotimes \mathcal{O}'(V^{\mathbf{m} \setminus I } \setminus \Delta_{\mathbf{m} \setminus I })&\cong \bigoplus_{I \subseteq \mathbf{m}} \mathcal{O}'((U^I \setminus \Delta_I) \times (V^{\mathbf{m} \setminus I } \setminus \Delta_{\mathbf{m} \setminus I })) \\
  &\cong \mathcal{O}'((U \sqcup V)^m \setminus \Delta_m)\displaycomma
\end{align*}
again~$\Sigma_m$-equivariantly.
Tensoring with the~$\Sigma_m$-representation~$\V^{\mathbf{m}}$ gives a~$\Sigma_m$-isomorphism
\begin{align*}
  \bigoplus_{I \subseteq \mathbf{m}} \left(\mathcal{O}'(U^I \setminus \Delta_I) \otimes \V^I\right) \barotimes \left(\mathcal{O}'(V^{\mathbf{m} \setminus I } \setminus \Delta_{\mathbf{m} \setminus I }) \otimes \V^{\mathbf{m} \setminus \mathbf{I}}\right)\\
  \cong \mathcal{O}'((U \sqcup V)^m \setminus \Delta_m) \otimes \V^m\displayperiod  
\end{align*}
This yields the second isomorphism in  
\begin{align*}
&  \bigoplus_{0 \leq i \leq m} \left(\mathcal{O}'(U^i \setminus \Delta_i) \otimes \V^{\otimes i}\right)_{\Sigma_i} \barotimes \left(\mathcal{O}'(V^{m-i} \setminus \Delta_{m-i}) \otimes \V^{\otimes (m-i)}\right)_{\Sigma_{m-i}}\\
  &\cong\bigoplus_{0 \leq i \leq m} \left(\left(\mathcal{O}'(U^i \setminus \Delta_i) \otimes \V^i\right) \barotimes \left(\mathcal{O}'(V^{m - i} \setminus \Delta_{\mathbf{m} -i }) \otimes \V^{m-i}\right)\right)_{\Sigma_i \times \Sigma_{m-i}}\\
  &\cong \left(\mathcal{O}'((U \sqcup V)^m \setminus \Delta_m) \otimes \V^m \right)_{\Sigma_m} \displaycomma
\end{align*}
and the composite is the map on the~$m$-th summand. 
\end{proof}
\begin{proposition}\label{proposition:FIsMultiplicative}
  The prefactorization algebra~$F$ is multiplicative.
\end{proposition}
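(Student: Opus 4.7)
The plan is to leverage the multiplicativity of $E$ (Proposition~\ref{proposition:EIsMultiplicative}) and the presentation $F(U) = \coker(S^{disc}(U) \to E(U))$ from Proposition~\ref{proposition:FAsCokernelOfEByS}, plus the elementary but decisive fact that open discs are connected.

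First, dispose of the unit. In $E(\emptyset) = \bigoplus_{m\geq 0}(\mathcal{O}'(\emptyset^m\setminus\Delta)\otimes \V^{\otimes m})_{\Sigma_m}$, only the $m=0$ summand is non-zero, and it is naturally identified with $\C$ via the unit map. Since there are no discs in $\emptyset$, we have $R^{disc}(\emptyset)=0$, so $F(\emptyset)\cong \C$, and the unit is an isomorphism.

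For multiplicativity on disjoint opens $U, V \subseteq \C$, let $W=U\sqcup V$. Because the completed tensor product preserves cokernels in each variable, Proposition~\ref{proposition:FAsCokernelOfEByS} implies that $E(U)\barotimes E(V)\to F(U)\barotimes F(V)$ is a cokernel of
\[
T := S^{disc}(U)\barotimes E(V)\ \oplus\ E(U)\barotimes S^{disc}(V)\longrightarrow E(U)\barotimes E(V).
\]
Via the isomorphism $E(U)\barotimes E(V)\cong E(W)$ of Proposition~\ref{proposition:EIsMultiplicative}, the image of $T$ and the image of $S^{disc}(W)\to E(W)$ become two sub vector spaces of $E(W)$; I will show that they have the same b-closure, which implies the two cokernels are canonically isomorphic, hence that $M^{U,V}_W: F(U)\barotimes F(V)\to F(W)$ is an isomorphism.

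One inclusion was already established in the proof of Proposition~\ref{proposition:FIsAPrefactorizationAlgebra}: using associativity (and braiding for the second summand), each summand $R(d)\barotimes E(W')$ of $T$ with $d\subseteq U$ (resp.\ $d\subseteq V$) factors through a summand $R(d)\barotimes E(W'\sqcup V)$ (resp.\ $R(d)\barotimes E(W'\sqcup U)$) of $S^{disc}(W)$. For the converse, take a summand $R(d)\barotimes E(W')$ of $S^{disc}(W)$, where $d\subseteq W$ is a (round, open) disc and $W'\subseteq W$ is open and disjoint from $d$. Since discs are connected and $U,V$ are disjoint open, either $d\subseteq U$ or $d\subseteq V$; say $d\subseteq U$. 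Setting $W'_U = W'\cap U$ and $W'_V = W'\cap V$, the multiplicativity of $E$ gives an isomorphism $E(W')\cong E(W'_U)\barotimes E(W'_V)$, so that $R(d)\barotimes E(W')$ factors as
\[
R(d)\barotimes E(W'_U)\barotimes E(W'_V)\longrightarrow S^{disc}(U)\barotimes E(V) \longrightarrow E(U)\barotimes E(V),
\]
where the first map uses the direct summand inclusion $R(d)\barotimes E(W'_U)\hookrightarrow S^{disc}(U)$ and the inclusion $E(W'_V)\hookrightarrow E(V)$. Thus the image of $S^{disc}(W)\to E(W)$ is contained in the image of $T$, giving the other inclusion. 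The main obstacle is therefore just the bookkeeping of identifying these images through the multiplicativity isomorphism of $E$; connectivity of discs makes the argument go through cleanly.
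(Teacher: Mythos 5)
Your proposal is correct and follows essentially the same route as the paper: both reduce to comparing the image of $T = S^{disc}(U)\barotimes E(V)\oplus E(U)\barotimes S^{disc}(V)$ with the image of $S^{disc}(U\sqcup V)$ inside $E(U\sqcup V)$ via the multiplicativity of $E$, using that a disc is connected and hence lies entirely in $U$ or in $V$, and splitting $E(W')\cong E(W'\cap U)\barotimes E(W'\cap V)$. The only cosmetic differences are that you argue by mutual containment of images where the paper writes down the explicit isomorphism of $S^{disc}(U\sqcup V)$ with a sum built from $S^{disc}(U)$ and $S^{disc}(V)$, and that you additionally spell out the (easy) unit case $F(\emptyset)\cong\C$.
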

\begin{proof}
  Let~$U, V \subseteq \C$ be open and disjoint.
  Recall from the proof of Proposition~\ref{proposition:FIsAPrefactorizationAlgebra} that the map~$E(U) \barotimes E(V) \rightarrow F(U) \barotimes F(V)$ is the cokernel of the multiplication map
  \[
    T = S^{disc}(U) \barotimes E(V) \oplus E(U) \barotimes S^{disc}(V) \rightarrow E(U) \barotimes E(V)\displayperiod
  \]
  We already know from Proposition~\ref{proposition:EIsMultiplicative} that~$E(U) \barotimes E(V) \cong E(U\sqcup V)$ via the same map that induces the multiplication map~$F(U) \barotimes F(V) \rightarrow F(U \sqcup V)$ given by Proposition~\ref{proposition:FIsAPrefactorizationAlgebra}.
  Therefore, it remains to see that the cokernel of~$E(U\sqcup V)$ by~$S^{disc}(U\sqcup V)$ agrees with the cokernel of
  \[
    T \rightarrow E(U) \barotimes E(V) \rightarrow E(U \sqcup V)\displayperiod
  \]  
  For this, we use that there is an isomorphism
  \begin{align}
    S^{disc}(U\sqcup V ) \cong S^{disc}(U) \barotimes \left(\bigoplus_{W' \subseteq V} E(W')\right) \oplus \left(\bigoplus_{W' \subseteq U} E(W')\right) \barotimes S^{disc}(V)\label{equation:SdiscUVAlternative}
  \end{align}
  compatible with the multiplication maps to~$E(U \sqcup V)$. Using~(\ref{equation:SdiscUVAlternative}), we see that~$S^{disc}(U \sqcup V)$ has maps to and from~$T$ compatible with the maps to~$E(U \sqcup V)$. 
  Hence the images of~$S^{disc}(U \sqcup V)$ and~$T$ in~$E(U\sqcup V)$ agree.
  We obtain the isomorphism in~(\ref{equation:SdiscUVAlternative}) from the fact that any disc~$d \subseteq U \sqcup V$ is contained in exactly one of~$U$ and~$V$ and from the multiplicativity of~$E$,
  {\allowdisplaybreaks\begin{align*}
    S^{disc}(U\sqcup V ) &\cong \bigoplus_{\stackrel{d \subseteq U}{W \subseteq (U\sqcup V) \setminus d}} E(W) \barotimes R(d)  \oplus \bigoplus_{\stackrel{d \subseteq V}{W \subseteq (U\sqcup V) \setminus d}} E(W) \barotimes R(d) \\
    &\cong \bigoplus_{\stackrel{d \subseteq U}{W \subseteq (U\sqcup V) \setminus d}} E(W\cap V) \barotimes E(W\cap U) \barotimes R(d) \\
    &\quad\quad\oplus \bigoplus_{\stackrel{d \subseteq V}{W \subseteq (U\sqcup V) \setminus d}} E(W\cap U) \barotimes E(W \cap V) \barotimes R(d) \\
    &\cong \bigoplus_{\stackrel{d \subseteq U}{W \subseteq U\setminus d, W' \subseteq V}} E(W') \barotimes E(W) \barotimes R(d)\\
    &\quad\quad  \oplus \bigoplus_{\stackrel{d \subseteq V}{W \subseteq V \setminus d, W' \subseteq U}} E(W') \barotimes E(W) \barotimes R(d) \\
    &\cong \left(\bigoplus_{\stackrel{d \subseteq U}{W \subseteq U\setminus d}}  E(W) \barotimes R(d) \right) \barotimes  \left(\bigoplus_{W' \subseteq V} E(W')\right)\\
    &\quad\quad\oplus\left(\bigoplus_{W' \subseteq U}  E(W')\right) \otimes \left( \bigoplus_{\stackrel{d \subseteq V}{W \subseteq V \setminus d }} E(W) \barotimes R(d)\right) \\
    &\cong S^{disc}(U) \barotimes \left(\bigoplus_{W' \subseteq V} E(W')\right) \oplus \left(\bigoplus_{W' \subseteq U} E(W')\right) \barotimes S^{disc}(V)\displayperiod
\end{align*}}
\end{proof}

\Subsection{Weiss Codescent}
We prove that~$E$ and~$F$ are Weiss cosheaves in this section and state a more detailed version of the theorem from the introduction.  
\begin{proposition}\label{proposition:EIsAWeissCosheaf}
The precosheaf~$E$ is a Weiss cosheaf of complete bornological vector spaces.
\end{proposition}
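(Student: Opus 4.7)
The plan is to reduce Weiss codescent for $E$ to Weiss codescent for each precosheaf $P_m: U\mapsto \mathcal{O}'(U^m\setminus\Delta)$ on $\C$, and then to invoke Proposition~\ref{proposition:AnalyticFunctionalsFormACosheafForSteinCoversOfSteinSubsetsOfCn}.

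First I will decompose by arity: write $E(U)=\bigoplus_{m\geq 0} E_m(U)$ with $E_m(U)=(\mathcal{O}'(U^m\setminus\Delta)\otimes\V^{\otimes m})_{\Sigma_m}$. Since direct sums in $\CBVS$ commute with colimits, it suffices to show each $E_m$ is a Weiss cosheaf. For fixed $m$, tensoring with the discrete space $\V^{\otimes m}$ preserves colimits (on complete spaces this functor agrees with the completed tensor product, which commutes with colimits in each variable), and $\Sigma_m$-coinvariants are themselves a colimit. Hence $E_m$ is a Weiss cosheaf as soon as $P_m$ is.

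Next I will verify Weiss codescent for $P_m$ by transporting a Weiss cover of $Y\subseteq\C$ to a Stein open cover of $Y^m\setminus\Delta\subseteq\C^m$. Given a Weiss cover $\mathcal{U}$ of $Y$, replace it by its closure under pairwise intersections, which preserves both the Weiss property and the relevant colimit. Every open subset of $\C$ is Stein, products of Stein manifolds are Stein, and the complement of a closed analytic hypersurface in a Stein manifold is again Stein, so each $U^m\setminus\Delta$ and $Y^m\setminus\Delta$ itself is Stein. The family $\mathcal{V}=\{U^m\setminus\Delta:U\in\mathcal{U}\}$ is then a Stein open cover of $Y^m\setminus\Delta$ closed under pairwise intersections: given $(z_1,\ldots,z_m)\in Y^m\setminus\Delta$, the set $\{z_1,\ldots,z_m\}$ has size exactly $m$ and by the Weiss condition lies in some $U\in\mathcal{U}$, so $(z_1,\ldots,z_m)\in U^m\setminus\Delta$; moreover $(U\cap U')^m\setminus\Delta=(U^m\setminus\Delta)\cap((U')^m\setminus\Delta)$.

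Applying Proposition~\ref{proposition:AnalyticFunctionalsFormACosheafForSteinCoversOfSteinSubsetsOfCn} to the cover $\mathcal{V}$ of $Y^m\setminus\Delta$ then yields $\colim_{U\in\mathcal{U}}\mathcal{O}'(U^m\setminus\Delta)\cong\mathcal{O}'(Y^m\setminus\Delta)$, i.e., Weiss codescent for $P_m$, and hence for $E$. The main non-formal input, and thus the main obstacle, is the Stein-ness of the sets $U^m\setminus\Delta$ and $Y^m\setminus\Delta$; this rests on the classical theorem that removing a closed analytic hypersurface from a Stein manifold produces a Stein manifold. Everything else is formal manipulation of colimits through the direct-sum, tensor-with-discrete, and coinvariants functors.
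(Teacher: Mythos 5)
Your proposal is correct and follows essentially the same route as the paper: decompose $E$ by arity, pass the colimit through direct sums, tensoring with the discrete space $\V^{\otimes m}$, and $\Sigma_m$-coinvariants, and then apply Proposition~\ref{proposition:AnalyticFunctionalsFormACosheafForSteinCoversOfSteinSubsetsOfCn} to the Stein cover $\{U^m\setminus\Delta\}_{U\in\mathcal{U}}$ of $X^m\setminus\Delta$ arising from the Weiss cover. You supply slightly more detail than the paper on why $\{U^m\setminus\Delta\}$ covers and why these sets are Stein, but the argument is the same.
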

\begin{proof}
Let~$X$ be an open subset of~$\C$ and~$\mathcal{U}$ be a Weiss cover of~$X$.
There is a commutative diagram
\begin{ctikzcd}
\colim\limits_{U \in \mathcal{U}} E(U) \rar{} &  E(X) \\
\bigoplus\limits_{n\geq 0} \left(\colim\limits_{U \in \mathcal{U}} \left(\mathcal{O}'(U^n \setminus \Delta)\right) \otimes V^{\otimes n}\right)_{\Sigma_n} \rar{} \ar[u, "\cong"]&  \bigoplus\limits_{n\geq 0} \left(\mathcal{O}'(X^n \setminus \Delta) \otimes V^{\otimes n}\right)_{\Sigma_n} \ar[u,equal]&
\end{ctikzcd}
in~$\BVS$. 
The left hand map is an isomorphism because colimits commute with each other and with the completed tensor product.
The bottom map is an isomorphism because~$\mathcal{O}'$ is a cosheaf for Stein open covers of Stein subsets of~$\C^n$ for each~$n\geq 0$ by Proposition~\ref{proposition:AnalyticFunctionalsFormACosheafForSteinCoversOfSteinSubsetsOfCn} and~$U^n \setminus\Delta$ is Stein for all open subsets~$U \subseteq \C$.
Since~$\mathcal{U}$ is a Weiss cover,~$\{U^n\}_{U\in \mathcal{U}}$ is an open cover of~$X^n$ for each~$n\geq 0$.
The cosheaf property applied to the open cover~$\{U^n\setminus \Delta\}_{U\in \mathcal{U}}$ of~$X^n \setminus \Delta$ implies that the bottom map is an isomorphism.
\end{proof}
The following lemma says that the set of relations on a disc is the closure of relations on every smaller concentric disc. 
\begin{lemma}\label{lemma:RelationsOnSmallerConcentricDiscAreDenseInRelationsOnDisc}
  Let~$d$ be a disc of radius~$r$ with center~$z_0 \in \C$ and let~$\delta > 0$ with~$\delta \leq r$. 
  Then
  \[
    R(d) = \overline{i\left(R(B_\delta(z_0))\right)} \subseteq E(d)\displaycomma
  \]
  where~$i = E(B_\delta(z_0) \hookrightarrow d)$ and the b-closure is taken in~$E(d)$.
\end{lemma}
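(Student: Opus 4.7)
The plan is to prove the two inclusions separately. The inclusion $\overline{i(R(B_\delta(z_0)))} \subseteq R(d)$ is immediate: evaluation maps commute with the extension maps of $E$, so $i$ sends $R(B_\delta(z_0))$ into $R(d)$, and $R(d) = \ker \ev_d$ is b-closed by Remark~\ref{remark:ClosednessOfRelations}.

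For the reverse inclusion my plan is to apply the bornological identity theorem to a one-parameter family of rescalings of $v$. Fix $v \in R(d)$, set $K := i(R(B_\delta(z_0)))$, and write $g_\lambda(z) = \lambda(z - z_0) + z_0$ for $\lambda \in \C^\times$, so that $g_\lambda(d) = B_{|\lambda| r}(z_0)$. For $0 < |\lambda| < 1$, $g_\lambda$ lies in the interior of $\mathcal{D}_{d,d}$, so by the holomorphicity of $E$ (Proposition~\ref{proposition:EandFareHolomorphic}) the map
\[
  h: \D^\times \longrightarrow E(d), \qquad h(\lambda) := M^{g_\lambda d}_d \after \sigma^d_{g_\lambda}(v),
\]
is holomorphic on the connected domain $\D^\times$. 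For $0 < |\lambda| \leq \delta/r$ we have $g_\lambda(d) \subseteq B_\delta(z_0)$, and the equivariance of the evaluation map (Proposition~\ref{proposition:EvaluationIsEquivariant}) forces $\sigma^d_{g_\lambda}(v) \in R(B_\delta(z_0))$, so $h(\lambda) \in K$ on this non-empty open subset of $\D^\times$. Corollary~\ref{corollary:ToBornologicalIdentityTheorem} then gives $h(\D^\times) \subseteq \overline{K}$.

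The remaining step is to show $h(\lambda) \to v$ in the b-topology of $E(d)$ as $\lambda \to 1$ along $(0,1)$; the b-closedness of $\overline{K}$ will then yield $v \in \overline{K}$. Writing $v$ as a finite sum of elementary tensors $[\alpha \otimes a]$ with $a \in \V^{\otimes m}$ homogeneous, one has $h(\lambda) = \lambda^{|a|}[(g_\lambda^m)_\ast \alpha \otimes a]$; choosing a compact $K'' \subset d^m \setminus \Delta$ with $\alpha \in \mathcal{O}'_{K''}(d^m \setminus \Delta)$ and a slightly larger compact $K' \supseteq \bigcup_{|\lambda|\leq 1} g_\lambda^m(K'')$, a Cauchy-estimate argument on $f \in \mathcal{O}(d^m \setminus \Delta)$ with $\|f\|_{\infty, K'} \leq 1$ shows that $(g_\lambda^m)_\ast \alpha \to \alpha$ in the Banach space $\mathcal{O}'_{K'}(d^m \setminus \Delta)$ as $\lambda \to 1$. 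This boundary continuity is the main obstacle of the proof: the point $\lambda = 1$ lies on the boundary of the interior of $\mathcal{D}_{d,d}$, so it is not covered by the interior holomorphicity of the action used above and must be verified by the explicit uniform estimate just described.
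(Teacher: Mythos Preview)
Your argument is correct in outline, and the scaling/identity-theorem idea is exactly the one the paper uses. The difference is in how you get the value at $\lambda = 1$.

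You work directly with $v \in R(d)$ and obtain a holomorphic map $h$ on $\D^\times$; since $1 \in \partial\D^\times$, you must supplement the identity theorem with a separate boundary-continuity step showing $h(\lambda) \to v$ as $\lambda \to 1^-$. That step is doable, but your sketch is slightly off: the set $\bigcup_{|\lambda| \leq 1} g_\lambda^m(K'')$ is not contained in $d^m \setminus \Delta$ for $m \geq 2$ (at $\lambda = 0$ it collapses to the diagonal point $(z_0,\ldots,z_0)$), so you should restrict to $\lambda$ in a small real interval $[1-\varepsilon,1]$ before choosing $K'$. With that fix, the Cauchy-estimate argument goes through.

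The paper avoids this boundary issue altogether by first invoking that $E$ is a Weiss cosheaf (Proposition~\ref{proposition:EIsAWeissCosheaf}): the strictly smaller concentric discs Weiss-cover $d$, so $x \in R(d)$ lifts to some $x' \in R(d')$ with $d' \subsetneq d$ of radius $r' < r$. One then scales $x'$ rather than $x$: the map $q \mapsto \rho_{d',d}(q)(x')$ is holomorphic on the annulus $B_{r/r'}(0)\setminus\{0\}$, which contains $1$ in its \emph{interior} because $r/r' > 1$. Hence $x = f(1)$ already lies in the image of $f$, and Corollary~\ref{corollary:ToBornologicalIdentityTheorem} yields $x \in \overline{K}$ with no limiting argument needed. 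Your route is more self-contained (it does not use the Weiss cosheaf property), while the paper's route is cleaner and sidesteps the explicit functional-analytic estimate.
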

Roughly speaking, the proof of the lemma involves viewing an element of~$R(d)$ as the endpoint of a path~$[\varepsilon,1] \rightarrow R(d)$ which extends to a holomorphic function taking values in~$i(R(B_\delta(z_0)))$ near its beginning. 
The path is given by scaling down a given relation, and this preserves the property of being a relation, i.\,e., evaluating to zero, because the evaluation map is equivariant. 
It then follows that the holomorphic function has image in the b-closure of the image of~$R(B_\delta(z_0))$ by Corollary~\ref{corollary:ToBornologicalIdentityTheorem} to the identity theorem for holomorphic functions. 
\begin{proof}[Proof of Lemma~\ref{lemma:RelationsOnSmallerConcentricDiscAreDenseInRelationsOnDisc}]
  It suffices to consider the case of~$z_0 = 0$ by the translation invariance of~$E$.
  The inclusion~``$\supseteq$'' holds because~$R(d)$ is b-closed in~$E(d)$ by Remark~\ref{remark:ClosednessOfRelations}.
  To prove~``$\subseteq$'', let~$x\in R(d)$.
  There exists a strictly smaller concentric disc~$d' \subset d$ and an~$x' \in E(d')$ which is mapped to~$x$ by the map induced by the inclusion of~$d'$ into~$d$ because~$E$ is a Weiss cosheaf and the set of concentric discs strictly smaller than~$d$ forms a Weiss cover of~$d$. 
  We have~$x' \in R(d')$ because the evaluation map is compatible with inclusions. 
  Let~$r >r' > 0$ denote the radius of~$d$ resp.~$d'$.
  We can restrict attention to the case that~$d'$ strictly contains~$B_\delta(0)$, that is, that~$r' > \delta$, because otherwise~$d' \subseteq B_\delta(0)$ and thus~$x \in i(R(B_\delta(0)))$. 
  Let~$R = \frac{r}{r'}$ and~$A = B_R(0) - 0 \subseteq \C^\times$.
  From now on, we only consider rotations and dilations among the affine-linear isomorphisms of~$\C$, and we identify~$A$ with a subset of the interior of~$\mathcal{D}_{d',d}$ via the group homomorphism~$\C^\times \hookrightarrow \C^\times \ltimes \C$.
  The map 
  \labelledMapMapsto{f}{A}{E(d)}{q}{\rho_{d',d}(q)(x')}
  is holomorphic because~$E$ is holomorphic. 
  Note that~$1\in A$ since~$R>1$ because~$d'$ is strictly smaller than~$d$.
  Also note that~$f(1)$ is the image of~$x'$ under the inclusion of~$d'$ into~$d$ and hence~$f(1) = x$.
  Let~$N$ be the set of~$q \in A$ with~$|q| < \frac{\delta}{r'}$.
  If~$q \in N$, then~$q.d' \subseteq B_\delta(0)$, so that~$q \in \mathcal{D}_{d',B_\delta(0)}$ and
  \[
    \rho_{d',d}(q) = \rho_{B_\delta(0), d}(1) \after \rho_{d', B_\delta(0)}(q) = i \after \rho_{d', B_\delta(0)}(q)\displayperiod
  \] 
  The evaluation map is equivariant by Proposition~\ref{proposition:EvaluationIsEquivariant}, so
  \[
    f(q) = \rho_{d',d}(q)(x') \in i(R(B_\delta(0)))
  \]
  for~$q\in N$. 
  Since~$N\subseteq A$ is non-empty and open, it has an accumulation point inside~$A$.
  Corollary~\ref{corollary:ToBornologicalIdentityTheorem} applies to the holomorphic function~$f$ defined on the domain~$A$, the complete bornological vector space~$E(d)$, and its subspace~$i(R(B_\delta(0)))$ to which~$N$ maps. 
  Therefore, the image of~$f$ is contained in~$\overline{i(R(B_\delta(0)))}$.
  In particular,~$x=f(1)$ is an element of this set.
\end{proof}
\begin{definition}
  Let~$X$ be an open subset of~$\C$ and let~$\mathcal{U}$ be a Weiss cover of~$X$.
  The \emph{relation on discs subordinate to~$\mathcal{U}$} are defined as
  \[
    R^{disc}_\mathcal{U}(X) = \im \left( \bigoplus_{d,W} R(d) \otimes E(W)  \longrightarrow E(X) \right)
  \]
  where the direct sum is taken over pairs of a disc~$d\subseteq X$ and~$W\subseteq X$ open and disjoint from~$d$ such that there exists a~$U \in \mathcal{U}$ s.\,t.~$d$ and~$W$ are both subsets of~$U$.  
\end{definition}
The last condition in the definition of~$R^{disc}_\mathcal{U}(X)$, namely that~$d$ and $W$ are contained in a single~$U \in \mathcal{U}$, is the only difference from the definition of~$R^{disc}(X)$. 
\begin{lemma}\label{lemma:RelationsSubordinateToCover}
  If~$X$ is an open subset of~$\C$ and~$\mathcal{U}$ a Weiss cover of~$X$, then
  \[
    R^{disc}(X) \subseteq \overline{R^{disc}_\mathcal{U}(X)} \displaycomma
  \]
  where the b-closure is taken in~$E(X)$. 
\end{lemma}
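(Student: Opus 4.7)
The plan is to reduce to generators $x = M^{d,W}_X(r \otimes e)$ with $r \in R(d)$ and $e \in E(W)$, and then arrange a two-layer b-approximation: approximate $e$ by finite sums concentrated in $U \in \mathcal{U}$ containing the center $z_0$ of $d$, and approximate $r$ by relations on a small concentric subdisc of $d$ that fits into the same $U$. The crucial observation is that $\mathcal{V} := \{W \cap U \mid U \in \mathcal{U},\ z_0 \in U\}$ is itself a Weiss cover of $W$: for any finite $F \subseteq W$ the finite set $F \cup \{z_0\} \subseteq X$ is contained in some $U \in \mathcal{U}$ by hypothesis, and the corresponding $W \cap U$ lies in $\mathcal{V}$ and contains $F$.

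Given this, the first step uses the Weiss cosheaf property of $E$ (Proposition~\ref{proposition:EIsAWeissCosheaf}) with respect to $\mathcal{V}$: the image of $\bigoplus_{V \in \mathcal{V}} E(V) \to E(W)$ is b-dense, so $e = \lim_n e^{(n)}$ in $E(W)$, where each $e^{(n)} = \sum_{k \in F_n} \iota(e^{(n)}_k)$ is a finite sum with $e^{(n)}_k \in E(W \cap U^{(n)}_k)$ and $z_0 \in U^{(n)}_k \in \mathcal{U}$. Because $M^{d,W}_X(r \otimes \cdot)$ is bounded linear and the extension maps are absorbed into the multiplication by associativity, it follows that
\[
  x = \lim_n \sum_{k \in F_n} M^{d,\, W \cap U^{(n)}_k}_X(r \otimes e^{(n)}_k)\displayperiod
\]
For the second step, fix $n$. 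For every $k \in F_n$, the set $d \cap U^{(n)}_k$ is an open neighborhood of $z_0$, so contains $B_{\delta_{n,k}}(z_0)$ for some $\delta_{n,k} > 0$; letting $\delta_n = \min_{k \in F_n} \delta_{n,k} > 0$, Lemma~\ref{lemma:RelationsOnSmallerConcentricDiscAreDenseInRelationsOnDisc} gives a sequence $r^{(n,m)} \in R(B_{\delta_n}(z_0))$ with $i(r^{(n,m)}) \to r$ in $E(d)$.

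Each summand $M^{B_{\delta_n}(z_0),\, W \cap U^{(n)}_k}_X(r^{(n,m)} \otimes e^{(n)}_k)$ then has both its disc $B_{\delta_n}(z_0)$ and its open set $W \cap U^{(n)}_k$ contained in the single element $U^{(n)}_k \in \mathcal{U}$, so lies in $R^{disc}_\mathcal{U}(X)$. Letting $m \to \infty$ using boundedness of the multiplication in the first argument places the finite sum $\sum_k M^{d, W \cap U^{(n)}_k}_X(r \otimes e^{(n)}_k)$ in the b-closure $\overline{R^{disc}_\mathcal{U}(X)}$, and letting $n \to \infty$ preserves this, since b-closures are themselves b-closed. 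The only real obstacle is the bookkeeping of the two layers of approximation and verifying that the Weiss cover $\mathcal{V}$ can be chosen to contain $z_0$ in each member, which is precisely what makes the concentric-disc lemma applicable; everything else is the formal fact that bounded linear maps preserve b-convergence.
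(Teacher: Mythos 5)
Your proof is correct and follows essentially the same route as the paper's: the same reduction to generators $M^{d,W}_X(r\otimes e)$, the same key observation that the sets $W\cap U$ with $z_0\in U\in\mathcal{U}$ form a Weiss cover of $W$, the same appeal to Lemma~\ref{lemma:RelationsOnSmallerConcentricDiscAreDenseInRelationsOnDisc} for a concentric disc $B_\delta(z_0)\subseteq d\cap U$, and the same use of boundedness of the multiplication in the first slot to conclude. Two small points: the Weiss cosheaf property actually gives an exact finite decomposition $e=\sum_k e_k$ (the map $\bigoplus_V E(V)\to E(W)$ is surjective, not merely b-dense), so your outer limit over $n$ is unnecessary; and since the b-closure is the closure in the topology of sequentially closed sets and need not coincide with the sequential closure, the last step should be phrased as ``the bounded map $M^{d,W}_X(\blank\otimes e_k)$ is continuous for the b-topology and hence carries $R(d)=\overline{i(R(B_{\delta}(z_0)))}$ into $\overline{R^{disc}_\mathcal{U}(X)}$'' rather than by extracting an approximating sequence $r^{(n,m)}$.
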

\begin{proof}
  By definition, the vector space~$R^{disc}(X)$ is generated by elements of the form~$M^{d,W}_X(f\otimes g)$ for~$f\in R(d)$ and~$g\in E(W)$ for some disc~$d \subseteq X$ and~$W\subseteq X$ open and disjoint from~$d$.   
  Let~$z_0$ be the center of~$d$.
  The set
\[
\mathcal{W} = \{ V \subseteq W \text{ open} \mid \exists U \in \mathcal{U} : U \cap W = V, z_0 \in U \}
\]
is a Weiss cover of~$W$ as we now check.
Let~$S\subseteq W$ be finite.
Since~$S \cup \{z_0\}$ is a finite subset of~$X$, there is a~$U\in \mathcal{U}$ s.\,t.~$S\cup \{z_0\} \subseteq U$.
Thus
\[
  V:= U \cap W \supseteq S  \displayperiod
\]
By the definition of the subspace topology,~$V$ is an open subset of~$W$. This concludes the proof that~$\mathcal{W}$ is a Weiss cover of~$W$.

Let~$f\otimes g \in R(d) \otimes E(W)$.
Since~$\mathcal{W}$ is a Weiss cover of~$W$ and~$E$ is a Weiss cosheaf, there are~$W_1,\ldots W_k \in \mathcal{W}$ and~$g_i \in \im E(W_i\hookrightarrow W)$ for~$i=1,\ldots, k$ s.\,t.
\[
g = \sum^k_{i=1} g_i\displaycomma
\]
and thus
\[
f \otimes g = \sum^k_{i=1} f\otimes g_i \displayperiod
\]
It suffices to consider each summand individually because~$\overline{R^{disc}_\mathcal{U}(X)}$ is closed under addition because it is a sub vector space as the b-closure of a sub vector space, see~\cite[2:12 Proposition 1]{HogbeNlend}. 
We consider each of these summands individually and may therefore restrict attention to the case that there is a~$U\in \mathcal{U}$ s.\,t.~$g \in \im E(U \cap W \hookrightarrow W)$ and~$z_0 \in U$. 
Our goal is to prove that~$M^{d,W}_X(f \otimes g) \in \overline{R^{disc}_\mathcal{U}(X)}$. 
There is a~$\delta>0$ s.\,t.~$B_\delta(z_0) \subseteq d \cap U$ because both~$U$ and~$d$ are open and contain~$z_0$.
Lemma~\ref{lemma:RelationsOnSmallerConcentricDiscAreDenseInRelationsOnDisc} says that~$R(d)= \overline{i(R(B_\delta(z_0)))}$ where the b-closure is taken in~$E(d)$ and~$i$ is the map from~$E(B_\delta(z_0))$ to~$E(d)$ induced by the inclusion of~$B_\delta(z_0)$ into~$d$.
Thus~$f\otimes g \in \overline{i(R(B_\delta(z_0)))} \otimes E(W)$.
The claim follows because the map~$M^{d,W}_X(\blank \otimes g) : E(d) \rightarrow E(X)$ is bounded and hence continuous, so
\begin{align*}
  M^{d,W}_X(f \otimes g) \in &M^{d,W}_X(\overline{i(R(B_\delta(z_0)))} \otimes g)\\
  &\subseteq \overline{M^{d,W}_X(i(R(B_\delta(z_0))) \otimes g)} \subseteq \overline{R^{disc}_\mathcal{U}(X)}\displayperiod
\end{align*}
\end{proof}
\begin{theorem}\label{theorem:FIsAWeissCosheaf}
  The precosheaf~$F$ is a Weiss cosheaf of complete bornological vector spaces. 
\end{theorem}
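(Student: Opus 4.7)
The plan is to deduce Weiss codescent for $F$ from Weiss codescent for $E$ (Proposition~\ref{proposition:EIsAWeissCosheaf}), the cokernel description of $F$ in Proposition~\ref{proposition:FAsCokernelOfEByS}, and the key approximation result Lemma~\ref{lemma:RelationsSubordinateToCover}. Let $X \subseteq \C$ be open and $\mathcal{U}$ a Weiss cover of $X$; I would work in the equivalent formulation in terms of colimits, showing that the natural map $\colim_{U\in \mathcal{U}} F(U) \to F(X)$ is an isomorphism in $\CBVS$.

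First I would form the diagram of cokernels. By Proposition~\ref{proposition:FAsCokernelOfEByS}, for every $U \in \mathcal{U}$ the quotient $E(U) \to F(U)$ is the cokernel of the multiplication map $S^{disc}(U) \to E(U)$. Since colimits commute with colimits, $\colim_{U\in\mathcal{U}} F(U)$ is the cokernel in $\CBVS$ of $\colim_{U\in\mathcal{U}} S^{disc}(U) \to \colim_{U\in\mathcal{U}} E(U)$. By Proposition~\ref{proposition:EIsAWeissCosheaf}, the right hand side is identified with $E(X)$, and under this identification the image of $\colim_{U\in\mathcal{U}} S^{disc}(U)$ in $E(X)$ is precisely the subspace $R^{disc}_\mathcal{U}(X)$ of relations on discs subordinate to the cover. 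Recalling that cokernels in $\CBVS$ are computed by quotienting by the b-closure of the image, this gives
\[
\colim_{U \in \mathcal{U}} F(U) \;\cong\; E(X)/\overline{R^{disc}_\mathcal{U}(X)}.
\]

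It then remains to identify this with $F(X) = E(X)/\overline{R^{disc}(X)}$. One inclusion is tautological: $R^{disc}_\mathcal{U}(X) \subseteq R^{disc}(X)$ by definition, hence $\overline{R^{disc}_\mathcal{U}(X)} \subseteq \overline{R^{disc}(X)}$. The reverse inclusion is exactly the content of Lemma~\ref{lemma:RelationsSubordinateToCover}, which gives $R^{disc}(X) \subseteq \overline{R^{disc}_\mathcal{U}(X)}$ and hence $\overline{R^{disc}(X)} \subseteq \overline{R^{disc}_\mathcal{U}(X)}$. Combining these yields the equality $\overline{R^{disc}_\mathcal{U}(X)} = \overline{R^{disc}(X)}$, and therefore $\colim_{U\in\mathcal{U}} F(U) \cong F(X)$, as required.

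The main conceptual obstacle was already dispatched in Lemma~\ref{lemma:RelationsSubordinateToCover}: the passage from relations supported on arbitrary disc/open pairs in $X$ to those supported inside a single cover element of $\mathcal{U}$, accomplished via holomorphic scaling and the bornological identity theorem. Once that lemma is in hand, the present theorem is essentially a formal consequence of Weiss codescent for $E$ together with the commutation of colimits with cokernels in $\CBVS$; the only care needed is to remember that cokernels in $\CBVS$ involve a b-closure, which is precisely why we need the closure version of Lemma~\ref{lemma:RelationsSubordinateToCover} rather than a strict equality of subspaces.
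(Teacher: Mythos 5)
Your proposal is correct and follows essentially the same route as the paper: reduce to the cokernel description of $F(U)$, use that colimits commute with cokernels and that $E$ is a Weiss cosheaf to identify $\colim_{U\in\mathcal{U}}F(U)$ with $E(X)/\overline{R^{disc}_{\mathcal{U}}(X)}$, and then conclude via the two inclusions $R^{disc}_{\mathcal{U}}(X)\subseteq R^{disc}(X)$ and Lemma~\ref{lemma:RelationsSubordinateToCover}. The only cosmetic difference is that you run the colimit over $S^{disc}$ via Proposition~\ref{proposition:FAsCokernelOfEByS} while the paper uses the inclusion of $R^{disc}(U)$ directly; since cokernels in $\CBVS$ depend only on the b-closure of the image, the two are interchangeable.
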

\begin{proof}
  Let~$X$ be an open subset of~$\C$ and let~$\mathcal{U}$ be a Weiss cover of~$X$ closed under taking pairwise intersections.
  Our goal is to show that the right vertical map in the following commutative diagram, whose maps are explained below, is an isomorphism.
  \begin{ctikzcd}
    \colim\limits_{U\in \mathcal{U}} R^{disc}(U) \rar{i_\mathcal{U}} \dar{\gamma_{R}} & \colim\limits_{U \in \mathcal{U}}E(U)  \rar{q_\mathcal{U}} \dar{\gamma_{E}} & \colim\limits_{U \in \mathcal{U}}F(U) \dar{\gamma_{F}}\\
     R^{disc}(X) \rar{i_X} & E(X)  \rar{q_X} & F(X) 
  \end{ctikzcd}
  The colimits are taken over~$\mathcal{U}$. 
  The vertical maps are induced by inclusions between subsets of~$\C$.
  Let
  \[
    q_X: E(X) \rightarrow E(X)/\overline{R^{disc}(X)} = F(X)
  \]
  be the quotient map and let
  \[
    q_\mathcal{U} : \colim_{U \in \mathcal{U}}E(U) \longrightarrow \colim_{U \in \mathcal{U}}F(U)
  \]
  be the map induced by the quotient maps~$q_U$ for~$U \in \mathcal{U}$ instead of~$X$.
  In the lower row, the map~$i_X$ is the inclusion, and the second map~$q_X$ is its cokernel in the category of complete bornological  spaces. 
  The upper row is the colimit of the analogue of the lower row for~$U\in \mathcal{U}$ instead of~$X$.
  In the upper row, it also holds that the second map is the cokernel of the first, because colimits commute with each other.

  The map~$\gamma_E$ is an isomorphism because~$E$ is a cosheaf by Proposition~\ref{proposition:EIsAWeissCosheaf}.
  Hence~$\gamma_F$ is an isomorphism if~$\overline{\im i_X} = \overline{\gamma_E (\im (i_\mathcal{U}))}$ where the b-closure is taken in~$E(X)$, because cokernels in~$\CBVS$ are computed by modding out the b-closure of the image. 
  Note that~$\im i_X = R^{disc}(X)$ by definition. On the other side,
  \[
    \gamma_E (\im (i_\mathcal{U})) = \im \left( \bigoplus_{d,W} R(d) \otimes E(W)  \longrightarrow E(X) \right) = R^{disc}_\mathcal{U}(X)\displaycomma
  \]
  where~$d\subseteq X$ is a disc and~$W\subseteq X$ is open and disjoint from~$d$ such that there exists a~$U \in \mathcal{U}$ s.\,t.~$d$ and~$W$ are both subsets of~$U$, and
  the last equality is the definition of~$R^{disc}_\mathcal{U}(X)$.
  Therefore, it remains to show~$\overline{R^{disc}(X)} = \overline{R^{disc}_\mathcal{U}(X)}$.
  The inclusion~``$\supseteq$'' follows from~$R^{disc}(X) \supseteq R^{disc}_\mathcal{U}(X)$ which is consequence of the definitions. 
  The inclusion~``$\subseteq$'' follows from Lemma~\ref{lemma:RelationsSubordinateToCover}.
\end{proof}
\begin{theorem}\label{theorem:MoreDetailedMainTheorem}
  If~$\V$ is a geometric vertex algebra, then~$\mathbf{F}\V$ is a holomorphic factorization algebra with meromorphic OPE and discrete weight spaces whose associated geometric vertex algebra~$\mathbf{V}\mathbf{F}\V$ is isomorphic to~$\V$. 
\end{theorem}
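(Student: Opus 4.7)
The plan is to assemble the main theorem from the various propositions established throughout Sections~\ref{section:MeromorphicPrefactorizationAlgebras} and~\ref{section:FactorizationAlgebras}; at this point essentially no new work is required, only a careful bookkeeping of which statement supplies which piece of the conclusion. I will walk through the five adjectives and the isomorphism in sequence.

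First, I will check that $\mathbf{F}\V$ is a holomorphic affine-linearly invariant prefactorization algebra on $\C$ with values in $\CBVS$. The prefactorization algebra structure is Proposition~\ref{proposition:FIsAPrefactorizationAlgebra}, its affine-linear invariance (with the maps $\sigma^U_g$ inherited from $\mathbf{E}\V$) is Proposition~\ref{proposition:FIsInvariant}, and holomorphicity in the sense of Definition~\ref{definition:HolomorphicallyInvariantPrecosheaves} is the content of Proposition~\ref{proposition:EandFareHolomorphic}. Next, for the factorization algebra axioms beyond prefactorization, I will cite Proposition~\ref{proposition:FIsMultiplicative} for multiplicativity and Theorem~\ref{theorem:FIsAWeissCosheaf} for the Weiss cosheaf property; together with the prefactorization algebra structure these give that $\mathbf{F}\V$ is a factorization algebra.

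For the remaining assertions I will invoke Section~\ref{subsection:BackToGeometricVertexAlgebras}. Proposition~\ref{proposition:VFisIdAsGradedBVS} says that the comparison map $I : \V \to \mathbf{V}\mathbf{F}\V$ is an isomorphism of $\Z$-graded complete bornological spaces; since each $\V_k$ is discrete by the definition of a geometric vertex algebra, this transports to show that each $(\mathbf{V}\mathbf{F}\V)_k$ is discrete, i.e.\ $\mathbf{F}\V$ has discrete weight spaces. Proposition~\ref{proposition:VFisIdAsGeometricVertexAlgebras} then provides the square identifying $\mu$ with $\widetilde{\mu}$ under $I$; since $(\V,\mu)$ satisfies the meromorphicity axiom and $I$, $I^{\otimes m}$, $\overline{I}$ are isomorphisms of vector spaces, $(\mathbf{V}\mathbf{F}\V,\widetilde{\mu})$ also satisfies meromorphicity. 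By the definition following Proposition~\ref{proposition:MuFromHolomorphicPrefactorizationAlgebraIsAssociative}, this is exactly the statement that $\mathbf{F}\V$ has meromorphic OPE, and $I$ is then an isomorphism of geometric vertex algebras $\V \cong \mathbf{V}\mathbf{F}\V$.

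There is no real obstacle in this proof, since the theorem is a summary: every component has been proved separately and the only care required is to note that discreteness of the weight spaces of $\mathbf{V}\mathbf{F}\V$ is deduced from the isomorphism with $\V$ (rather than checked directly), and that meromorphicity of the OPE is likewise deduced from meromorphicity of $\V$ via the comparison map. The single bookkeeping subtlety is the order: one must verify discreteness before invoking the definition of meromorphic OPE, since that definition presupposes discrete weight spaces; Proposition~\ref{proposition:VFisIdAsGradedBVS} supplies this, so the chain of implications closes cleanly.
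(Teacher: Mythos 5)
Your proposal is correct and follows essentially the same route as the paper: the theorem is assembled by citing Propositions~\ref{proposition:FIsAPrefactorizationAlgebra}, \ref{proposition:EandFareHolomorphic}, \ref{proposition:VFisIdAsGradedBVS}, \ref{proposition:VFisIdAsGeometricVertexAlgebras}, \ref{proposition:FIsMultiplicative}, and Theorem~\ref{theorem:FIsAWeissCosheaf}, with discreteness of the weight spaces and meromorphicity of the OPE both deduced from the comparison isomorphism $I$. Your remark about verifying discreteness before invoking the definition of meromorphic OPE is a sensible clarification of the same argument.
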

\begin{proof}
  Proposition~\ref{proposition:FIsAPrefactorizationAlgebra} says that~$F =\mathbf{F}\V$ is a prefactorization algebra.
  Proposition~\ref{proposition:EandFareHolomorphic} says that~$F$ is holomorphic. 
  Proposition~\ref{proposition:VFisIdAsGradedBVS} says that~$\mathbf{V}\mathbf{F}\V \cong \V$ as graded bornological vector spaces. In particular,~$F$ has discrete weight spaces.   
  Proposition~\ref{proposition:VFisIdAsGeometricVertexAlgebras} says that this isomorphism respects the multiplication maps, and~$\mathbf{F}\V$ having meromorphic OPE by definition means that~$\mathbf{V}\mathbf{F}\V$ satisfies the meromorphicity axiom of a geometric vertex algebra.  
  Proposition~\ref{proposition:FIsMultiplicative} says that~$F$ is multiplicative, and 
  Theorem~\ref{theorem:FIsAWeissCosheaf} says that~$F$ is a Weiss cosheaf, so~$F$ is a factorization algebra. 
\end{proof}
\bibliographystyle{plain}
\bibliography{va_cg_fact}  
\end{document}